\documentclass[11pt]{amsart}   	
\usepackage{geometry}             
\usepackage[dvipsnames]{xcolor}   		             		
\usepackage{graphicx}				
\usepackage{amssymb,mathrsfs}
\usepackage{amsthm,amsmath,stmaryrd}
\usepackage{tikz}
\usepackage{tikz-cd}
\usepackage{accents,enumerate}
\usepackage[headings]{fullpage}
\usepackage{bm}
\usepackage{mathtools}
\usepackage[all]{xy}
\usepackage{caption}
\usepackage[euler-digits]{eulervm}
\usepackage[normalem]{ulem}

\usepackage{verbatim}
\usepackage{booktabs}
\usepackage{float}

\tikzset{
	commutative diagrams/.cd, 
	arrow style=tikz, 
	diagrams={>=stealth}
}

\newtheorem{innercustomthm}{Theorem}
\newenvironment{customthm}[1]
{\renewcommand\theinnercustomthm{#1}\innercustomthm}
{\endinnercustomthm}

\newtheorem{innercustomcor}{Corollary}
\newenvironment{customcor}[1]
{\renewcommand\theinnercustomcor{#1}\innercustomcor}
{\endinnercustomcor}

\makeatletter
\def\@tocline#1#2#3#4#5#6#7{\relax
	\ifnum #1>\c@tocdepth 
	\else
	\par \addpenalty\@secpenalty\addvspace{#2}%
	\begingroup \hyphenpenalty\@M
	\@ifempty{#4}{%
		\@tempdima\csname r@tocindent\number#1\endcsname\relax
	}{%
		\@tempdima#4\relax
	}%
	\parindent\z@ \leftskip#3\relax \advance\leftskip\@tempdima\relax
	\rightskip\@pnumwidth plus4em \parfillskip-\@pnumwidth
	#5\leavevmode\hskip-\@tempdima
	\ifcase #1
	\or\or \hskip 1em \or \hskip 2em \else \hskip 3em \fi%
	#6\nobreak\relax
	\dotfill\hbox to\@pnumwidth{\@tocpagenum{#7}}\par
	\nobreak
	\endgroup
	\fi}
\makeatother

\usetikzlibrary{calc}
\usetikzlibrary{fadings}
\usetikzlibrary{decorations.pathmorphing}
\usetikzlibrary{decorations.pathreplacing}

\newcounter{marginnote}
\DeclareMathAlphabet{\mathpzc}{OT1}{pzc}{m}{it}

\usepackage[backref=page]{hyperref}
\hypersetup{
	colorlinks   = true,          
	urlcolor     = violet,          
	linkcolor    = blue,          
	citecolor   = violet             
}

\newtheorem{theorem}{Theorem}[subsection]
\newtheorem{corollary}[theorem]{Corollary}
\newtheorem{lemma}[theorem]{Lemma}
\newtheorem{proposition}[theorem]{Proposition}

\newtheorem{quasi-theorem}[theorem]{Quasi-Theorem}

\theoremstyle{definition}
\newtheorem{definition}[theorem]{Definition}
\newtheorem{warning}[theorem]{$\lightning$ Warning}
\newtheorem{remark}[theorem]{Remark}

\newtheorem{example}[theorem]{Example}
\newtheorem{blank remark}[theorem]{}

\newtheorem{not1}[theorem]{Notation}

\newcommand{\RR} {{\mathbb R}}		
\newcommand{\ZZ} {{\mathbb Z}}		

\def\setminus{\smallsetminus}

\newcommand{\Hom}{\operatorname{Hom}}

\DeclareMathOperator{\Aut}{Aut}

\DeclareMathOperator{\spec}{Spec}

\newcommand{\cal}{\mathcal}

\def\cA{{\cal A}}

\def\cD{{\cal D}}
\def\cE{{\cal E}}
\def\cF{{\cal F}}

\def\cM{{\cal M}}

\def\trop{\mathrm{trop}}

\def\vir{^{\mathrm{vir}}}
\def\ev{{\mathrm{ev}}}

\definecolor{lightgreen}{HTML}{E8F9E2}
\definecolor{lightblue}{HTML}{ADD8E6}
\definecolor{lightcyan}{HTML}{E0FFFF}
\definecolor{lightred}{HTML}{ffcccb}
\definecolor{lightpink}{HTML}{ffe6e6}
\definecolor{coralpink}{HTML}{F88379}
\definecolor{forestgreen}{HTML}{228B22}
\definecolor{skyblue}{HTML}{448EE4}
\definecolor{skyblue}{HTML}{448EE4}
\definecolor{darkgreen}{HTML}{006400}
\definecolor{darkblue}{HTML}{00008B}
\definecolor{darkorchid}{HTML}{9932CC}

\makeatletter
\def\blfootnote{\xdef\@thefnmark{}\@footnotetext}
\makeatother

\usepackage{setspace}
\usepackage{array}
\usepackage{ifthen}
\usepackage{subcaption}

\usetikzlibrary{perspective}

\usetikzlibrary{plotmarks}

\graphicspath{ {tropgraphs/} }

\title{Elliptic curve counting in toric threefolds: virtual, enumerative, and tropical}
\author{Sae Koyama}
\date{}

\newcommand{\Ov}{\operatorname{Ov}}

\newcommand{\DR}{\operatorname{DR}}
\newcommand{\TC}{\operatorname{TC}}
\newcommand{\Ev}{\operatorname{Ev}}
\newcommand{\vdim}{\operatorname{vdim}}
\newcommand{\Mspace}{\mathcal{M}_{\Lambda}}
\newcommand{\Wspace}{\mathcal{W}_{\Lambda}}
\newcommand{\Winv}{W_{X,\beta}(\underline{\varphi})}

\newcommand{\Linv}{L_{X,\beta}(\underline{\varphi})}
\newcommand{\rig}{{\operatorname{rig}}}
\newcommand{\rub}{{\operatorname{rub}}}

\begin{document}
	
	\maketitle
	
	\begin{abstract}
		
		We study the enumerative geometry of elliptic curves in toric threefolds. We consider enumerative integer invariants, called well-spaced counts, which can be studied using well-spaced genus-one tropical curves in $\mathbb{R}^3$. By comparing this with the logarithmic degeneration formula, we obtain an explicit relationship between logarithmic virtual invariants and these geometric invariants. The result is a logarithmic analogue of a formula of Getzler--Pandharipande for elliptic curves in $\mathbb{P}^3$. As an application, we show that the virtual logarithmic invariants for $\mathbb{P}^3$ with respect to its toric boundary are strictly less than the ordinary Gromov–Witten invariants once the degree is sufficiently large. Several examples are included.

	\end{abstract}
	
	\tableofcontents

	\section{Introduction}
	
	Tropical methods have been a powerful tool in a variety of enumerative contexts. Planar curve counting is solved in all genus \cite{BM08, CMR23, M05}, and genus $0$ counts are known in all dimension \cite{NS06}. One generalization of these results is the celebrated development of \emph{logarithmic Gromov--Witten invariants} \cite{AC14, GS13}. Degeneration techniques have been used to express these invariants as sums indexed by tropical curves \cite{ACGS20}. However, logarithmic Gromov--Witten invariants are not enumerative. On the enumerative side, moving beyond genus $0$ or low dimension requires an understanding of \emph{which} tropical curves contribute to an enumerative count, and \emph{what} the contributions are. The former is solved by Speyer's \emph{well-spacedness} condition \cite{Sp07}, while the latter is solved in work with Cela \cite{CK26}, where we gave a complete answer for genus one curves in toric varieties of any dimension, finding explicit combinatorial contributions for the tropical curves that appear. 
	
	In this work, we compare the enumerative invariants to the logarithmic Gromov--Witten invariants, giving an explicit comparison and showcasing a delicate interplay between logarithmic Gromov--Witten invariants, higher double ramification cycles, tropical curves, and Speyer's well-spacedness condition \cite{ACGS20, HMPPS25, RK24, RSW19II}.

	Let $X$ be a smooth toric threefold, and $\beta$ a curve class on $X$. In this paper, we consider:
	\begin{center}
		(Q) How many genus $1$ curves of class $\beta$ in $X$ pass through fixed curve and point conditions?    
	\end{center}
	
	We are interested in both the enumerative count and the virtual count. 
	
	\subsection{A logarithmic Getzler--Pandharipande relation}

	The Gromov--Witten theory of curves in $\mathbb{P}^3$ was studied by Getzler and Pandharipande, and they found a precise relation between the virtual and enumerative counts \cite[Theorem 6.1]{Ge97}. The present paper may be regarded as a logarithmic analogue of this result. 
	
	Let the curves defining the first $a$ conditions be dual to cohomology classes $\varphi_1, \ldots, \varphi_a$ in $H^*(X)$. Let cohomology classes $\varphi_{a+1}, \ldots, \varphi_{a+b}$ be dual to points, and denote by $\underline{\varphi}$ the collection $(\varphi_1, \ldots, \varphi_{a+b})$. An integer invariant $\Winv$ is defined in \cite{CK26}, which counts (when such a count exists) genus 1 curves of class $\beta$ passing through the fixed curve and point conditions in $X$, where we have additionally labelled the intersection points with the boundary. 
	
	Our main result is a formula relating $\Winv$ to the corresponding logarithmic Gromov--Witten invariant, denoted $\Linv$. The space of logarithmic stable maps is naturally stratified, with components indexed by \emph{tropical types} \cite{ACGS20} (see Section \ref{sec:tropexpan} for definitions). As a consequence, the logarithmic invariants decompose into a sum indexed by certain tropical types meeting tropical point and line conditions. The theory of expanded degenerations then allows the contributions of each tropical type to be reduced to vertex contributions \cite{R19}. More delicate contributions come from superabundant tropical curves, whose deformation space is larger than expected.

	It is shown in \cite{CK26} that there is a tropical correspondence theorem for $\Winv$, which expresses these invariants as a sum over ``well-spaced'' tropical curves. In this work, we the use the geometry of the relevant moduli spaces to obtain a comparison between the enumerative and logarithmic Gromov--Witten invariants.  This is summarised in Table \ref{tab:summarycontributions}. 
	
	\begin{table}[h]
		\centering
		\begin{tabular}{m{7em}||m{6em}|m{6em}|m{5em}|m{13em}} 
			\toprule
			&Well-spaced, rigid & $e(\gamma) = 1$, not rigid & $e(\gamma) = 2$, not rigid & $e(\gamma) =3$ \\
			\midrule
			$W$ contribution & $M_\gamma \cdot w_\gamma$ & $M_\gamma \cdot w_\gamma$ & $M_{\gamma} \cdot w_\gamma$ & No curves \\
			$L$ contribution & $M_\gamma \cdot w_\gamma$ & 0 & 0 & $-\frac{1}{24} |u_1 \wedge u_2|^2 M_\gamma$  \\
			\bottomrule	
		\end{tabular}
		\caption{Contributions to $\Winv$ and $\Linv$ for each tropical type $\gamma$. }
		\label{tab:summarycontributions}
	\end{table}
	
	The integer $e(\gamma)$ is the excess number of dimensions of the deformation space. The multiplicities $M_\gamma$ and $w_\gamma$ are defined in Sections \ref{sec:decomposition} and Section \ref{sec:winv} respectively. The multiplicity $M_\gamma$ is the \emph{total tropical multiplicity} associated to tropical type $\gamma$, which can be expressed as the determinant of an appropriate tropical evaluation morphism. The \emph{weight} of a tropical type $\gamma$ is the number of logarithmic curves which have type $\gamma$. In the $e(\gamma) = 3$ column, we show that each tropical curve that appear is trivalent and has a single genus $1$ vertex $v$, and the vectors $u_1, u_2$  are the direction vectors of any two half-edges adjacent to $v$. 

	Every genus $0$ tropical type with a choice of vertex gives a genus $1$ tropical type with $e(\gamma) = 3$. These contribute to the logarithmic Gromov--Witten count but not the well-spaced count. Conversely, there are tropical types that contribute to the well-spaced count but not the logarithmic Gromov--Witten count. 
	
	Consequently, we obtain the following result. 
	
	\begin{customthm}{A}\label{thmA}
		Let $X$ be a toric threefold, and $\beta$ a curve class on $X$. Fix cohomology classes $\varphi_1, \ldots, \varphi_{a+1} \in H^4(X)$, and $\varphi_a, \ldots, \varphi_{a+b} \in H^6(X)$. Then we have
		$$\Winv = \Linv + E_1 + \frac{1}{24} E_0$$
		where $E_1$ is the number of non-rigid, well-spaced tropical curves counted with multiplicity $M_\gamma\cdot w_{\gamma}$, and $E_0$ is given as the following weighted sum over genus $0$ tropical curves going through the same conditions:
		$$E_0 = \sum_{\gamma}\sum_{v \in V(\gamma)} |u_1 \wedge u_2|^2M_\gamma$$
		where $u_1, u_2$ are the directions of any two edges leaving the vertex $v$. 
	\end{customthm}
	
	 The logarithmic Gromov--Witten invariants can be computed using double ramification cycle and degeneration techniques \cite{ACGS20, HMPPS25, MR24}. The correction terms can be accessed as follows:
	 \begin{enumerate}
	 	\item The correction term $E_0$ is a weighted sum over genus 0 curves. When $X = \mathbb{P}^3$, we can modify the floor diagram methods of Brugall{\'e}--Mikhalkin \cite{BM08} to give a recursive calculation method for them. This is done in Section \ref{sec:floor}. 
	 	\item The correction term $E_1$ has geometric meaning as curves whose circuit components map into a plane.
	 \end{enumerate}
	 This gives new ways of accessing $\Winv$. We give examples of explicit computations and outline general methods in Section \ref{sec:examples}. 
	
	\subsection{Ordinary vs. relative Gromov--Witten invariants}
	
	Consider when $X = \mathbb{P}^3$. We can choose our conditions to be $a$ lines and $b$ points (all other conditions can be expressed as linear combinations of these). We require $a+2b = 4d$, and we denote the well-spaced and logarithmic invariants $W_{a,b}$, $L_{a,b}$ respectively. In this case, the count of \emph{smooth} genus 1 curves meeting the conditions is well-defined, and we denote this number by $n_{a,b}$. We have $n_{a,b} = (d!)^4 W_{a,b}$ (see Remark \ref{rem: P3 enumerative}).

	We compare the ordinary Gromov--Witten theory of $\mathbb{P}^3$ to the logarithmic Gromov--Witten theory of the pair $(\mathbb{P}^3, \partial \mathbb{P}^3)$, where $\partial \mathbb{P}^3$ is the toric boundary. For the latter, we prescribe transverse tangency to the four hyperplanes. In genus $0$, the ordinary and logarithmic Gromov--Witten invariants are both equal to the enumerative count. In genus $1$, the associated moduli spaces share a birational main component, but have different boundary structure. A relation between the two can be found by combining Theorem \ref{thmA} with a formula due to Getzler--Pandharipande \cite{Ge97}. Denoting the ordinary Gromov--Witten invariants $GW^0_{a,b}, GW^1_{a,b}$ for genus 0 and 1 respectively, they state that
	$$n_{a,b} = GW^1_{a,b} + \frac{2d-1}{12}GW^0_{a,b}.$$
	
	The second term of the right hand side can be rewritten as a
	$$\frac{1}{24}\sum_{\gamma}\sum_{v \in V(\gamma)} M_\gamma,$$ 
	where the sum is over genus $0$ tropical types $\gamma$, and $V(\gamma)$ is the vertex set of the underlying graph.
	Hence, we obtain the following. 
	
	\begin{customcor}{B} \label{cor:logandordinary}
		We have an inequality
		$$(d!)^4 GW_{a,b}^1  \geq L_{a,b},$$
		which is strict for $a,b$ large. 
	\end{customcor}

	Values of $a,b$ for which the inequality is strict is given in Section \ref{sec:examples}. In particular, the inequality is strict for degree $d \geq 3$ for all line conditions, or $d \geq 4$ for all point conditions. We remark that $GW_{a,b}^1$ and $L_{a,b}$ can both be negative rational numbers. 

	\begin{remark}
		Mandel and Ruddat show in \cite[Theorem 5.1]{MR20} and \cite[Corollary 5.6]{MR20}, that the ordinary and logarithmic Gromov--Witten invariants in genus 0 are equal after accounting for factor coming from permuting the boundary markings, so the discrepancy observed in Corollary \ref{cor:logandordinary} is a genuine genus 1 phenomenon. In genus 0, they also show that for $X$ a product of projective spaces, if  $D \cdot \beta \leq 1$ for every boundary divisor $D$ of $X$, then the logarithmic and orindary Gromov--Witten invariants agree \cite[Theorem 5.4]{MR20}. The inequality of Corollary \ref{cor:logandordinary} being strict only for degree $d \geq 3$ is evidence that this holds more generally. 
	\end{remark}

	\subsection{Well-spacedness} 
	The enumerative invariants are studied using well-spaced curves. A genus $1$ tropical curve in $\mathbb{R}^3$ is \emph{well-spaced} if for every plane $H$ containing the circuit, we have
	\begin{enumerate}[(i)]
		\item the neighbourhood of the circuit is not contained in $H$; or
		\item at the minimum distance from the circuit to where the curve leaves the plane $H$, the curve leaves $H$ via at least three edges\footnote{Note for a trivalent curve, this is the condition that the minimum distance to where the tropical curve leaves the plane is met twice.}. 
	\end{enumerate} 
	
		\begin{figure}[h]
	\centering
	\begin{tikzpicture}[3d view]
		\draw (0, 0, 0) -- (10, 0, 0) -- (10,10,0) -- (0,10,0) -- cycle;
		
		\draw (3,3,0) -- (5,3,0) -- (7,5,0)--(7,7,0)--(5,7,0)--(3,5,0)-- cycle;
		\draw[thick] (3,3,0) -- (1,1,0);
		\draw[thick] (5,3,0) -- (5,1,0);
		\draw[thick] (7,5,0) -- (9,5,0);
		\draw[thick] (7,7,0) -- (9,9,0);
		\draw[thick] (5,7,0) -- (5,9,0);
		\draw[thick] (3,5,0) -- (1,5,0);
		\draw (0,0,-1) -- (1,1,0) -- (0,0,1);
		\draw (5,0,-1) -- (5,1,0) -- (5,0,1);
		\draw (10,5,-1) -- (9,5,0) -- (10,5,1);
		\draw (10,10,-1) -- (9,9,0) -- (10,10,1);
		\draw (5,10,-1) -- (5,9,0) -- (5,10,1);
		\draw (0,5,-1) -- (1,5,0) -- (0,5,1);
		
		\node at (2.5,2,0) {$\ell_1$};
		\node at (5.5,2,0) {$\ell_2$};
		\node at (8,5.7,0) {$\ell_3$};
		\node at (8,8.5,0) {$\ell_4$};
		\node at (5.5,8,0) {$\ell_5$};
		\node at (2,5.7,0) {$\ell_6$};
	\end{tikzpicture}
	\caption{The tropical curve is well-spaced if and only if the minimum of $\ell_1, \ldots, \ell_6$ is achieved twice.}
	\label{fig:well-spacedness}
	\end{figure}
	
	There is a normal crossings compactification of $\cM_{1,n}(\mathbb{P}^n, d)$, the moduli space of maps from smooth $n$-marked genus $1$ curves to $\mathbb{P}^n$, constructed by Vakil and Zinger \cite{VZ08}. It has a modular interpretation as the space of \emph{well-spaced, centrally-aligned} logarithmic stable maps \cite{RSW19}. A similar construction gives $\mathcal{W}_\Lambda(X)$, a logarithmically smooth compactification of the moduli space of genus 1 logarithmic stable maps to $X$ with fixed contact orders, given as the moduli space of \emph{well-spaced, radially-aligned} logarithmic curves \cite{RSW19II}. 
	
	A radially aligned logarithmic curve is well-spaced if and only if its tropicalization is well-spaced. Hence the boundary of $\mathcal{W}_\Lambda(X)$ has strata indexed by well-spaced tropical curves. A key observation for the comparison in Theorem \ref{thmA} is that the moduli space of stable logarithmic curves and well-spaced curves share strata indexed by non-superabundant types. We study the differences for non-superabundant types.

	\subsection{Motivation and prior work}

	Since Mikhalkin's original work on planar rational curve counts \cite{M05}, there has been interest in extending his tropical correspondence theorem to the higher genus and higher dimensional case. The key issue is superabundancy and non-realizability -- that is, the existence of tropical curves which do not arise as the tropicalization of a family of algebraic curves with smooth general fiber. In the elliptic case, Speyer's well-spacedness condition solves the realizability problem \cite{CFPU, KatLift, R16, Sp07}. Speyer poses the question of whether it is possible to use realizability results to produce combinatorial formulas. 
	
	Several advances in recent years have made this question accessible, including the development of logarithmic methods, and a modular interpretation of the well-spacedness condition \cite{RSW19, RSW19II}. The invariants associated to these spaces coincide with the reduced invariants provided by Vakil--Zinger's desingularization of $\overline{\mathcal{M}}_{1,n}(\mathbb{P}^r,d)$, and the logarithmic perspective provides a way to access these tropically. The tropical correspondence theorem of \cite{CK26} provides a full answer to Speyer's question.

	Logarithmic Gromov--Witten invariants are of independent interest. Considerable progress has been made in their study, including degeneration techniques and tropical gluing formulas \cite{ACGS20, ACGS24, R19}, as well as higher double ramification cycles and their enumerative applications \cite{CMS25, HMPPS25, JPPZ17, RK24}. Here we provide a unification of these ideas in the first non-trivial case, giving concrete methods of calculation.  The Getzler-Pandharipande relation of \cite{Ge97} can be viewed as a ``Vakil--Zinger to Gromov--Witten translation", and Theorem \ref{thmA} is a incarnation of this result for all toric threefolds relative to their toric boundary. We emphasise that Theorem \ref{thmA} is independent of the explicit multiplicities found in \cite{CK26}, and we hope that comparisons between invariants, combined with the known multiplicities, can provide new insights into both theories.

	Related work is the study of relative reduced Gromov--Witten invariants for $(\mathbb{P}^3|H)$ by Battistella--Nabijou--Ranganathan \cite{BND19}. While the tropical objects we consider are more complex, considering the full toric boundary makes the correspondence simpler.

	\subsection{Further work} 
	
	A natural direction of exploration is the study of characteristic numbers, which count curves with the condition of tangency to a general hypersurface. This has been studied tropically in the genus 0 case \cite{BBM14}.
	Some related invariants are the descendent invariants, which are generally not enumerative, but have a concrete relation to characteristic numbers in the planar genus $1$ and $2$ case via \emph{modified} $\psi$ classes \cite{GKP02}. There are tropical correspondences for descendent invariants, where the tropical curves being counted are required to have vertices with valence higher than $3$ \cite{BS19, Bo21, KQU24, MR20}. Refined floor diagrams have been used to study these in the planar case \cite{Br15, DH25}, and we expect that similar analysis of the contributing tropical types may give more geometric meaning to these invariants in higher dimension. We remark that when $\psi$-classes are present, even in genus 0 there is a discrepancy between the ordinary and logarithmic invariants \cite{MR20}. Such discrepancies would likely also contribute in a genus 1 setting.

	The aim of this paper is to create a theoretical framework for calculations, and there is scope for better methods or explicit recursion relations. For toric varieties with good properties, it is predicted that Brugall{\'e}--Mikhalkin's generalized floor diagrams can be modified to achieve this. However, the unexpected deformations of superabundant curves create problems when finding constrains that force any tropical curve meeting them to be ``floor decomposed''. Even when the relevant curves of higher genus can be found using floor diagrams, it is not clear how to obtain appropriate multiplicities in a way that is reasonable to compute. Many tropical proofs of algebraic results have used tropical enumeration techniques in a serious way (for instance, see \cite{DH25} or \cite{KM09}) and an explicit calculation framework for higher genus tropical curves is likely to give access to more powerful algebraic theorems.

	\subsection{Notation and conventions} We work over an algebraically closed field with characteristic $0$. A toric variety $X$ has a fan $\Sigma_X$ in cocharacter lattice $N$, whose dual is the character lattice $M$. For $\sigma$ a cone of $\Sigma_X$, the lattice $N_{\sigma}$ is generated by $\sigma \cap N$.

	\subsection{Outline} Background on tropical curves and expansions are give in Section \ref{sec:tropexpan}, along with necessary modifications for calculations. Background on well-spacedness is given in Section \ref{sec:well-spaced}. The contributions to the curve counting invariants are given in Section \ref{sec:calculations}, providing a proof of Theorem \ref{thmA}. Calculation techniques and examples are given in Section \ref{sec:examples}, also giving a proof of Corollary \ref{cor:logandordinary}.

	\subsection{Acknowledgements} I thank my PhD advisor, Dhruv Ranganathan, for invaluable advice and direction. Many thanks to Alessio Cela, whose conversations and feedback have been essential. Thank you also to everyone in the group for all the helpful advice, and for making this journey joyful.
	
	The author's doctorial programme was funded by St John's College, Cambridge. 
	
	\section{Tropical curves and expansions} \label{sec:tropexpan}
	
	\subsection{Tropical curves} \label{sec:tropical} 

	We review some basic definitions on tropical curves. 
	
	\begin{definition}
		A graph $G$ consists of the following data: 
		\begin{enumerate}
			\item (the set of vertices) a finite non-empty set $ V(G)$ ; 
			\item (the set of half-edges) a finite set $ L(G)$ ; 
			\item (the edges) an involution $\iota: L(G) \rightarrow L(G)$; and 
			\item (adjacency) a partition of $L(G)$ indexed by $V(G)$ - that is  $L(G)_V \subset L(G)$ such that $L(G) = \cup_V L(G)_V$ and $L_V \cap L_W = \emptyset$ for vertices $V \neq W$ . 
		\end{enumerate}
	\end{definition}
	
	A pair of distinct elements of $L$ interchanged by $\iota$ is called a \emph{edge} of the graph. Denote the set of edges by $E(G)$. An element of $L$ mapped to itself under $\iota$ is called a \emph{infinite end}. Denote the set of infinite ends by $L^e(G)$. 
	
	\begin{definition} \label{def:tropcurve}
		A \emph{tropical curve} $\Gamma = (G, g, m, \ell)$ consists of the following data: a finite, connected graph $G$, a genus function $g \rightarrow V(G)$, a bijective \emph{marking function} $m:  \{1, \ldots N\} \rightarrow L^e(G)$ to the set of infinite ends, and a \emph{length function} $\ell:E(G) \rightarrow \RR_{>0}$ from the edge set of $G$.

	\end{definition}
	
	The \emph{genus} of a tropical curve $\Gamma$ is 
	$$g(\Gamma) = h_1(G) + \sum_{v \in V} g(v).$$
	The curve $\Gamma$ can be given the structure of a metric space by giving each edge $e$ of $G$ the positive real edge-length $\ell(e)$ to form $G^{\text{met}}$, then defining
	
	$$\Gamma^{\text{met}}=(G^{\text{met}} \amalg \coprod_{i \in \{1, \ldots N\} }\RR_{\geq0}) / \sim$$
	where the equivalence relation glues each $0$ to the vertex which the infinite edge $m(i)$ is adjacent to. The edges of infinite length corresponding to markings are called \emph{legs} or \emph{infinite ends}. We will often refer to $\Gamma^{\text{met}}$ as $\Gamma$.

	\begin{definition}
		A \emph{tropical map from a tropical curve $\Gamma$ to $\RR^r$} is a piecewise linear function $h$ from $\Gamma^{\text{met}}$ to $\RR^r$ with integer slopes, satisfying the following balancing condition: at every point in $\Gamma$, the sum of the directional derivatives of $\varphi$ is 0. 
		
	\end{definition}
	
	\begin{definition}
	A \emph{tropical map from a tropical curve $\Gamma$ to $\Sigma_X$} is a tropical map from $\RR^r$ with the added condition that edges map into cones of $\Sigma_X$. 
	\end{definition}
	
	We also impose a degree condition. Let $\triangle$ be a $r \times N$ matrix whose rows sum to $0$ and let $\triangle_i$ be the vector given by the $i$-th column. We require that the $i$th infinite end has direction vector $\triangle_i$. Call $\triangle$ the \emph{degree} of the tropical map. 
	
	We call $(h, \Gamma)$ a \emph{parametrised tropical curve} of genus $g$ and degree $\triangle$.

	\begin{example} \label{rem:degree1}
		We can take $X = \mathbb{P}^3$, and the degree matrix $\triangle$ to be the \emph{standard degree matrix of degree $d$}, that is the matrix
		$$\begin{pmatrix}
			1 & \cdots & 1 & 0 &  \cdots &0 & \cdots & -1 & \cdots & -1 \\
			0 & \cdots  & 0 & 1 &\cdots & 1 & \cdots & -1 & \cdots & -1 \\
			\vdots & \ddots & \vdots & \vdots& \ddots & \vdots & \ddots & \vdots & \ddots & \vdots  \\
			0 & \cdots & 0 & 0 & \cdots & 0 & \cdots & -1 & \cdots & -1 \\
		\end{pmatrix}$$
		where the columns of the matrix consists of $d$ copies of $e_i$ for each $i$, and $d$ copies of $(-1, \ldots, -1)$. Then we recover Mikhalkin's definition of a tropical curve in $\mathbb{R}^r$. It is conventional to add $n$ columns of $0$s, and the corresponding markings will be used to impose conditions. 
	\end{example}
	
	\begin{remark}
		For a tropical curve $\Gamma$ with underlying graph $G$, we denote by $E(\Gamma)$, $V(\Gamma)$, and $L(\Gamma)$, the edge set, vertex set, and set of half-edges of $G$. 
	\end{remark}
	
	\begin{remark}
		Given a parametized tropical curve $(h, \Gamma)$, each half-edge $\zeta$ in $L(\Gamma)$ has a well-defined edge direction $u_{\zeta}$. 
	\end{remark}

	\begin{definition}
		Fix a lattice $N$. For $u \in N$, the \emph{weight} of $u$ is the maximum non-negative integer $a$ such that 
		$u = au'$
		for $u' \in N$. 
	\end{definition}

	The \emph{combinatorial type} of a tropical curve consists of the data obtained by dropping the edge lengths. That is
	\begin{enumerate}[(i)]
		\item the finite graph, genus, and markings $(G,g,m)$ underlying $\Gamma$; 
		\item for each half-edge $\zeta$ of $G$, the edge direction $u_{\zeta} \in \RR^r$; and 
		\item for each vertex and edge of $\Gamma$, the cone in $\Sigma_X$ which contains it. 
	\end{enumerate}
	We often denote a combinatorial type by $\gamma$.

	The moduli space of tropical maps is a cone complex where each cone is labelled by combinatorial type. The \emph{overvalence} of a graph $G$ is 
	$$\Ov (G) = \sum_{v \in G, \operatorname{val}(v) \geq 3} \operatorname{val}(v) - 3.$$
	The expected dimension of a cone in this moduli space corresponding to combinatorial $\gamma$ is $$N+(r-3)(1-g)- \Ov(G),$$ 
	where $G$ is the underlying graph of $\gamma$. However there are cones in this moduli space that have larger than expected dimension. 
	
	\begin{definition}
		A parametrised tropical curve $(h,\Gamma)$ is \emph{superabundant} if its deformation space has dimension greater than 
		$$N+(r-3)(1-g) - \Ov(G),$$ 
		where $G$ is the underlying graph of $\Gamma$. Otherwise it is called \emph{non-superabundant}. 
		
	\end{definition}

	The excess dimension of the deformation space is denoted $e(h)$. For genus $1$ curves, the only type of superabundancy is \emph{planar superabundance}, where the cycle $h(\Gamma_0)$ is contained in a hyperplane of $\RR^r$. For $r =3$, the tropical curve $(h, \Gamma)$ has excess dimension $1$ if it is contained in a plane but not a line, excess dimension $2$ if it is contained in a line but not contracted, and excess dimension $3$ if the whole cycle is contracted.

	\subsection{Tropicalization of logarithmic spaces} \label{sec:trop}
	
	We refer the reader to \cite[Section 2]{ACGS20}, \cite[Section 1]{MR24}, \cite[Section 4]{U17} for an introduction to generalized cone complexes and Artin fans. We summarise here. 
	
	To a logarithmic scheme $X$, with logarithmic structure in Zariski topology, we can associate to it a generalized cone complex $\Sigma_X$ (also denoted $\Sigma(X)$). This is built out of cones 
	$$\sigma_x = \Hom(\overline{\cM}_{X, x}, \RR_{\geq 0}) \subset (N_x)_{\RR} = \Hom(\overline{\cM}_{X, x}, \ZZ)_{\RR}$$
	for $x \in X$, and glued by the dual of generalization maps $\overline{\cM}_{X, x} \rightarrow \overline{\cM}_{X, x'}$. The association is functorial: a logarithmic morphism $f: X \rightarrow Y$ induces a map of generalized cone complexes $\Sigma(f): \Sigma_X \rightarrow \Sigma_Y$.

	The combinatorial structure of the cone complex $\Sigma_X$ is given by the associated \emph{Artin fan} (an Artin stack logarithimcally \'etale over $\spec k$), denoted $\mathcal{A}_X$. It is constructed as follows. For $\sigma$ a cone $\sigma \subset N_\mathbb{R}$ in $\Sigma_X$, with corresponding monoid $P = \sigma^\vee \cap N^\vee$. Let
	$$\cA_\sigma = [\spec k[P]/\spec k[P^{\text{gp}}]]$$
	be the stack quotient, which we call the \emph{Artin cone} of $\sigma$. The Artin fan $\cA$ is given by gluing these together, or more succinctly, as a colimit 
	$$\cA_X = \cA_{\Sigma_X} = \varinjlim_{\sigma \in \Sigma_X} \cA_\sigma.$$
	
	There is a natural morphism $X \rightarrow \cA_X$. Given $X \rightarrow Y$ a sufficiently nice logarithmic morphism, we have:
	\begin{center}
		\begin{tikzcd}
			X \arrow[d] \arrow[r]   & Y \arrow[d]   \\
			\mathcal{A}_X \arrow[r] & \mathcal{A}_Y
		\end{tikzcd} 
	\end{center}
	This holds, for instance, if $X$ is logarithmically smooth \cite[Proposition 2.8]{ACGS20}.

	\begin{example}
		\begin{enumerate}
		\item Suppose $(X, \partial X)$ is a simple normal crossings pair, with $\partial X = D_1 \cup \ldots \cup D_k$. Then $X$ with the divisorial logarithmic structure given by $\partial X$ has cone complex $\Sigma_X$ is constructed from the combinatorial data of the intersections of $D_i$. For each $D_i$, there is a corresponding ray $\rho_i$ in $\Sigma_X$. For each non-empty $\ell$-fold intersection $D_{i_1} \cap \ldots \cap D_{i_\ell}$, there is an associated $\ell$-simplex spanned by the rays $\rho_{i_1}, \ldots, \rho_{i_\ell}$. 
		
		\item Suppose $X$ is a toric variety with toric boundary $\partial X$. Then the associated cone complex $\Sigma_X$ is the fan of $X$, but forgetting its embedding into the character lattice $N$. We will often use $\Sigma_X$ both for the fan as a toric variety and its fan as a logarithmic scheme. 
		\end{enumerate}
	\end{example}

	\begin{definition}
		A \emph{subdivision} $\tilde{\Sigma}$ of a cone complex $\Sigma$ is a morphism of cone complexes that is bijective on supports, such that the lattice points of each cone $\sigma \in \tilde{\Sigma}$ are exactly the intersection of the lattice points of $\Sigma$ with the image of $\sigma$. 
	\end{definition}

	A subdivision of a cone complex $\Sigma_X$ associated to $X$ corresponds to a modification of Artin fans $\tilde{\mathcal{A}}_X \rightarrow \mathcal{A}_X$. This can be pulled back\footnote{In the category of fine, saturated logarithmic stacks.} along $X \rightarrow \mathcal{A}_X$ to give a logarithmic modification $\tilde{X} \rightarrow X$.

	\subsection{Logarithmic curves with expanded targets} We review the theory of logarithmic Gromov--Witten theory with expansions \cite{R19}. 
	
	Consider a toric variety $X$ with its toric boundary $\partial X = D_1 \cup \ldots \cup D_k$. Fix a curve class $\beta \in H_2(X)$, integers $g, n \geq 0$, and a $k \times m$ matrix $(c_{ij})$ with integer entries. This data will be denoted $\Lambda$. We wish to study maps of pairs
	$$f: (C, p_1, \ldots, p_n, q_1, \ldots, q_m) \rightarrow (X, \partial X)$$
	where $C$ is a smooth, genus $g$ curve with homology class $\beta$, meeting the boundary components $D_i$ at $q_j$ with contact order $c_{ij}$. There is a Deligne--Mumford stack $\mathcal{M}_\Lambda^\circ$ parametrizing such maps. 
	
	This space is compactified by modifying the target. 
	
	\begin{definition} \cite[Definition 2.2.1]{R19}
		A \emph{tropical expansion} of $\Sigma$ over a base $\Sigma_B $ is a cone complex $\tilde{\Sigma}$ together with a morphism of cone complexes 
		\begin{center}
			\begin{tikzcd}
				\tilde{\Sigma} \arrow[d] \arrow[r, "\pi"] & \Sigma \\
				\Sigma_B                                 &       
			\end{tikzcd}
		\end{center}
		satisfying:
		\begin{enumerate}[(i)]
			\item Every cone of $\tilde{\Sigma}$ surjects onto a cone of $\Sigma_B $. 
			\item The map $\tilde{\Sigma} \rightarrow \Sigma$ is a subdivision of cone complexes. 
			\item Over any point $p \in \Sigma_B $, the map $\pi$ restricts to a polyhedral subdivision $\tilde{\Sigma}_p \rightarrow \Sigma_p$. 
			\item  The fiber over $0 \in \Sigma_B $ maps isomorphically onto a union of faces in $\Sigma$. 
		\end{enumerate}
		
	\end{definition}

	\begin{example}
		Consider $X \times \mathbb{A}^1$, whose tropicalization is $\Sigma(X \times \mathbb{A}^1) = \Sigma_X \times \mathbb{R}_{\geq 0}$. A polyhedral subdivision $\tilde{\Sigma}_X$ of $\Sigma_X$ induces a subdivision of $\Sigma(X \times \mathbb{A}^1)$ (by taking the cone over $\tilde{\Sigma}_X \times \{1\}$), and hence a logarithmic modification of $X \times \mathbb{A}^1$. The fiber over $0$ of the projection $X \times \mathbb{A}^1 \rightarrow \mathbb{A}^1$ is called an \emph{expansion} of $X$. 
	\end{example}

	\begin{definition}\cite[Definition 2.1.11]{R19}
		Given a tropical expansion $\tilde{\Sigma}$ over $\Sigma$ over $\Sigma_B $, the associated \emph{logarithmic expansion} is obtained as the corresponding logarithmic modification $\tilde{\mathcal{X}}$
		\begin{center}
			\begin{tikzcd}
				\tilde{\mathcal{X}} \arrow[d] \arrow[r] & X \times B \arrow[d]                  \\
				\mathcal{A}_{\tilde{\Sigma}} \arrow[r]  & \mathcal{A}_{\Sigma \times \Sigma_B }
			\end{tikzcd}
		\end{center} 
		with its map to the base $B$. 
	\end{definition}
	
	The space $\mathcal{M}_\Lambda$ is constructed as the moduli space of logarithmic curves mapping to expansions of $X$ (with specified curve class, genus, and contact orders):
	\begin{center}
		\begin{tikzcd}
			C \arrow[d] \arrow[r, "f"] & \tilde{\mathcal{X}} \arrow[ld] \\
			S                          &                     
		\end{tikzcd}
	\end{center}
	with a stability condition. 
	
	The moduli space $\mathcal{M}_\Lambda$ admits evalulation morphisms and virtual class, and Gromov--Witten invariants can be defined in the usual way. It was shown in \cite{R19} that these invariants are the same as the usual logarithmic Gromov--Witten invariants as in \cite{GS13}. 
	
	\begin{remark}
		The construction of $\mathcal{M}_\Lambda$ depends on a auxillary polyhedral choice, coming from the fact that we may perform subdivisions to obtain new moduli spaces with the same relevant properties. However, these choices give the same Gromov--Witten invariant and we will not be concerned with the choice used. 
	\end{remark}
	
	We are interested in the case where $X$ is a smooth, proper toric threefold. Fix integers $a,b$ such that 
	$$a+2b= \int_\beta c_1(T_X).$$ 
	Let $\varphi_1, \ldots, \varphi_a \in H^{4}(X)$ and $\varphi_{a+1}, \ldots, \varphi_{a+b} \in H^{6}(X)$ be cohomology classes on $X$. Consider the logarithmic Gromov--Witten invariant
	$$\Linv = \int_{[\mathcal{M}_{\Lambda}(X)]^{\text{vir}}} \ev_{1}^*(\varphi_1) \cdots  \ev_{n}^*(\varphi_{a+b}).$$

	We can tropicalize a family of logarithmic curves to expansions of $X$. 
	\begin{center}
		\begin{tikzcd}
			\Sigma_C \arrow[r, "\Sigma(f)"] \arrow[d, "\Sigma(\pi)"'] & \tilde{\Sigma}_X \arrow[dl] \\
			\Sigma_S .
		\end{tikzcd}
	\end{center}

	The fibres of $\Sigma(\pi)$ are tropical curves, and the tropicalization naturally has an interpretation as a family of parametrized tropical curves. Indeed, they map to polyhedral subdivisions of the fan $\Sigma_{X}$, but since $X$ is a complete toric variety, its fan is complete, and we may consider these as maps to $\mathbb{R}^3$.

	\begin{example} \label{exa:expansions} 
		We will use this example in Section \ref{sec:exdim2}. 
		Consider a tropical type $\gamma$ comprised of two four valent vertices, joined by two edges, with the neighbourhood of the cycle contained in a plane. (For a picture, see right hand side of Figure \ref{fig:twochoices}.) Changing coordinates, we may assume that the edges in the cycle are parallel to the $x$-axis, and that the plane is parallel to the $x$-$y$ plane.
		
		Suppose we have a logarithmic map to expansions $f\colon C \rightarrow \tilde{X}$ which tropicalizes to $\gamma$. Then the curve $C$ is given by gluing two components $C_i \cong \mathbb{P}^1$s at two points, and each component $C_i$ is mapped to $X_i$, where $X_i$ are toric 3-folds glued along a toric boundary divisor. Let this divisor be $D$. Each $\mathbb{P}^1$ is marked with 4 points, which without loss of generality, these marked points are $0,1,\infty,p_i$ respectively, $p_1,p_2 \in \mathbb{C} \setminus \{0,1,\infty\}$, as shown in Figure \ref{fig:nodal}. Here we glue the two $0$s to each other, and the point $p_1$ to the point $p_2$.   
		
		\begin{figure}[h]
			\centering
			\begin{tikzpicture}
				\draw plot [smooth cycle,tension=1] coordinates {(0,0) (2,0) (1,1) (2,2) (0,2)};
				\draw plot [smooth cycle,tension=1] coordinates {(2.1,0) (3.1,1) (2.1,2) (4.1,2) (4.1,0)};
				\fill (0,0) circle[radius=2pt];
				\fill (0,2) circle[radius=2pt];
				\fill (2.05,0.1) circle[radius=2pt];
				\fill (2.05,1.9) circle[radius=2pt];
				\fill (4.1,2) circle[radius=2pt];
				\fill (4.1,0) circle[radius=2pt];
				\node at (0.2, 0.2) {$\infty$};
				\node at (0.2, 1.8) {$1$};
				\node at (1.7, 0.1) {$p_1$};
				\node at (1.7, 1.9) {$0$};
				\node at (2.4, 0.1) {$p_2$};
				\node at (2.4, 1.9) {$0$};
				\node at (3.9,1.8) {$1$};
				\node at (3.9,0.2) {$\infty$};

				\fill (-4,1) circle[radius=2pt];
				\draw (-4, 1.05) -- (-3, 1.05);
				\draw (-4, 0.95) -- (-3, 0.95);
				\draw (-4, 1) -- (-5, 2); 
				\draw (-4, 1) -- (-5, 0); 
				\node at (-5.2,-0.1) {$\infty$};
				\node at (-5.1,2.1) {$1$};
				\node at (-2.8,0.8) {$p_1$};
				\node at (-2.8,1.2) {$0$};
				\node at (-6,2) {$\Gamma_1$};
				
				\fill (8,1) circle[radius=2pt];
				\draw (8, 1.05) -- (7, 1.05);
				\draw (8, 0.95) -- (7, 0.95);
				\draw (8, 1) -- (9, 2); 
				\draw (8, 1) -- (9, 0); 
				\node at (9.2,-0.1) {$\infty$};
				\node at (9.1,2.1) {$1$};
				\node at (6.8,0.8) {$p_2$};
				\node at (6.8,1.2) {$0$};
				\node at (10,2) {$\Gamma_2$};
				
			\end{tikzpicture}
			\caption{The corresponding nodal curve mapping to an expansion.}
			\label{fig:nodal}
		\end{figure}	
		
		In the interior of each $X_i$, the map $C_i \rightarrow X_i$ is given by a rational function $\mathbb{P}^1 \setminus{0,1,\infty, p_i} \rightarrow (\mathbb{C}^*)^3$, with orders of zeros and poles at the four marked points determined by the direction vectors of infinite ends of $\Gamma_i$. This in turn is determined by two sets of three rational functions 
		$$\mathbb{P}^1 \setminus\{0,1,\infty, p_i\} \rightarrow \mathbb{C}^*,$$ which we call $f_i,g_i,h_i$, corresponding to directions $e_1,e_2,e_3$ in the character lattice $N$ respectively. These are determined by the edge directions up to constants $a_i,b_i,c_i$. Since we assumed that the tropical curve is contained inside the plane $\langle e_1, e_2 \rangle$, $h_i = c_i$ are constant. 
		
		The gluing is given by imposing that, on the toric boundary divisor $D$, the images of the two $0$s and $p_i$s coincide respectively. Since the two corresponding infinite ends are pointing in direction $e_1$, this reduces to $g_1(0) = g_2(0), g_1(p_1) = g_2(p_2),$ and $c_1 = c_2$. The first equation gives a relation between $b_1$ and $b_2$ (since $g_1(0), g_2(0) \neq 0$). Thus, the second equation imposes a relation between $p_1$ and $p_2$ of the form
		$$\frac{b_1}{b_2}(p_1 -1)^{d_1} =  (p_2 - 1)^{d_2},$$
		with $d_i \in \mathbb{Z}_{> 0}$. For each $p_1$, there are finitely many solutions for $p_2$. The parameters $a_i, b_i, c_i$ are subsumed by the rubber action, and we are left with a 1 dimensional locus of curves parametrized by $p_1$. The excess number of dimensions is 1. 
	\end{example}

	\subsection{Decomposition} \label{sec:decomposition} 
	
	The decomposition theorem \cite[Theorem 1.2]{ACGS20} relates the logarithmic Gromov--Witten invariant of a degenerate space to a sum indexed by tropical types. We prove a variant of this theorem using toric intersection theory. The strategy is as in the proof of the correspondence theorem of \cite{CK26}, but modification is needed because $\mathcal{M}_{\Lambda}$ is not logarithmically smooth. 
	
	\subsubsection{Reminder on toric intersection theory}
	
	We review the theory of Minkowski weights developed in Fulton--Sturmfels \cite{FS97}. Let $X$ be an $r$-dimensional complete toric variety with fan $\Sigma_X$. Given a Chow cohomology class $c \in A^k(X)$, there is a well-defined function from cones of codimension $k$ to $\mathbb{Q}$, given by taking the degree of $c \cap V(\sigma)$. Such functions satisfy the following condition. 
	
	\begin{definition}
		A $\mathbb{Q}$ valued function on the codimension $k$ cones of $\Sigma$ (denoted $\Sigma^{(k)}$) is \emph{balanced} if it satisfies 
		$$\sum_{\sigma \in \Sigma^{(k)}: \sigma \subset \tau} \langle u, n_{\sigma, \tau} \rangle \cdot c(\sigma) = 0 $$
		where $\tau$ is a cone of codimension $k+1$ and $n_{\sigma,\tau}$ is the generator of $N_\sigma/N_\tau$. A balanced function of this form is called a \emph{Minkowki weight}, or a \emph{tropical $(r-k)$-cycle}. 
	\end{definition}
	
	Given two Minkowski weights $c, c'$ on cones of codimension $k$, they can be added in a natural way. If $c, c'$ are Minkowski weights on cones of codimension $p,q$ respectively, we can define a Minkowski weight on cones of codimension $p+q$, as follows
	$$c\cdot c' (\gamma) =  \sum_{(\sigma, \tau) \in \Sigma^{(p)} \times \Sigma^{(q)}} m_{\sigma, \tau}^\gamma c(\sigma)c'(\tau)  $$
	where the sum is over cones such that $\sigma$ meets $\tau+v$, with $v$ a \emph{generic} displacement vector, and $\gamma \subset \sigma, \tau$ such that $\operatorname{codim}(\sigma) + \operatorname{codim}(\tau) = \operatorname{codim}(\gamma)$. Here $m_{\sigma, \tau}^\gamma = [N: N_\sigma + N_\tau]$. This gives the Minkowski weights the structure of a graded ring. 
	
	\begin{theorem}[Fulton--Sturmfels]
		The operational Chow cohomology ring of $X$ is naturally isomorphic to the ring of Minkowski weights on $\Sigma$. 
	\end{theorem}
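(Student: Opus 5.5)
The plan is to identify both sides through their pairing with the Chow groups $A_*(X)$ of the complete toric variety $X$. The argument rests on two structural inputs. The first is the presentation of $A_k(X)$ due to Fulton--MacPherson--Sottile--Sturmfels: it is generated by the orbit closures $[V(\sigma)]$ for $\sigma \in \Sigma^{(k)}$ (orbit closures of dimension $k$ correspond to cones of codimension $k$). The relations are generated by divisors of characters: for each $\tau \in \Sigma^{(k+1)}$ and each $u \in M \cap \tau^\perp$,
$$[\operatorname{div}(\chi^u|_{V(\tau)})] = \sum_{\sigma \in \Sigma^{(k)}:\, \sigma \subset \tau} \langle u, n_{\sigma,\tau}\rangle [V(\sigma)] = 0.$$
The second input is Kronecker duality for complete toric varieties: the map $A^k(X)_{\mathbb{Q}} \to \Hom(A_k(X), \mathbb{Q})$, $c \mapsto (\alpha \mapsto \deg(c\cap \alpha))$, is an isomorphism. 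I would prove this by reducing to the smooth projective case via a toric resolution (subdividing $\Sigma$), using that operational classes are determined by their action on envelopes (Kimura's descent), and using that for smooth complete toric varieties Poincar\'e duality holds and $A_*$ has the orbit basis structure.

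Granting these, the additive statement follows in three steps.
\begin{enumerate}
\item Given $c \in A^k(X)$, set $c(\sigma) = \deg(c \cap [V(\sigma)])$. Applying $c$ to the relation above shows that $c(\sigma)$ is balanced, so it is a Minkowski weight.
\item Conversely, a balanced function kills every relation, so it defines a homomorphism $A_k(X)\to \mathbb{Q}$. By Kronecker duality this homomorphism comes from a unique $c$.
\item Injectivity of $c \mapsto (c(\sigma))_\sigma$ is also part of Kronecker duality, since the $[V(\sigma)]$ generate $A_k(X)$.
\end{enumerate}
This gives the isomorphism of graded groups, compatible with addition.

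For multiplicativity, I would compute $\deg((c\cdot c') \cap [V(\gamma)])$ with $\operatorname{codim}\gamma = p+q$. Write $c\cdot c'$ via the diagonal, i.e. as an intersection on $X\times X$ pulled back along $\Delta$. Then replace $\Delta$ by a generic translate $\{(x, t\cdot x)\}$ using the torus action $t = $ a one-parameter subgroup in generic direction $v$, and take the limit $t \to 0$. Each limit component of the deformed intersection is an orbit closure $V(\gamma)$ with $\gamma \subset \sigma\cap\tau$, arising exactly when $\sigma$ meets $\tau + v$. Its multiplicity is the lattice index $[N : N_\sigma + N_\tau]$, which comes from the degree of the torus multiplication map $T_\sigma \times T_\tau \to T_\gamma$. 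This reproduces the fan displacement rule and hence the product $c\cdot c'(\gamma)$.

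The main obstacle is twofold: making the degeneration rigorous, and establishing Kronecker duality in the singular complete case. For the degeneration, the limit of the translated diagonal must be identified as a cycle with exactly these multiplicities, which I would handle by working on a common refinement where $\sigma$ and $\tau+v$ intersect transversely. For Kronecker duality, the descent from a resolution must be made precise.
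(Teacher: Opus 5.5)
The paper does not prove this statement; it quotes it from \cite{FS97}. Your outline is essentially the Fulton--Sturmfels argument: the additive isomorphism comes from the orbit-closure presentation of $A_k(X)$ plus Kronecker duality, and the product comes from degenerating the diagonal along a generic one-parameter subgroup. Two slips should be fixed.

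First, the relation. The sum runs over $\sigma\in\Sigma^{(k)}$ \emph{containing} $\tau$ as a facet, since these index the invariant divisors of $V(\tau)$. With $\sigma\subset\tau$, which you copied from the paper's definition, the sum is empty.

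Second, the product step. The limit components are not orbit closures $V(\gamma)$. You should degenerate the diagonal of the complete toric variety $V(\gamma)$, whose fan is the star of $\gamma$. Its limit is $\sum m^\gamma_{\sigma,\tau}[V(\sigma)\times V(\tau)]$, with coefficient $[N:N_\sigma+N_\tau]$. The sum is over $\sigma,\tau\supseteq\gamma$ with $\dim V(\sigma)+\dim V(\tau)=p+q$ and $\sigma\cap(\tau+v)\neq\emptyset$, for the appropriate sign of $v$. Then $\deg((c\cdot c')\cap[V(\gamma)])=\deg((c\times c')\cap\Delta_*[V(\gamma)])$ picks out exactly the terms with $\operatorname{codim}\sigma=p$ and $\operatorname{codim}\tau=q$. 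Also, because $\gamma\subseteq\sigma\cap\tau$, $\sigma$ meets $\tau+v$ in $N_{\mathbb{R}}$ if and only if their images meet modulo $\operatorname{span}\gamma$. So a single generic $v$ serves every $\gamma$.

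Where you genuinely diverge is Kronecker duality. Fulton--Sturmfels take it from Fulton--MacPherson--Sottile--Sturmfels, where it follows integrally from the K\"unneth property of schemes with finitely many orbits of a solvable group. That route costs nothing extra.

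Your resolution route can work, but ``operational classes are detected on an envelope'' only gives injectivity. For surjectivity, checking Kimura's descent condition on $\tilde X\times_X\tilde X$ is unpleasant, because that scheme usually has infinitely many $T$-orbits. For example, over an $A_1$ point it contains $\mathbb{P}^1\times\mathbb{P}^1$ with the diagonal $\mathbb{G}_m$-action. The workable form is Kimura's exact sequence $0\to A^*(X)\to A^*(\tilde X)\oplus A^*(S)\to A^*(E)$. Here the toric resolution $p$ is an isomorphism off a union $S$ of orbit closures, and $E=p^{-1}(S)$. Since $S$ and $E$ are reducible, you must run an induction on dimension over complete unions of orbit closures. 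This gives a proof that avoids K\"unneth, at the cost of that induction.

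Working over $\mathbb{Q}$ matches the paper's $\mathbb{Q}$-valued weights, though \cite{FS97} proves the integral statement.
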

	
	\subsubsection{Decomposition theorem}
	
	We state and prove our decomposition theorem. The proof follows the method outlined in \cite[Chapter 4.6]{CMR23}, and used in \cite[Theorem B]{CK26}. Modifications are needed since $\mathcal{M}_{\Lambda}$ is not logarithmically smooth. 
	
	\begin{definition}
		We call a collection of $a$ tropical 1-cycles\footnote{We may refer to these as \emph{tropical lines}.} and $b$ points in $\mathbb{R}^3$ an \emph{affine condition}. We denote it $A = (\ell_i, p_j)$. 
	\end{definition}

	\begin{definition}
		We say a tropical curve $(h,\Gamma)$ is \emph{rigid for the affine condition} $A$ if there is no local deformation in $\mathcal{M}_\Lambda^\trop$ which preserves the combinatorial type and continues to meet the affine conditions. 
		
		A combinatorial type $\gamma$ (with at least one tropical curve meeting the conditions) is \emph{rigid} for $A$ if all the tropical curves with combinatorial type $\gamma$ meeting the affine condition $A$ is rigid for $A$.
	\end{definition}
	
	If the affine condition is empty, then we recover the definition of rigidity in \cite[Definition 3.6]{ACGS20}. In this case, the only rigid curves are those with one vertex at the origin and legs along the rays of the fan $\Sigma_X$ of $X$.

	As with $\operatorname{ACGS}_\Lambda(X)$, the space $\mathcal{M}_\Lambda(X)$ is stratified with strata indexed by tropical type:
	$$\mathcal{M}_\Lambda(X) = \bigcup_{\gamma} \mathcal{M}_\gamma.$$
	
	\begin{definition}
		For $\gamma$ a strata of virtual dimension $0$, the \emph{virtual multiplicity} $m_\gamma^{\vir}$ is the degree of the virtual fundamental class on this strata. 
	\end{definition}
	
	The virtual multiplicity $m_\gamma^{\vir}$ is a (potentially negative) rational number. 
	
	Since we assumed that $X$ is a smooth toric variety, cohomology classes in $X$ correspond to tropical cycles (balanced Minkowski weights) \cite{FS97}. Fixing cohomology classes $\varphi_1, \ldots, \varphi_a \in H^{2r-2}(X)$, let the corresponding tropical 1-cycles in $\mathbb{R}^n$ be $\ell_1^{\trop}, \ldots, \ell_a^{\trop}$.

	We define our tropical multiplicity. Suppose $(h, \Gamma)$ is a tropical curve meeting affine conditions $A$. Let $\sigma_\gamma$ be the cone in $\Sigma(\mathcal{M}_\Lambda)$ associated to $\gamma$. Extend the edge in the tropical line which meets the relevant marking, so that it becomes a line $\ell'_i$ in $\mathbb{R}^3$. Quotient out by all such lines, and we obtain a tropical evaluation space $\Ev'$ of dimension $D$. 
	
	We now have a map
	\begin{equation} \label{eqn:ev'}
		\sigma_{\gamma} \rightarrow (\mathbb{R}^3)^a \times (\mathbb{R}^3)^b \rightarrow \Ev' = \prod_{i = 1}^a(\mathbb{R}^3/\ell_i' ) \times (\mathbb{R}^3)^b,
	\end{equation}
	which gives an isomorphism on the associated map of vector spaces. 
	
	\begin{definition}\label{def:totalmult}
		The multiplicity $M_\gamma$ is the determinant of this linear map. 
	\end{definition}

	\begin{proposition}[Decomposition] \label{prop:decomp}
		Let $X$ be a smooth toric variety. Let $a,b$ be non-negative integers. Fix cohomology classes $\varphi_1, \ldots, \varphi_a \in H^{2r-2}(X)$, and let $\varphi_{a+1}= \ldots= \varphi_{a+b} \in H^{2r}(X)$ be dual to a point class. Then the associated logarithmic Gromov--Witten invariant satisfies
		$$\Linv = \sum_\gamma M_\gamma m_\gamma^{\vir},$$
		where the sum is taken over tropical types $\gamma$ which are rigid over the affine condition given by tropical line conditions $v_i + \ell_i^{\trop}$, and point conditions $u_j$. Here $(v_i, u_j)$ is a generic tuple of vectors, and $\ell_i^{\trop}$ is the tropical cycle associted to $\varphi_i$. 
	\end{proposition}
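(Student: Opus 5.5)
The plan is to follow the Fulton--Sturmfels strategy of \cite[Chapter 4.6]{CMR23} and \cite[Theorem B]{CK26}: push the virtual class forward along the evaluation map to a toric target and compute the resulting class by tropical intersection theory. Write $\ev\colon \mathcal{M}_\Lambda(X) \to X^a \times X^b$ for the product of the evaluation maps at the interior markings. Then
\[
\Linv = \int_{X^{a+b}} \ev_*[\mathcal{M}_\Lambda(X)]\vir \cdot \prod_i \varphi_i .
\]
The variety $X^{a+b}$ is a smooth complete toric variety, so by the theorem of Fulton--Sturmfels each class $\varphi_i$ is a Minkowski weight, namely the tropical cycle $\ell_i^{\trop}$ or a point. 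The pushforward $\ev_*[\mathcal{M}_\Lambda]\vir$ is a Chow class of dimension $\vdim$, and on a smooth complete toric variety it is determined by its degrees against the torus-orbit closures of complementary dimension. So it too corresponds to a Minkowski weight $c_{\vir}$ on the fan of $X^{a+b}$. The invariant is then the degree of the product of Minkowski weights $c_\vir \cdot \prod \ell_i^{\trop} \cdot \prod [u_j]$. Since Minkowski weight products are computed by generic displacement, we may translate the conditions by a generic tuple $(v_i,u_j)$.

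The key step is to identify $c_\vir$ tropically, and here the lack of log smoothness of $\mathcal{M}_\Lambda$ matters. I would pass to a toric (logarithmic) modification of $X^{a+b}$, refining the fan so that the image of the tropical moduli space $\Sigma(\mathcal{M}_\Lambda)$ under $\Sigma(\ev)$ is a union of cones. Fulton--Sturmfels is compatible with such refinements, so the invariant is unchanged. Each cone $\tau$ of the correct dimension in the support then receives the weight $c_\vir(\tau)$. This weight is the degree of $\ev_*[\mathcal{M}_\Lambda]\vir$ against the orbit closure $V(\tau')$ of complementary dimension. It equals a sum over cones $\sigma_\gamma$ of $\Sigma(\mathcal{M}_\Lambda)$ mapping onto $\tau$ with full dimension, and each such cone contributes its lattice index $[N_\tau : \Sigma(\ev)(N_{\sigma_\gamma})]$ multiplied by the degree of the virtual class restricted to the stratum $\mathcal{M}_\gamma$.

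To justify this, I would use the compatibility of the virtual class with strata. The obstruction theory of $\mathcal{M}_\Lambda$ is relative to the log-smooth Artin stack of prestable log maps to the Artin fan. By \cite{ACGS20,R19}, refining the virtual class along $\ev^{-1}(V(\tau'))$ localises it onto strata whose tropical types map with the expected dimension. On those strata the refined class has degree $m_\gamma^\vir$. Strata whose cones map with positive-dimensional fibres contribute nothing against generic conditions, because their tropical image has lower dimension and generic displacement misses it.

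Finally I would expand the tropical intersection. The generic stable intersection of $c_\vir$ with the displaced cycles is a finite sum over points of $\Sigma(\ev)(\sigma_\gamma)$ meeting the affine conditions. These points are precisely the tropical curves of type $\gamma$ that are rigid for $A = (v_i+\ell_i^{\trop},u_j)$. At each such point the intersection multiplicity $m^\gamma_{\sigma,\tau}$ is a lattice index. Combining it with the index from $\Sigma(\ev)$ gives the absolute value of the determinant of the composite map $\sigma_\gamma \to \Ev'$ of \eqref{eqn:ev'}, which is $M_\gamma$. Summing yields $\sum_\gamma M_\gamma m_\gamma^\vir$.

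I expect the main obstacle to be the localisation step. Showing that the virtual class, which is not a fundamental class because $\mathcal{M}_\Lambda$ is not log smooth, still pushes forward to a Minkowski weight with the stated stratum-wise weights requires care. Superabundant strata in particular have excess dimension, so their virtual degree is not a point count. I would handle this by working on the log-smooth ambient space, using the perfect obstruction theory relative to it, and applying the ACGS decomposition formalism there.
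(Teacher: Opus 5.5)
Your proposal is correct and follows essentially the same route as the paper. Both arguments push the virtual class forward to a subdivided product $X^{a+b}$ and read it as a Minkowski weight $c_{\mathrm{vir}}$ via Fulton--Sturmfels. Both then write $c_{\mathrm{vir}}(\sigma)$ as a sum, over $D$-dimensional cones mapping onto $\sigma$, of a lattice index times $m_\gamma^{\mathrm{vir}}$, and combine this with the stable-intersection index $m^0_{\sigma,\tau}$ to obtain $M_\gamma$.

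The only difference is how the localisation step is phrased. The paper subdivides both $\mathcal{M}_\Lambda$ and $X^{a+b}$ so that the induced map of Artin fans is flat. It then pulls back the piecewise polynomial representing $PD(\sigma)$ along the commutative square of Artin fans; this is the concrete version of your refined-pullback argument. In particular, you need to modify $\mathcal{M}_\Lambda$ as well, not only the target, for $\ev$ to lift and for source cones to map onto cones.
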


	\begin{proof}
		Consider the evaluation map 
		$$\ev: \mathcal{M}_\Lambda \rightarrow X^n.$$
		We obtain Gromov--Witten invariants by pushing forward the virtual fundamental class and capping it with the chosen cohomology classes. Since $X$ is a smooth toric variety, $X^n$ is also, and the cohomology of $X^n$ can be understood in terms of Minkowski weights. However, the strata of $\mathcal{M}_\Lambda$ do not map nicely to the strata of $X^n$. To understand the pushforward of the virtual fundamental class in terms of strata (and hence tropical types), we modify the map. The proof proceeds in four steps. 
		
		\textbf{1. Perform the subdividision.}
		There exists a subidivison of fans such that the induced modification
		$$\ev^\dagger: \mathcal{M}_\Lambda^\dagger \rightarrow (X^n)^\dagger$$
		has the property that the corresponding morphism of Artin fans is flat. Pulling back cohomology classes along $(X^n)^\dagger \rightarrow X^n$ gives the invariants we are interested in. We can further assume that $(X^n)^\dagger$ is also a smooth toric variety, and that the cones in the tropicalization of $\mathcal{M}_{\Lambda}^\dagger$ are simplicial.

		\textbf{2. Express the Gromov--Witten invariant using Minkowski weights.} The class $\Ev^\dagger_*([\Mspace^\dagger]^{\vir})$ is a class in $A_{N}((X^n)^\dagger)$, where $D = \vdim (\Mspace)$. By Poincar{\'e} duality, this corresponds to a cohomology class $c_{\vir} \in A^{3n - N}((X^n)^\dagger)$, which we interpret as a function from the $D$-dimensional cones to $\mathbb{Q}$. 
		Let $c \in H^{\vdim(\Mspace)}$ be the cohomology class given as the product of $\varphi_1, \ldots, \varphi_{a+b}$, interpreted as a Minkowski weight. Then the logarithmic Gromov--Witten invariant is given as the degree of $c_{\vir} \cdot c$. By \cite[Proposition 3.1]{FS97}, this is given as a sum
		\begin{equation} \label{eq:sum}
			\sum_{(\sigma, \tau) \in \triangle^{(3n - D)} \times \triangle^{(D)}} m_{\sigma, \tau}^{0} \cdot c_{\vir}(\sigma) \cdot c(\tau)
		\end{equation}
		where $m_{\sigma, \tau}^0$ is $[N:N_\sigma+N_\tau]$ if the cone $\sigma$ meets $\tau+v$, and $0$ otherwise. Here $v$ is a generic displacement vector.

		\textbf{3. Interpret (\ref{eq:sum}) as sum over tropical types meeting conditions.} Each cone $D$-dimensional cone $\sigma$ in $\Sigma((X^n)^\dagger)$ corresponds to a cohomology class by Poincar{\'e} duality, which we will call $PD(\sigma)$. Explicitly, this can be expressed as the piecewise polynomial function $x_1 \ldots x_D$ on the cone $\sigma$, and is $0$ on all rays not in $\sigma$. The number $c_{\vir}(\sigma)$ is the cap product of $\ev^\dagger_*([\Mspace]^{\vir})$ with this cohomology class (by associativity of cap product).

		By the construction of the moduli space \cite{R19}, we have the following commutative diagram 
		\begin{center}
			\begin{tikzcd}
				(\Mspace)^\dagger \arrow[r] \arrow[d]    & (X^n)^\dagger \arrow[d]    \\
				\mathcal{A}((\Mspace)^\dagger) \arrow[r, "f"] & \mathcal{A}((X^n)^\dagger).
			\end{tikzcd}
		\end{center}
		
		We can pull back the cohomology class $PD(\sigma)$ along the morphism $f$ between Artin fans. This corresponds to the piecewise polynomial pulled back along the morphism of cones. If $\sigma_1', \ldots, \sigma_t'$ are $D$-dimensional cones in $\Sigma(\Mspace)$ mapping surjectively onto $\sigma$, then the corresponding piecewise polynomial has degree exactly $D$ precisely on these cones $\sigma'_i$, and $0$ elsewhere. We can calculate $c_{\vir}(\sigma)$ by capping the cohomology class in $H^*((\Mspace)^\dagger)$ corresponding to this piecewise polynomial with the virtual fundamental class. 
		
		In particular this is non-zero only if $\sigma$ is in the image of the morphism of cone complexes $\Sigma(\ev^\dagger)$. The sum runs over all cones $\sigma$ in this image that meet $\tau +v$. In other words, over all cones $\sigma$ that are the image of cones in $\Sigma((\Mspace)^\dagger)$ corresponding to rigid tropical types which contain a tropical curve where the evaluation morphism sends the marked points to the given point and line conditions.

		\textbf{4. Study the contributions of each tropical type.} 
		For each $D$-dimensional cone $\sigma'$ in $\Sigma((\Mspace)^\dagger)$ corresponds to a stratum of virtual dimension $0$, say with rays $\rho_1, \ldots, \rho_D$. The multiplicity of $\sigma'$, denoted $\operatorname{mult}(\sigma')$ is the index of the lattice spanned by the rays $\rho_i$ in $N_{\sigma}$. The intersection of the divisors on $\mathcal{A}((\Mspace)^\dagger)$ associated to $\rho_i$ is $(1/\operatorname{mult}(\sigma')) [V(\sigma')]$. 
		
		Let $P$ be the piecewise polynomial function which is the product of the $D$ piecewise linear functions with slope $1$ along $\rho_i$ and $0$ along all other rays. From the above, the cohomology class corresponding to $P$ gives $1/\operatorname{mult}(\sigma')$ when capped with the virtual fundamental class. It follows that
		$$c_{\vir} (\sigma) = \sum m_{\gamma_i} m_{\gamma_i}^{\vir}$$
		where for the tropical type $\gamma_i$ corresponding to $\sigma_i'$, the multiplicity $m_{\gamma_i}$ is the determinant of the linear map between $N_{\sigma_i}$ and $N_{\sigma}$. 
		
		Finally, note that  $m_{\sigma, \tau}^0$ is the lattice index $[N:N_\sigma+N_\tau]$. We have $M_\gamma = m_{\sigma, \tau}^0 m_\gamma$ and the result follows. 
	\end{proof}

	\subsubsection{Relation to degeneration} \label{sec:degenvsstrata}
	
	There are two ways to set up tropical curve counting, either by expanded degenerations, or by degeneration \cite{ACGS20, R19}. For the latter, we form a degeneration of the moduli space by considering a degenerating family of point and line conditions in $X$ (see \cite{Gr23} for a detailed discussion). To access the splitting and gluing formulas developed in the degeneration setting, a relation between the two perspectives is required.

	Given a stratum $\mathcal{M}_\gamma$ in $\Mspace$ associated to tropical curve $\gamma$, a curve in the interior of $\mathcal{M}_\gamma$ maps to a fixed expansion of $X$, which we will call $X_\gamma$. In the boundary of $\mathcal{M}_\gamma$, we have curves mapping to further expansions of $X_\gamma$. Thus $\mathcal{M}_\gamma$ can be considered as the moduli space of logarithmic stable maps to expansions of $X_\gamma$, \emph{up to rubber} in the components arising from expansions of $X$.
	
	A ``rigidification'' of the problem is constructed in \cite[Section 5]{MR25}. We summarise the key results. 
	Let the corresponding cone of $\mathcal{M}_\gamma$ in $\Sigma(\Mspace)$ be $\sigma_\gamma$. Choose a point $q$ in the interior of $\sigma_\gamma$. This is a choice of a fixed parametrised tropical curve $\Gamma$ of type $\gamma$. This induces a polyhedral subdivision of the fan of $X$, hence a degeneration $\mathcal{X}$ of $X$ over $\mathbb{A}^1$, and thus a degeneration $\Mspace(\mathcal{X})$ of $\Mspace$. The moduli space $\mathcal{M}_\gamma^\rig$ of logarithmic stable maps to expansions of $X_\gamma$ with tropical type $\gamma$ appears as a component of $\Mspace(\mathcal{X}_0)$.
	
	Tropically, this corresponds to a stellar subdivision of $\sigma_\gamma \times \mathbb{R}_{\geq 0}$ associated to the ray through $q$. The stratum $\mathcal{M}_\gamma^{\rig}$ corresponds to the ray through $q$, and we have a natural morphism $p:\mathcal{M}_\gamma^\rig \rightarrow \mathcal{M}_\gamma$. It is shown in \cite{MR25} that 
	\begin{enumerate}
		\item the pullback of $[\mathcal{M}_\gamma]^{\vir}$ coincides with $[\mathcal{M}_\gamma^\rig]^{\vir}$; and 
		\item there is a \emph{rigidifying insertion} $\eta$ such that $p_*([\mathcal{M}_\gamma^\rig]^{\vir} \cap \eta) = [\mathcal{M}_\gamma]^{\vir}$. 
	\end{enumerate}

	We will not use the full power of this result, but note down a simple corollary. 
	
	\begin{corollary} \label{cor:vanishing}
		If $[\mathcal{M}_{\gamma}^{\rig}]^{\vir}$ vanishes, then $m_{\gamma}^{\vir} = 0$. 
	\end{corollary}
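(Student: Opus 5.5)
This is immediate from the second result of \cite{MR25} recalled above, and I would prove it in the following steps.

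First, identify $m_\gamma^{\vir}$ as the degree of $[\mathcal{M}_\gamma]^{\vir}$, a class of dimension $0$ on the stratum $\mathcal{M}_\gamma$. This is the definition of virtual multiplicity for a stratum of virtual dimension $0$.

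Next, apply the rigidification statement (2). It provides a rigidifying insertion $\eta$ with
$$p_*\big([\mathcal{M}_\gamma^{\rig}]^{\vir} \cap \eta\big) = [\mathcal{M}_\gamma]^{\vir}.$$
If $[\mathcal{M}_\gamma^{\rig}]^{\vir} = 0$ in the Chow group of $\mathcal{M}_\gamma^{\rig}$, then its cap product with any operational class $\eta$ is zero, by linearity of the cap product. Its proper pushforward along $p$ is therefore also zero. Hence $[\mathcal{M}_\gamma]^{\vir} = 0$.

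Finally, take degrees. The degree of the zero class is $0$, so $m_\gamma^{\vir} = 0$. Since $\mathcal{M}_\gamma$ is proper, the degree is well defined and compatible with proper pushforward. This means one could equally compute the degree upstairs as $\deg([\mathcal{M}_\gamma^{\rig}]^{\vir} \cap \eta)$, which visibly vanishes.

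The only point needing care is bookkeeping: $\eta$ must be an operational (cohomology) class, so that capping is defined and linear. We also need $p$ to be proper, so that pushforward exists and preserves degree. Both are part of the \cite{MR25} setup, so I expect no real obstacle.
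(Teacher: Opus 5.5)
Your proposal is correct and follows the same route as the paper. The paper records this corollary as an immediate consequence of the rigidifying-insertion identity $p_*\big([\mathcal{M}_\gamma^{\rig}]^{\mathrm{vir}}\cap\eta\big) = [\mathcal{M}_\gamma]^{\mathrm{vir}}$ from \cite{MR25}; you spell out the linearity of cap product and proper pushforward that this uses, and then take degrees.
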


	\subsection{Splitting and gluing} \label{sec:split}

	So far, we have reduced the problem of calculating logarithmic Gromov--Witten invariants to a tropical enumeration problem and a calculation of $m_\gamma^{\vir}$. To calculate these virtual contributions, we will use the gluing formula developed by Maulik--Ranganathan \cite{MR24, R19}.

	Let $\gamma$ be a rigid tropical type with underlying graph $G$. To each vertex $v \in V(G)$, we have a toric variety $X_v$, with fan given by the star of $v$ in $G$. We denote by $\mathcal{M}_v$ the moduli space of maps to expansions of $X_v$, with degree and genus given by the star of $v$. There is a \emph{cutting morphism}
	$$\mathcal{M}_\gamma^\rig(X) \rightarrow \prod_v \mathcal{M}_v.$$
	
	On the interior, each edge $e$ adjacent to $v$ defines a toric divisor $D_e \subseteq X_v$, and a natural evaluation morphism $\mathcal{M}_v^\circ \rightarrow D_e^\circ$, and hence
	$$\prod_v \mathcal{M}_v^\circ \rightarrow \big(\prod_e D_e^\circ \big)^2$$
	By a suitable modification of $\prod_v \mathcal{M}_v$ and $\prod_e D_e^2$, this may be extended to a morphism
	$$\bigtimes_v \mathcal{M}_v \rightarrow \big( \bigtimes_e \cD_e \big)^2,$$
	where one of the evaluation morphisms to $ \bigtimes_e \cD_e$ is combinatorially flat. 
	
	\begin{remark}
		Transversality conditions force modifications $\cD_e$ of $D_e$ (see \cite[Example 5.4.2]{R19}). At the boundary of the moduli space, components are never allowed to ``fall into the boundary'' of $D_e$. Instead, a component bubbles off, maintaining transversality. 
	\end{remark}
	
	\begin{theorem} \cite[Theorem 8.3.2]{MR24}\label{thm:split}
		Form the following fiber square
		\begin{center}
			\begin{tikzcd}
				\mathcal{G}_\gamma \arrow[d] \arrow[r]               & \bigtimes_{v \in V(G)} \mathcal{M}_v \arrow[d] \\
				 \bigtimes_e \cD_e \arrow[r, "\triangle"] & \big(\bigtimes_e \cD_e \big)^2                        
			\end{tikzcd}
		\end{center}
	where $\triangle$ is the diagonal morphism. We have that
	\begin{enumerate}
		\item there is a morphism $\mu: \mathcal{M}^\rig_\gamma \rightarrow \mathcal{G}_\gamma$ which is finite and {\'e}tale of degree $\frac{1}{\ell(\gamma)}\prod w_e$. Here $w_e$ is the expansion factor on the edge $e$ of the rigid tropical curve. The factor $\ell(\gamma)$ is the product, taken over the edges $e'$ of the image of $\gamma$, of the least common multiple of the weights of the direction vectors of edges which map to $e'$; 
		\item $[G_\gamma]^{\vir} = \triangle^![\bigtimes_{v \in V(G)} M_v]^{\vir}$; and 
		\item $\mu_*[\mathcal{M}_\gamma^\rig]^{\vir} = \frac{1}{\ell(\gamma)} \prod_e w_e[\mathcal{G}_\gamma]^{\vir}$. 
	\end{enumerate}
	\end{theorem}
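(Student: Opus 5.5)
The plan is to realise $\mathcal{M}^\rig_\gamma$ as a cover of the fibre product $\mathcal{G}_\gamma$, and then to compare perfect obstruction theories. I would do this in three steps, matching the three parts of the statement.

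\textbf{Step 1: the cutting morphism and the map $\mu$.} Start from a family of logarithmic maps $C \to \tilde{\mathcal{X}}$ of rigid type $\gamma$. Partially normalise $C$ along the nodes corresponding to the edges $e \in E(G)$. This gives, for each vertex $v$, a family $C_v$ mapping to an expansion of $X_v$; its marked points are the preimages of the nodes. Here $X_v$ is the toric variety whose fan is the star of $v$.

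The two branches of a node corresponding to $e$ evaluate to the same point of $D_e^\circ$. In the compactified setting, I would use the modified targets $\cD_e$ built so that one of the evaluation maps is combinatorially flat. With these, the two evaluations agree in $\cD_e$, so the cut data factors through the diagonal. This produces $\mu\colon \mathcal{M}^\rig_\gamma \to \mathcal{G}_\gamma$.

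\textbf{Step 2: $\mu$ is finite étale, and its degree.} Conversely, given a point of $\mathcal{G}_\gamma$, I would reconstruct the glued curve. The underlying schematic gluing is unique once the evaluations match. The content lies in the logarithmic structure at each new node.

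Locally the node has equation $xy = t^{\ell_e}$. Its log structure is determined by the tropical data $\ell_e$, the edge length, and $w_e$, the expansion factor. Lifting the two branch log structures to a log smooth node compatible with the map to the expanded target requires choosing a $w_e$-th root of a unit determined by the two sides. This is a $\mu_{w_e}$-torsor, giving the factor $\prod_e w_e$.

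However, the base log structure of $\mathcal{M}^\rig_\gamma$ is the saturated fibre product, and some root choices are identified by rescaling the target expansion. For each edge $e'$ of the image, the torus acting on the corresponding component of the expansion absorbs a common root of order equal to the lcm of the weights of the edges mapping to $e'$. Quotienting by this gives the factor $1/\ell(\gamma)$.

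Étaleness should follow from the fact that all of these choices are discrete and vary in finite étale families over the base. Part (1) follows.

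\textbf{Step 3: the virtual class statements.} For (2), I would compare the relative obstruction theory of $\bigtimes_v \mathcal{M}_v$ over the product of Artin fans with that of $\mathcal{G}_\gamma$. Along each glued node, the logarithmic tangent bundle of the glued curve sits in a normalisation exact sequence. This sequence differs from the disjoint union of the vertex tangent bundles exactly by the pullback of $T^{\log}_{\cD_e}$. That is precisely the normal bundle of the diagonal $\triangle$. The compatibility of obstruction theories then gives $[\mathcal{G}_\gamma]\vir = \triangle^![\bigtimes_v \mathcal{M}_v]\vir$ by functoriality of virtual pullback.

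For (3), the obstruction theory on $\mathcal{M}^\rig_\gamma$ is pulled back along the étale map $\mu$. Hence $\mu_*[\mathcal{M}^\rig_\gamma]\vir$ is $\deg(\mu)$ times $[\mathcal{G}_\gamma]\vir$, which by Step 2 is $\frac{1}{\ell(\gamma)}\prod_e w_e\,[\mathcal{G}_\gamma]\vir$.

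\textbf{Main obstacle.} I expect the hardest part to be making Step 1 work in the compactified, non-log-smooth setting. The difficulty is ensuring that the modifications $\cD_e$ exist so that the evaluation is combinatorially flat. This flatness is what makes the fibre square have the expected dimension and the Gysin pullback $\triangle^!$ behave well. My first attempt would be to choose simultaneous subdivisions of $\prod_e \Sigma(D_e)$ and of each $\Sigma(\mathcal{M}_v)$ that flatten the tropical evaluation maps, using the standard semistable-reduction argument for morphisms of cone complexes. I would then check that the lcm bookkeeping in Step 2 is unaffected by these subdivisions.
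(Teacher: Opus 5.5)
The paper does not prove this statement; it quotes it from Maulik--Ranganathan. Your outline (cut along the edges of $\gamma$, count log lifts at the reglued nodes, compare obstruction theories) has the right overall shape. However, Step 3 contains a genuine error, and it sits exactly where the content of (2) and (3) lies.

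Tensor the normalisation sequence with $f^*T^{\log}_{\tilde{\mathcal X}}$ (the pulled-back log tangent bundle of the expanded target, not that of the curve). The correction term at the node $q_e$ is then $T^{\log}_{\tilde{\mathcal X}}|_{f(q_e)}$, which has rank $3$. Along the double locus it is an extension of $T^{\log}_{D_e}$ by a trivial line, the logarithmic normal direction. The normal bundle of $\triangle$ accounts only for the rank-$2$ quotient.

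A dimension count shows the mismatch cannot be ignored. Write $n$ for the total number of markings. Then:
\begin{itemize}
\item $\sum_v \operatorname{vdim}\mathcal{M}_v = n+2|E|$ and $\operatorname{vdim}\mathcal{M}^\rig_\gamma = n$;
\item the Euler characteristics of the two obstruction theories differ by $3|E|$;
\item $\triangle$ has codimension $2|E|$.
\end{itemize}
The remaining $|E|$ is the difference in dimension between the base Artin stacks over which the obstruction theories are relative. On one side these are log curves of type $\gamma$ with maps to the Artin fan of the expansion; on the other, the product of the vertex stacks. The discrepancy is carried by the edge-length and normal-torsor data at each node.

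So there is no compatible triple of the kind you describe, and functoriality of virtual pullback cannot be invoked as stated. What is missing is a square at the level of these base stacks (or of the Artin fans of the $X_v$ and $\cD_e$) underlying the square that defines $\mathcal{G}_\gamma$. You then need to show that the trivial summands are absorbed by the relative tangent complex of the map between base stacks, so that the vertex obstruction theories induce exactly $\triangle^!$.

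This is also where combinatorial flatness of one evaluation map is used. It makes that evaluation integral, so the fs fibre product describing $\mathcal{M}^\rig_\gamma$ differs from the ordinary fibre product $\mathcal{G}_\gamma$ only by the saturation step. Analysing that step (finite, \'etale in characteristic $0$ here, of degree a lattice index) is what makes $\mu$ finite \'etale, not the observation that ``the choices are discrete''. Flatness is also what lets the square of base stacks have the expected dimension, so the Gysin maps commute.

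So the obstacle you single out, producing the $\cD_e$, is part of the set-up of the statement. The actual difficulty is the comparison across different base stacks.

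Note also that $\mathcal{G}_\gamma$ carries no obstruction theory until you give it one. The identity you actually have to prove is $[\mathcal{M}^\rig_\gamma]^{\mathrm{vir}} = \mu^*\triangle^![\prod_v\mathcal{M}_v]^{\mathrm{vir}}$, and (3) then follows from (1) by $\mu_*\mu^*=\deg\mu$.

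Finally, in Step 2 your count $\prod_e w_e/\ell(\gamma)$ is right, but the mechanism is not. The $\mu_{\mathrm{lcm}}$ identifications do not come from rescaling the target, which is rigidified in $\mathcal{M}^\rig_\gamma$. They come from the base log structure: automorphisms of the minimal base log structure over that of the degeneration, equivalently the saturation index above.
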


	\section{Well-spacedness} \label{sec:well-spaced}
	
	\subsection{Tropical well-spacedness}
	
	\begin{definition} \label{def:wellspaced}
		Let $h: \Gamma \rightarrow \RR^3$ be a parametrized tropical curve of genus 1, with cycle $\Gamma_0$. Then we say that $(h, \Gamma)$ is \emph{(tropically) well-spaced} if for every plane $H$ containing $h(\Gamma_0)$, either:
		\begin{enumerate}[(i)]
			\item there exists an open neighbourhood of $\Gamma_0$ whose image under $h$ is not contained in $H$; or 
			\item at the minimum distance where the tropical curve leaves the plane $H$, it leaves it along 3 edges. 
		\end{enumerate}
	\end{definition}

	\begin{example}
		\begin{enumerate}
			\item A non-superabundant tropical curve is well-spaced. 
			\item (Cycle spans a plane). See Figure \ref{fig:well-spacedness}. 
			\item (Cycle spans a line). Suppose we have a configuration as in Figure \ref{fig:well-spaced2}. Here $H_1, H_2$ are the planes spanned by the edges adjacent to each vertex, and $H_1 \neq H_2$. This tropical curve well-spaced, since for every plane containing the cycle, no open neighbourhood of the cycle is contained in that plane. 
			
			\begin{figure}[h]
				\centering
				\begin{tikzpicture}
					\fill (0,0) circle[radius=2pt];
					\fill (2,0) circle[radius=2pt];
					\draw (0, 0.05) -- (2, 0.05);
					\draw (0, -0.05) -- (2, -0.05);
					\draw (0, 0) -- (-1, 1); 
					\draw (0, 0) -- (-1, -1); 
					\draw (2, 0) -- (3.3, 0.8); 
					\draw (2, 0) -- (3.3, -0.8);
					
					\draw[forestgreen] (-0.7,-0.6) --(0.7, -0.6) --(0.7, 0.6) -- (-0.7, 0.6) -- cycle;
					
					\draw[coralpink] (1.2, -0.5) -- (1.5,0.5) --(3.1,0.5)--(2.8,-0.5) --cycle;
					
					\node[forestgreen] at (-1, 0.5) {$H_1$};
					\node[coralpink] at (3.3, -0.2) {$H_2$};
					
				\end{tikzpicture}
				\caption{A well-spaced tropical curve with $\Gamma_0$ spanning a line.}
				\label{fig:well-spaced2}
			\end{figure}
			
			\item (Cycle spans a line). Suppose we have a configuration as in Figure \ref{fig:well-spaced3}. For every plane $H$ containing the cycle $\Gamma_0$ that is not equal to $H_1$, no open neighborhood of the cycle is contained in $H$. So the only condition from well-spacedness comes from considering the plane $H_1$. This will by a condition on the edge length $L$. 
			 \begin{figure}[h]
				\centering
				\begin{tikzpicture}
					\fill (0,0) circle[radius=2pt];
					\fill (2,0) circle[radius=2pt];
					\fill (1,0) circle[radius=2pt];
					\draw (0, 0.05) -- (1, 0.05);
					\draw (0, -0.05) -- (1, -0.05);
					\draw (1, 0) -- (2,0);
					\draw (0, 0) -- (-1, 1); 
					\draw (0, 0) -- (-1, -1); 
					\draw (2, 0) -- (3.3, 0.8); 
					\draw (2, 0) -- (3.3, -0.8);
					
					\draw[forestgreen] (-0.7,-0.6) --(0.7, -0.6) --(0.7, 0.6) -- (-0.7, 0.6) -- cycle;
					
					\draw[coralpink] (1.2, -0.5) -- (1.5,0.5) --(3.1,0.5)--(2.8,-0.5) --cycle;
					
					\node[forestgreen] at (-1, 0.5) {$H_1$};
					\node[coralpink] at (3.3, -0.2) {$H_2$};
					\node at (1.5, -0.3) {$L$};
				\end{tikzpicture}
				\caption{A tropical curve with a single well-spaced condition.}
				\label{fig:well-spaced3}
			\end{figure}
		\end{enumerate}
		
	\end{example}
	
	\begin{remark} \label{rem:empty}
		Even if a tropical curve is tropically well-spaced, its corresponding logarithmic moduli space may be empty. For instance, Figure \ref{fig:well-spacedempty} shows a tropical curve of genus $1$ mapping to a line. It corresponds to a genus $1$ curve mapping with degree $1$ to a genus $0$ curve, which cannot happen. 
				\begin{figure}[h]
			\centering
			\begin{tikzpicture}
				\fill (0,0) circle[radius=2pt];
				\draw (0,0)--(2,0);
				\draw (-2,0)--(0,0);
				\draw[dashed] (0,0)  to[in=50,out=130,loop, style={min distance=24mm}] (0,0); 
				\draw[->] (0, -0.5) -- (0,-1);
				\draw (-2,-1.5) -- (2,-1.5);
				\node at (1.8, 0.5) {$1$};
				\node at (-1.8, 0.5) {$1$};
			\end{tikzpicture}
			\caption{Tropical curve mapping to $\mathbb{R}$ which is tropically well-spaced but not realizable.}
			\label{fig:well-spacedempty}
		\end{figure}

	\end{remark}
	
	The locus of tropical curves satisfying well-spacedness cuts out cones in $\mathcal{M}^{\trop}_{\Lambda}$ of the expected dimension.

	\subsection{Logarithmic well-spacedness}

	We review the construction of the moduli space of radially aligned well-spaced logarithmic curves constructed in \cite{RSW19II}. This is a logarithmically smooth compactification of $\mathcal{\overline{M}}_{1,n}(X)^\circ$, the interior of the main component of $\mathcal{\overline{M}}_{1,n}(X)$.

	Let $C \rightarrow S$ be a genus 1 logarithmic curve with tropicalization $\Gamma \rightarrow \mathbb{R}^3$. Let the logarithmic structure on $S$ be denoted by $M_S$.\footnote{For a picture, one should take $S = \mathbb{A}^1$ with usual logarithmic structure, and the edge lengths to be valued in $\mathbb{R}_{\geq 0}$.}  For each vertex $v \in V(\Gamma)$ the distance to the circuit $\Gamma_0$ gives a well-defined piecewise linear function, which we denote by $\lambda: V(\Gamma) \rightarrow \overline{M}_S$. 
	There is a partial ordering on $\overline{M}_S$, where $a \geq b$ if $a-b \in \overline{M}_S \subset \overline{M}_S^{gp}$.

	\begin{definition}
		A family of genus 1 logarithmic curves is \emph{radially aligned} if $\lambda(v)$ and $\lambda(w)$ are comparable for all geometric points of $s \in S$ and $v, w \in \Gamma_s$. 
	\end{definition}

	The moduli space of radially aligned logarithmic curves is a logarithmic modification of the moduli space of logarithmic curves, obtained by subdividing the associated cone complex along loci of tropical curves where $\lambda(v) = \lambda(w)$ for some vertices $v,w$. Well-spacedness will give a closed condition.
	
	Let $f: C \rightarrow Z$ be a map from a radially aligned logarithmic curve to a toric variety $X$ with dense torus $T$.

	\begin{definition}[Logarithmic well-spacedness]
		Let $X$ be a toric variety. A radial logarithmic map $f: C \rightarrow Z$ is \emph{well-spaced} if $f$ satisfies the \emph{factorization property} for all subtori $H$ of $T$. 
	\end{definition}

	The definition of the factorization property is given in \cite[Definition 3.4.1]{RSW19II}. We will not state it precisely here, and instead lay out some exposition. 
	
	Consider a genus $1$ logarithmic curve mapping to a toric variety $X$. Suppose that the tropicalization $f: \Gamma \rightarrow \mathbb{R}^r$ contracts the circuit $\Gamma_0$. Let $\delta_f$ be the function given as the minimum distance from from $\Gamma_0$ to where $\Gamma$ is not contracted. Contracting everything inside a radius of $\delta_f$ in $\Gamma$ (adding vertices as necessary) gives a new tropical curve with a genus $1$ vertex. This induces a logarithmic modification 
	$$\tau: \tilde{C} \rightarrow C$$
	and a contraction 
	$$\gamma: \tilde{C} \rightarrow \overline{C}$$
	to a curve with a Gorenstein elliptic singularity. We say that the map $f: C \rightarrow X$ has the \emph{factorization property} if the induced map $f: \tilde{C} \rightarrow X$ factors through $\overline{C}$. 
	
	\begin{remark}
		The factorization condition can be more explicitly stated as the following. Suppose that $\tilde{C}$ has components $\tilde{C}_i$ which are glued at points $p_i$, and that $\tilde{C}$ has an elliptic singularity at this glued point. The factorization condition is equivalent to requiring that the tangent vectors to $\tilde{C}_i$ at $p_i$ are linearly dependent. We refer to \cite[Section 3.2]{KS24} for details. 
	\end{remark}
	
	For each codimension 1 subtorus $H$ of $T$, there is an expansion $\tilde{X}$ of $X$ and a morphism from $\tilde{X}$ to a chain of $\mathbb{P}^1$s extending the quotient of $T$ by $H$, such that if the circuit of $C$ is contracted, it is contracted to the interior of one of the $\mathbb{P}^1$s. The \emph{well-spacedness} condition asks that for all codimension $1$ subtori, the induced morphism satisfies the factorization condition. 

	\begin{theorem} 
		\cite[Theorem B]{RSW19II}.
		Consider the following data as a moduli problem on logarithmic schemes:
		\begin{enumerate}
			\item a family of $n$-marked, radially aligned logarithmic curves $C \rightarrow S$; and 
			\item a logarithmic stable map $f: C \rightarrow X$ with contact order $\Lambda$,
			such that the map $f$ is well-spaced. 
		\end{enumerate}
		The moduli problem is represented by a proper and logarithmic smooth stack with logarithmic structure, denoted $\mathcal{W}_\Lambda(X)$. 
	\end{theorem}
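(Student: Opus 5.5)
The plan is to build $\mathcal{W}_\Lambda(X)$ in layers over a known logarithmically smooth base and to verify representability, log smoothness and properness separately. For the base I take the stack $\mathfrak{M}^{\mathrm{rad}}_{1,n}$ of radially aligned genus $1$ logarithmic curves. It is the logarithmic modification of $\mathfrak{M}_{1,n}$ obtained by subdividing its cone complex along the loci $\lambda(v)=\lambda(w)$ described above. Being a logarithmic modification of a log smooth stack, it is itself log smooth.

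Over this base, the space of logarithmic maps $f\colon C\to X$ with contact orders $\Lambda$ is representable by the usual Gross--Siebert/Abramovich--Chen arguments. Since $X$ is toric, $T^{\log}_X\cong \mathcal{O}_X\otimes N$. Hence a log map is the same as a piecewise linear function on the tropicalization with values in $N$, together with a trivialization of the associated $\mathcal{O}(-\alpha)$-torsor. The well-spaced locus is then cut out by the factorization property for each codimension $1$ subtorus $H$. For each such $H$ I pass to the expansion $\tilde X\to$ chain of $\mathbb{P}^1$'s, and to the modification $\tau\colon\tilde C\to C$ followed by the contraction $\tilde C\to\overline C$ to the Gorenstein elliptic curve of radius $\delta_f$. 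Since $\delta_f$ is a well-defined element of $\overline{M}_S$ precisely because the curve is radially aligned, this construction globalizes over the base. Factorization through $\overline C$ is then a closed condition: it says that the tangent vectors at the glued points are linearly dependent. Finitely many $H$ matter, namely those spanned by edge directions. Therefore $\mathcal{W}_\Lambda(X)$ is a closed substack of a representable stack, hence itself representable.

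For log smoothness I will work with the relative obstruction theory over $\mathfrak{M}^{\mathrm{rad}}_{1,n}$, whose obstruction space is $H^1(C,f^*T^{\log}_X)=H^1(C,\mathcal{O}_C)\otimes N\cong N$. The claim to prove is that, along the well-spaced locus, the factorization conditions cut exactly the directions in which this obstruction is nonzero. Tropically this means the well-spaced cones have the expected dimension, which was noted above. Algebraically, for each $H$ the composite map to $\mathbb{P}^1$ factors through $\overline C$. The component $N/H$ of the obstruction is then the image of $H^1(\overline C,\mathcal{O})$, and the elliptic singularity's dualizing sheaf shows that this image is killed once the tangency condition is imposed. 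I would first check this in the local model near a contracted circuit, where the map is $\tilde C\to\overline C\to\mathbb{G}_m$. There I would compute directly that the deformations of $(\overline C, g)$ are unobstructed, because $g$ extends over the elliptic singularity. I would then argue by induction over the radii $\lambda$ at which different subtori's factorization conditions kick in. This step, showing that the conditions for different $H$ are compatible and together make the space log smooth rather than merely of expected dimension, is the main obstacle. If the direct obstruction computation becomes unwieldy, the fallback is to exhibit $\mathcal{W}_\Lambda(X)$ as log étale over an explicit log smooth auxiliary stack of radially aligned curves decorated with tropical well-spaced data.

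Properness is checked with the valuative criterion. Separatedness is inherited from the ambient space of log maps. For existence, take a family over a punctured DVR whose general fiber is a smooth genus $1$ map, and use properness of the Abramovich--Chen--Gross--Siebert space to get a stable log limit. After base change, apply a radial alignment by subdividing and bubbling. Speyer's realizability theorem shows that the tropicalization of the limit is well-spaced. Finally, the logarithmic factorization property holds in the limit because it holds on the generic fiber and is a closed condition.
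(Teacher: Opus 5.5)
\textbf{Where the proposal breaks down.} The paper does not prove this statement; it quotes it from \cite[Theorem B]{RSW19II}. Your proposal therefore has to stand on its own, and the log smoothness step does not.

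You aim to show that the factorization conditions kill the obstructions in $H^1(C,\mathcal O_C)\otimes N$ relative to $\mathfrak M^{\mathrm{rad}}_{1,n}$. That proves too much:
\begin{itemize}
\item The relative tangent space is $H^0(C,\mathcal O_C)\otimes N\cong N$. Vanishing relative obstructions would therefore make $\mathcal W_\Lambda(X)$ log smooth of dimension $n+3$.
\item But $\mathcal W_\Lambda(X)$ has dimension $n$, since $\dim\mathfrak M_{1,n}=n$ and $\chi(\mathcal O_C\otimes N)=0$ in genus one.
\item Concretely, on the interior a smooth marked curve $(E,q_j)$ carries a map only when $\sum_j\langle m,c_j\rangle q_j\sim 0$ for all $m\in M$, with $c_j\in N$ the contact orders. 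That is a codimension-$3$ condition, and the maps then form a $T$-torsor.
\end{itemize}
So the relative obstructions are genuinely nonzero everywhere. This includes interior points, where $\overline C=C$ and no tangency condition is imposed.

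The positivity mechanism also does not transfer. For maps to $\mathbb P^r$, $H^1(\overline C,\overline f^{*}T_{\mathbb P^r})$ vanishes because $\overline f^{*}\mathcal O(1)$ is positive on the core. Here $T^{\log}_X$ is trivial, and $H^1(\overline C,\mathcal O_{\overline C})$ is one-dimensional for a Gorenstein genus one curve. No appeal to $\omega_{\overline C}$ makes it vanish.

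Your fallback fails for a related reason. Restricted to smooth domains, a map from $\mathcal W_\Lambda(X)$ to a stack of radially aligned curves with tropical data is strict. If it were log \'etale it would be \'etale there, yet its fibres contain $3$-dimensional $T$-orbits.

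\textbf{What actually has to be shown.} The needed statement concerns absolute obstructions. Along $\mathcal W_\Lambda(X)$, the log deformations of the marked radially aligned curve must surject, via the obstruction map, onto $H^1(C,\mathcal O_C)\otimes N$. In other words, the three Abel--Jacobi-type conditions must be log-transverse.
\begin{itemize}
\item Smoothing parameters of the loop reach only $H^1(C,\mathcal O_C)\otimes N_{\Gamma_0}$, where $N_{\Gamma_0}$ is spanned by the circuit directions.
\item The cokernel $H^1(C,\mathcal O_C)\otimes N/N_{\Gamma_0}$ is exactly the superabundant excess.
\item This cokernel must be accounted for by factorization through $\overline C_H$ for the subtori $H$ containing the circuit. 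Roughly, the degenerate condition is divisible by $t^{\delta_H}$, radial alignment lets you divide, and the residual condition at the elliptic singularity can be varied because at least three edges leave $H$.
\end{itemize}
This must then be done simultaneously for all such $H$, with their different radii $\delta_H$. That is precisely the compatibility you flag as the main obstacle and defer to an unspecified induction, so log smoothness is not established.

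\textbf{Smaller points on properness.}
\begin{itemize}
\item Speyer's theorem gives well-spaced $\Rightarrow$ realizable. For limits you need the converse.
\item Your claim that factorization is a closed condition carries all the weight, since it holds vacuously on a smooth generic fibre. The standard argument is the rigidity lemma on the total space of the one-parameter family. There $f$ is constant on the fibres of $\tau\colon\tilde{\mathcal C}\to\overline{\mathcal C}$, and $\tau_*\mathcal O_{\tilde{\mathcal C}}=\mathcal O_{\overline{\mathcal C}}$.
\end{itemize}
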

	
	In \cite[Section 4.6]{RSW19II}, it is shown that logarithmic well-spacedness condition is equivalent to the tropical well-spacedness condition.

	\begin{remark}
		While this space was defined in the collapsed logarithmic stable maps setting, it naturally caries over to the expanded setup. The resulting moduli space, which by abuse of notation we continue to call $\mathcal{W}_\Lambda$, is a logarithmic modification of the original. These spaces define the same invariants. See \cite[Remark 3.3.2]{RSW19II}.
	\end{remark}

	\subsection{Well-spaced invariants} \label{sec:winv}
	
	We define the well-spaced invariants. As before, fix integers $a,b$ such that 
	$$a+2b= \int_\beta c_1(T_X).$$ 
	Let $\varphi_1, \ldots, \varphi_a \in A^{n-1}(X)$ and $\varphi_{a+1}, \ldots, \varphi_{a+b} \in A^n(X)$ be cohomology classes on $X$.  There is an evaluation morphism 
	$$\ev_W: \mathcal{W}_\Lambda \rightarrow (\mathbb{P}^3)^n$$
	which can be obtained by blowing down the expanded targets.

	\begin{definition}
		For $a,b = (a,b)$ satisfying $a +2b = 4d$, the \emph{well-spaced invariants} are
		$$\Winv = \int_{[\mathcal{W}_{\Lambda}(X)]^{\vir}} \ev_{m+1}^*(\varphi_1) \cdots  \ev_{m+n}^*(\varphi_{a+b})$$
		with $\varphi_1, \ldots, \varphi_a$ classes in $A^{r - 1}(X)$ corresponding to tropical cycles $\ell_1^{\trop}, \ldots, \ell_a^{\trop}$, and point classes $\varphi_{a+1}, \ldots, \varphi_{a+b}$.
	\end{definition}

	\begin{remark} \label{rem: P3 enumerative}
		There is subtlety in what we mean by an ``enumerative'' count. Such a count is not always well-defined, since the pull back of classes may not intersect transversely. For $X = \mathbb{P}^3$, the number $n_{a,b}$ of genus $1$ curves through $a$ lines and $b$ points is well-defined \cite{Vakil97}. For generic point and line conditions, any curve that meets the conditions will have transverse intersection with the boundary. There are $(d!)^4$ ways to label the intersection points, and thus the corresponding well-spaced invariant satisfies $W_{a,b} = (d!)^4 \times n_{a,b}$.  
	\end{remark}

	Fix a well-spaced tropical type $\gamma$. In joint work with Cela \cite{CK26}, we define \emph{weights} $w_\gamma$ and prove the following tropical correspondence theorem for well-spaced curves.

	\begin{definition}
		Fix $a$ tropical line and $b$ point conditions in $\mathbb{R}^3$. The tropical well-spaced invariant $W_{a,b}$ is the weighted sum 
		$$\Winv^{\trop} = \sum_{\gamma} M_{\gamma} \cdot w_{\gamma}$$
		where the sum goes over the tropical types $\gamma$ of the well-spaced tropical curves meeting the point and line conditions.  
	\end{definition}

	\begin{theorem}\label{thmB} \cite[Theorem B]{CK26}
		The number $\Winv^{\trop}$ is well-defined and 
		$$\Winv= \Winv^{\trop}.$$
	\end{theorem}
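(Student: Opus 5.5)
The plan is to run the argument of Proposition \ref{prop:decomp} with $\mathcal{W}_\Lambda(X)$ in place of $\Mspace$. The essential difference is that $\mathcal{W}_\Lambda(X)$ is proper and logarithmically smooth \cite[Theorem B]{RSW19II}, so its virtual class is its fundamental class. The modifications needed in Proposition \ref{prop:decomp} for a space that is not logarithmically smooth should therefore become unnecessary. Concretely, I would take the evaluation morphism $\ev_W\colon \mathcal{W}_\Lambda \to X^n$ and subdivide both sides so that the induced map of Artin fans is flat and $(X^n)^\dagger$ is a smooth toric variety. I would then write $\Winv$ as the Fulton--Sturmfels sum (\ref{eq:sum}) of $m^0_{\sigma,\tau}\, c(\sigma)\, c(\tau)$, where $c(\sigma)$ now comes from $\ev^\dagger_*[\mathcal{W}_\Lambda^\dagger]$.

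Exactly as in step 3 of that proof, $c(\sigma)$ is computed by pulling back the piecewise polynomial $PD(\sigma)$ along the map of cone complexes $\Sigma(\mathcal{W}^\dagger_\Lambda)\to \Sigma((X^n)^\dagger)$. So only the top-dimensional cones of $\Sigma(\mathcal{W}_\Lambda)$ mapping onto $\sigma$ contribute. By \cite[Section 4.6]{RSW19II}, these cones are indexed by tropically well-spaced types. Because well-spacedness cuts out cones of the expected dimension, for a generic displacement $(v_i,u_j)$ the only types meeting the affine condition are rigid well-spaced types $\gamma$ whose cone $\sigma_\gamma$ maps isomorphically (rationally) onto its image. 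This identifies the index set of $\Winv^{\trop}$. The lattice factor splits as the determinant $m_\gamma$ of $N_{\sigma_\gamma}\to N_\sigma$ times $m^0_{\sigma,\tau}$, and their product is $M_\gamma$ by Definition \ref{def:totalmult}. One point to check here is that the radial-alignment subdivision does not alter this determinant. It should not, because radial alignment only subdivides cones and $M_\gamma$ is computed on the image in $\Ev'$.

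The remaining factor replaces $m^{\vir}_\gamma$. The stratum $\mathcal{W}_\gamma$ of a rigid type has dimension zero. Since $\mathcal{W}_\Lambda$ is logarithmically smooth, the strata are smooth and the intersection of the boundary divisors along $\sigma_\gamma$ is $\frac{1}{\operatorname{mult}(\sigma_\gamma)}[V(\sigma_\gamma)]$. The contribution is then the length of the zero-dimensional stratum, which is the number of logarithmic maps of type $\gamma$. This is the weight $w_\gamma$.

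I expect the main obstacle to be the superabundant (non-rigid, $e(\gamma)\ge 1$) cases. There one must show that, after imposing the well-spacedness and factorization conditions, the stratum is genuinely zero-dimensional and reduced, and that its point count is the combinatorial $w_\gamma$. I would first use Theorem \ref{thm:split} on the rigidified stratum to reduce to vertex moduli spaces, and then impose the factorization condition as linear dependence of tangent vectors at the glued points, as in the Remark after the definition of the factorization property. Finally, $\Winv^{\trop}$ is well-defined because $\Winv$ does not depend on the conditions, so the tropical sum is independent of the generic choice of conditions.
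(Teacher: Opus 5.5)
Your proposal is correct and follows the route the paper relies on. Theorem~\ref{thmB} is quoted from \cite{CK26}, and the paper says its proof is the Fulton--Sturmfels/Minkowski-weight argument of Proposition~\ref{prop:decomp}, there run on the proper, logarithmically smooth space $\mathcal{W}_\Lambda$; the paper notes Proposition~\ref{prop:decomp} needs modification only because $\mathcal{M}_\Lambda$ is not logarithmically smooth. So, as you say, each full-dimensional well-spaced cone meeting the conditions contributes $M_\gamma$ times the degree of its zero-dimensional stratum.

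The superabundant ``obstacle'' in your last paragraph is not needed for this statement. Logarithmic smoothness already makes those strata reduced and zero-dimensional, and $w_\gamma$ is defined as the number of logarithmic maps of type $\gamma$, so matching it to explicit combinatorial weights is the separate content of \cite[Theorem D]{CK26}. One small correction: when $\gamma$ is not rigid in $\mathcal{M}_\Lambda$, well-spacedness cuts out a proper subcone rather than a subdivision, so $M_\gamma$ must be computed on that subcone's lattice.
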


	While explicit combinatorial multiplicities are not necessary for Theorem \ref{thmA}, we note down the relevant cases to use in Section \ref{sec:examples}. As explained in \cite{CK26}, for explicit computations it is best to work with tropical curves in $\mathbb{R}^3$, forgetting the fan structure coming from $X$. These give \emph{different} weights (not $w_{\gamma}$). 
	
	\begin{definition}
		Let $e_1, \ldots, e_t$ be edges forming the cycle in $\gamma$, with directions $u_i$. Consider the map 
		$$\mathbb{Z}^t \rightarrow \mathbb{Z}^3 \cap \operatorname{Span}_{\mathbb{R}}(h(\gamma_0))$$
		defined by $(\ell_1, \ldots, \ell_t) \mapsto \sum \ell_i u_i$. The \emph{loop multiplicity} is the index of this map, i.e. the index of the image of $\mathbb{Z}^t$ inside  $\mathbb{Z}^3 \cap \operatorname{Span}_{\mathbb{R}}(h(\gamma_0))$. 
	\end{definition}
	
	For well-spaced tropical curves in $\mathbb{R}^3$, the relevant multiplicities (specialized from \cite[Theorem D]{CK26}) are given as follows. 
	\begin{enumerate}
		\item (Excess dimension 0.) The multiplicity is the loop multiplicity. 
		\item (Excess dimension 1.) The cycle is contained in a plane $H$, but not in a line. 
		\begin{enumerate}
			\item Suppose the neighborhood of the cycle is also contained in $H$. Then the weight of $\gamma$ is the loop multiplicity.
			\item Suppose $\gamma$ has exactly one four valent vertex $v_1$ in the cycle, whose neighborhood is not contained in a plane $H$. Let $\pm u$ be the direction of the edges adjacent to $v_1$ not in the cycle (directing the edges away from $v_1$). With a change of coordinates, we can assume that the plane $H$ is parallel to the $x$-$y$ plane. Let the absolute value of the $z$-coordinate of $u_e$ be $a$. The weight of $\gamma$ is 
			$$ \text{loop multiplicity} \times ( a - 1).$$
		\end{enumerate}
		
		\item (Excess dimension 2.) The cycle is contained in a line $L$, and must consist of two vertices $v_1, v_2$ joined by two lines, possibly with bivalent vertices and marked points in the middle.
		\begin{enumerate}
			\item Suppose $v_1, v_2$ are both four valent. Let $\pm u_1, \pm u_2$ be directions of the adjacent edges to $v_1$, resp. $v_2$ \emph{not} in the cycle. Let $\bar{u}_i$ be the image of $u_i$ under $\mathbb{Z}^3 \rightarrow \mathbb{Z}^3/(\mathbb{Z}^3 \cap L)$. Let $i(P)$ be the  Then the number of interior lattice points in the parallelogram spanned by $\bar{u}_1, \bar{u}_2$. The multiplicity is 
			$$\frac{1}{\Aut(\gamma)} \times \text{loop multiplicity} \times  i(P).$$
			\item Suppose $v_1$ is four valent and $v_2$ is trivalent. Let $\pm u$ be the direction of the edges adjacent to $v_1$ not in the cycle. Let $\bar{u}$ be the image of $u$ under the quotient $\mathbb{Z}^3 \rightarrow \mathbb{Z}^3/(\mathbb{Z}^3 \cap L)$ and let $a$ be the weight of $\bar{u}$. The multiplicity is 
			$$\frac{1}{\Aut(\gamma)} \times \text{loop multiplicity} \times  (a-1).$$
		\end{enumerate}
		
		\item (Excess dimension 3.) The multiplicity is $1/\Aut(\gamma)$. 

	\end{enumerate}
	
	By the results of section \ref{sec:calculations}, this covers all cases.

	\section{Tropical contributions and virtual multiplicities} \label{sec:calculations}
	
	In this section, we find all contributing tropical curves for sufficiently general point conditions, and prove Theorem \ref{thmA}.  The structure of this section is as follows: we start with some setup in Section \ref{sec:generalconditions}. In Section \ref{sec:nonsupcont}, we show that the contributions of non-superabundant curves of the well-spaced and logarithmic Gromov-Witten count coincide. In Section \ref{sec:excess1}, we show that for tropical types $\gamma$ of excess dimension 1, the logarithmic Gromov--Witten count vanishes (while well-spaced contributions may exist). In Section \ref{sec:exdim2}, we study tropical types $\gamma$ of excess dimension 2, and show that again there are well-spaced curves which are not rigid as parametrized tropical curves and contribute to the well-spaced count, while the logarithmic Gromov--Witten contribution vanishes. In Section \ref{sec:exdim3}, we show that the well-spaced contributions vanish and the logarithmic Gromov--Witten contributions sum to $E_0$, as in Theorem \ref{thmA}. This concludes the proof.

	\subsection{Generality conditions} \label{sec:generalconditions}
	
	Recall the definition of an affine condition in Section \ref{sec:decomposition}. There are several generality conditions we wish to impose. We make this precise. 
	
	\begin{definition}
		A \emph{general condition} is a dense open set of the space of affine conditions $(\ell_i, p_j)$. 
	\end{definition}
	
	We can intersect finitely many general conditions to obtain a general condition.

	\begin{definition}
		Let $h: \Gamma \rightarrow \RR^3$ be a tropical curve passing through affine conditions $(\ell_i, p_j)$. Say this configuration $(h, \ell_i, p_j)$ is \emph{general} if there exists a neighbourhood of the condition space $U \subset (\RR^3)^{a+b}$ (the first $a$ coordinates mark the vertex of the tropical line), such that for any $(\ell_i', p_j') \in U$, there is a deformation of $h$ preserving the tropical type which meets it.   
	\end{definition}
	
	\begin{definition}
		An affine condition $(\ell_i, p_j)$ is \emph{general for tropical type} if for every tropical curve $(h, \Gamma)$ which meets it, $(h, \ell_i, p_j)$ is a general configuration. 
	\end{definition}
	
	In terms of maps of cone complexes (as in Section \ref{sec:decomposition}), we are choosing points and lines such that the image of the cone mapping to them meets the conditions on its interior. The failure of this condition has positive codimension, so this forms a dense open set. Hence, being general for tropical type defines a general condition. 
	
	There are many immediate consequences for choosing an affine constraint which is general for tropical type. To demonstrate, suppose that $(h, p_i)$ is a general configuration (here, $a = 0$). Then two points cannot lie on an edge sharing a vertex. Indeed, such points would be constrained to lie in a given plane, which is not general.

	\begin{definition}
		Let $h: \Gamma \rightarrow \RR^3$ be a tropical curve passing through affine conditions $(\ell_i, p_j)$. An edge $E$ of $\Gamma$ is \emph{fixed by the conditions} if for every deformation of $h$ meeting the conditions and fixing the tropical type, the length $\ell(E)$ of $h(E)$ is fixed. 
	\end{definition}

	\begin{definition}
		An affine condition $(\ell_i, p_j)$ is \emph{general for edge lengths} if for every tropical curve $(h, \Gamma)$ which meets it, the following condition holds: for any two distinct collections of fixed edges $\mathcal{E}$ and $\mathcal{F}$, have
		$$\sum_{E \in \cE} \ell(E) \cdot u_E \neq \sum_{F \in \cF} \ell(F) \cdot u_F.$$
		
	\end{definition}
	
	\begin{example} 
		The tropical curve shown in Figure \ref{fig:general-edge-length} has two fixed edges, as indicated. They are different lengths and the configuration is general for edge lengths. The vertex at the centre may be moved along the edge. Although this may cause the paths to the two marked points to have the same length, we do not count this, since such paths contain edges which are not fixed. 
		
		\begin{figure}[h]
			\centering
			\begin{tikzpicture}
				\draw (0, 0) -- (2, 0);
				\draw (0, 0) -- (-1, 1); 
				\draw[thick, forestgreen] (0, 0) -- (-0.3, -0.3); 
				\draw (-0.3, -0.3) -- (-1, -1); 
				\draw[thick, forestgreen] (2, 0) -- (2.7, 0.7); 
				\draw (2.7, 0.7) -- (3,1);
				\draw (2, 0) -- (3, -1);
				\draw [coralpink] plot [only marks, mark=square*] coordinates {(2.7,0.7) (-0.3, -0.3)};
				\fill (0,0) circle[radius=2pt];
				\fill (2,0) circle[radius=2pt];
				\fill (1,0) circle[radius=2pt];
			\end{tikzpicture}
			\caption{Tropical curve with general edge lengths. Two point conditions are given by the pink squares. The fixed edges are indicated in green.}
			\label{fig:general-edge-length}
		\end{figure}

	\end{example}

	\begin{lemma}
		Generality for edge lengths is a general condition. 
	\end{lemma}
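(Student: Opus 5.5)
We need to show that the set of affine conditions that are general for edge lengths contains a dense open set. The plan is to intersect with the general condition ``general for tropical type'' and then show that the bad locus is a finite union of proper, closed (polyhedral) subsets of the space of affine conditions. Denote that space by $\mathcal{C} \subset (\mathbb{R}^3)^{a+b}$.

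\textbf{Step 1: reduce to finitely many types.} Fix the degree and genus. By the construction of $\mathcal{M}_\Lambda^{\trop}$ as a cone complex, there are only finitely many combinatorial types $\gamma$. For each $\gamma$ there is a linear evaluation map $\ev_\gamma\colon \sigma_\gamma \to (\mathbb{R}^3)^{a+b}$. We may first restrict to affine conditions that are general for tropical type, which is already a general condition. For such a condition, every tropical curve meeting it lies in the relative interior of some cone $\sigma_\gamma$ whose image under $\ev_\gamma$ contains a neighbourhood of the condition. For each $\gamma$, the locus of curves meeting a condition $A$ is then the fibre $F_A = \ev_\gamma^{-1}(A) \cap \sigma_\gamma^\circ$. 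This is a relatively open polyhedron, whose direction space is $\ker(\ev_\gamma)$, and this space is independent of $A$.

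\textbf{Step 2: fixed edges are determined linearly by the conditions.} By definition, an edge $E$ is fixed exactly when the linear functional $\ell(E)$ is constant on $F_A$, that is, when $\ell(E)$ vanishes on $\ker(\ev_\gamma)$. So the set of fixed edges depends only on $\gamma$, not on $A$. In that case $\ell(E)$ factors through $\ev_\gamma$: there is a linear (or affine) function $L_E$ on the image of $\ev_\gamma$ with $\ell(E) = L_E(A)$ for every curve of type $\gamma$ meeting $A$.

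For two distinct collections $\mathcal{E} \neq \mathcal{F}$ of fixed edges, the required inequality therefore becomes
$$\Phi_{\mathcal{E},\mathcal{F}}(A) := \sum_{E \in \mathcal{E}} L_E(A)\, u_E - \sum_{F \in \mathcal{F}} L_F(A)\, u_F \neq 0,$$
where $\Phi_{\mathcal{E},\mathcal{F}}$ is a linear map from the span of the image of $\ev_\gamma$ to $\mathbb{R}^3$. Its zero locus is a linear subspace. There are finitely many types and finitely many pairs $(\mathcal{E},\mathcal{F})$, so the union of these zero loci is closed. Provided each zero locus is proper, the union is nowhere dense, and its complement, intersected with the general-for-type locus, is dense and open.

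\textbf{Step 3: each zero locus is proper (main obstacle).} It remains to show that $\Phi_{\mathcal{E},\mathcal{F}}$ is not identically zero on the image of $\ev_\gamma$. I would first take an edge $E_0$ lying in the symmetric difference of $\mathcal{E}$ and $\mathcal{F}$, and then vary $A$ so that $L_{E_0}$ changes while the contributions of the other edges cannot cancel it. Because the $u_E$ may be parallel, I would argue directly on the tropical curve rather than through the linear algebra. A fixed edge lies on a path pinned between constraints, so perturbing a single point condition $p_j$ (or the vertex of a line $\ell_i$) in a generic direction changes the lengths of the fixed edges on the path from that constraint to the rest of the curve, and these changes come with a generic displacement vector. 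Since $\mathcal{E} \neq \mathcal{F}$ are sets of edges, a generic perturbation of the constraint adjacent to $E_0$ changes the two sides by different amounts. If some collection nonetheless cancels identically, then the two sides correspond to edge paths with the same displacement for every $A$. I expect such cases either to force $\mathcal{E} = \mathcal{F}$ or to be excluded because they give a non-rigid, non-general configuration. This combinatorial check is the step I expect to be most delicate. A fallback is to weaken the inequality to hold only for collections whose displacement genuinely depends on $A$, which is all that later arguments use.
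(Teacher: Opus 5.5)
\textbf{There is a genuine gap at Step 3, and the properness you expect there is false for some pairs.} Your Steps 1 and 2 are the same reduction that the paper's one-line proof takes for granted; the paper simply asserts that each equality cuts out a codimension-one subspace of the relevant cone. These steps are fine, apart from one cosmetic fix: with line conditions, the fibre should be taken for $\ev_\gamma$ composed with the projection to $\operatorname{Ev}'$ of \eqref{eqn:ev'}, not $\ev_\gamma^{-1}(A)$.

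All the content is in Step 3, which you leave open. On a genus-one type, the cycle edges satisfy the vector identity $\sum_{E\subseteq\Gamma_0}\epsilon_E\,\ell(E)\,u_E=0$, where $\epsilon_E=\pm1$ records orientation. On a rigid type every edge is fixed. Taking $\mathcal{E}=\{E:\epsilon_E=1\}$ and $\mathcal{F}=\{E:\epsilon_E=-1\}$ gives distinct collections with $\Phi_{\mathcal{E},\mathcal{F}}\equiv 0$ for every affine condition. Concretely, for the rigid double-edge type on the right of Figure~\ref{fig:twochoices}, $\ell(E_1)u_{E_1}=h(v_2)-h(v_1)=\ell(E_2)u_{E_2}$ identically. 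The same happens for every non-superabundant genus-one curve. These configurations are rigid and occur for general conditions, so neither of your escape routes applies: it is not true that such cases ``force $\mathcal{E}=\mathcal{F}$'', nor that they are ``non-rigid, non-general''. No perturbation argument can give properness for these pairs. Taken literally, the lemma (and the paper's codimension-one assertion) fails for them.

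\textbf{How to close the argument.} What is true is your fallback, made precise, and the linear algebra you wanted to avoid handles it cleanly if you test $\Phi_{\mathcal{E},\mathcal{F}}$ on $\sigma_\gamma$ rather than on the space of conditions.
\begin{itemize}
\item For tropical curves in $\mathbb{R}^3$, the span of $\sigma_\gamma$ has as coordinates a root-vertex position and the bounded edge lengths, subject only to the cycle relation. In particular, lengths of edges off $\Gamma_0$ are free coordinates, whether or not their directions are parallel.
\item If $\mathcal{E}\triangle\mathcal{F}$ contains an edge $E_0$ not on the cycle, then $\sum_{\mathcal{E}}\ell(E)u_E-\sum_{\mathcal{F}}\ell(F)u_F$ has coefficient $\pm u_{E_0}\neq0$ on the free coordinate $\ell(E_0)$, so it is nonzero on $\sigma_\gamma$.
\item By your Step 2 this function equals $\Phi_{\mathcal{E},\mathcal{F}}$ composed with the evaluation to $\operatorname{Ev}'$. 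Whenever $\gamma$ meets conditions general for type, the image of $\sigma_\gamma$ in $\operatorname{Ev}'$ contains an open set. Hence the zero locus in condition space is a proper linear subspace, and your finite-union argument finishes.
\end{itemize}
So the correct lemma asks for the inequality only for pairs with $\Phi_{\mathcal{E},\mathcal{F}}\not\equiv0$, for instance whenever $\mathcal{E}\triangle\mathcal{F}\not\subseteq\Gamma_0$. This is all the later arguments use: they compare path lengths in the trees hanging off the cycle, or edge lengths of the genus-zero image curve, where there is no cycle relation at all.
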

	
	\begin{proof}
		The equalities above define codimension one subspaces of the relevant cone in the moduli space of tropical maps. 
	\end{proof}
	
	We will now assume that our affine conditions are general for tropical type and edge lengths.

	\subsection{Non-superabundant contributions} \label{sec:nonsupcont}
		
	\begin{proposition} \label{prop:nonsup}
		Suppose $\gamma$ is a non-superabundant tropical type. Then the logarithmic Gromov--Witten and well-spaced contributions are equal. 
	\end{proposition}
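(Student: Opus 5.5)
Both invariants decompose over rigid tropical types with the same combinatorial factor $M_\gamma$. For $\Linv$ this is Proposition \ref{prop:decomp}; for $\Winv$ it is Theorem \ref{thmB}. The first step is therefore to reduce the statement to the equality $m_\gamma^{\vir} = w_\gamma$ for a non-superabundant type $\gamma$ that is rigid for a general affine condition. Such a $\gamma$ is automatically well-spaced, since the well-spacedness conditions only concern planes containing a superabundant circuit. Hence $\gamma$ indexes a stratum both in $\Mspace$ and in $\Wspace$. Because $e(\gamma)=0$, the cone $\sigma_\gamma$ has the expected dimension. I expect the radial-alignment subdivision to leave it undivided, or at least not to affect the count, because by generality for edge lengths no two vertices at fixed distance from the circuit are identified. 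Under these conditions the logarithmic modification $\Wspace \to \Mspace$ should be an isomorphism over a neighbourhood of the generic point of $\mathcal{M}_\gamma$.

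The second step is to show that $\mathcal{M}_\gamma$ is logarithmically unobstructed, so that $m_\gamma^{\vir}$ is an honest count. I would use the rigidification of Section \ref{sec:degenvsstrata} together with Theorem \ref{thm:split}. This writes $[\mathcal{M}_\gamma^{\rig}]^{\vir}$, up to the factor $\frac{1}{\ell(\gamma)}\prod w_e$, as $\triangle^!$ applied to the product of the vertex classes $[\mathcal{M}_v]^{\vir}$. For a rigid type through general conditions every vertex has genus $0$ and is trivalent, or four-valent where a line or point condition sits. Each $\mathcal{M}_v$ is then a genus-zero space of maps to the toric variety $X_v$ with maximal tangency to the toric boundary. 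Such a space is logarithmically smooth of expected dimension (a torus orbit of maps, up to rubber), so its virtual class is its fundamental class.

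The key point, and the main obstacle, is that the diagonal pullback is transverse. Non-superabundance says exactly that the tropical gluing map $\prod_v \operatorname{Def}(v) \to \prod_e N/N_e$ has the expected rank. I would compute its torus analogue: on the interiors, each $\mathcal{M}_v^\circ$ is a torsor under $T/T_v$, and the edge evaluations land in $D_e^\circ \cong T/\mathbb{G}_m^{u_e}$. Transversality then becomes surjectivity of the character-lattice map $\bigoplus_v N_v \to \bigoplus_e N/\mathbb{Z}u_e$, tensored with $\mathbb{G}_m$. This map is surjective with finite cokernel precisely when $h_1$ contributes no excess, that is, when $e(\gamma)=0$. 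It follows that $\mathcal{G}_\gamma$ is a finite reduced set of points near the interior, and there is no boundary contribution in the compactified $\cD_e$.

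The cardinality of $\mathcal{G}_\gamma$ is the order of the torsion in the cokernel, which gives the loop multiplicity. Combining this with the degree of $\mu$ and the rigidifying insertion of \cite{MR25}, one gets $m_\gamma^{\vir}$ equal to the number of logarithmic maps of type $\gamma$. On the well-spaced side, $w_\gamma$ is by definition the number of logarithmic curves of type $\gamma$ in $\Wspace$. That space is logarithmically smooth and agrees with $\Mspace$ along this stratum, so the two counts coincide and the proposition follows.
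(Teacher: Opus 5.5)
Your proposal follows essentially the paper's argument. At a non-superabundant stratum the well-spacedness condition is vacuous, and passing from $\Mspace$ to $\Wspace$ does not change the stratum's contribution; that is all the paper's two-line proof asserts. Your gluing-formula step only makes explicit the unobstructedness (so that $m_\gamma^{\vir}$ is an honest count), which the paper leaves implicit.

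Two small corrections to that extra material:

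1. Radial alignment can genuinely subdivide $\sigma_\gamma$, so the modification is not an isomorphism near $\mathcal{M}_\gamma$. Generality of edge lengths only keeps the curve off the walls. This is harmless, because the maximal subcones carry the same lattice and logarithmic modifications preserve virtual classes.
2. The interior of $\mathcal{G}_\gamma$ is not a finite set. It is a finite union of torsors under a torus of dimension $\dim\sigma_\gamma$, and becomes finite only after the rigidifying insertion.
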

	
	\begin{proof}
		 To obtain the well-spaced count, we perform a logarithmic modification and impose the well-spacedness constraint, and evaluate the (virtual) fundamental class of the stratum. If a stata corresponds to a non-superabundant tropical curve, then there is no modification or conditions, and the invariants given are the same.
	\end{proof}

	\subsection{Excess dimension 1} \label{sec:excess1}
	
	Superabundant curves cannot be avoided via the generality of the point and line conditions, as demonstrated in Figure \ref{fig:supabgenpos}. Here there are 14 points. A consequence of Corollary \ref{cor:logandordinary} is that for any 14 points, there are superabundant curves meeting them.

	\begin{figure}[h]
		\center
		\includegraphics[width=0.7\textwidth]{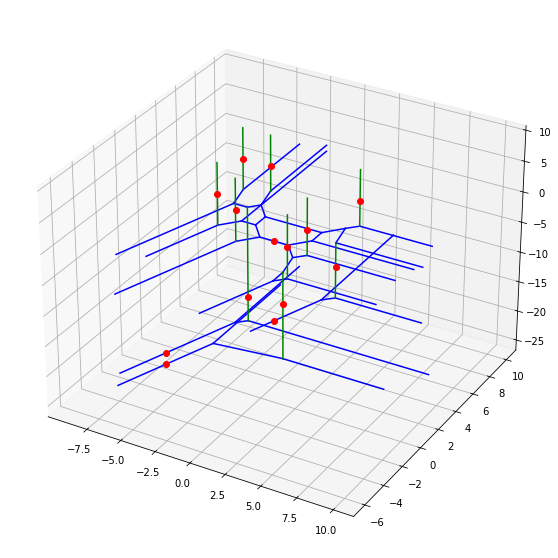}
		\caption{A general configuration with a floor decomposed superabundant tropical curve.}
		\label{fig:supabgenpos}
	\end{figure}
	
	When there are superabundant curves meeting the point conditions, we obtain a higher dimensional family of tropical curves which meet the conditions. These families can be complicated, containing several tropical types.

	We start with a vanishing statement. 
	
	\begin{lemma} \label{lem:vanish}
		Suppose that $\gamma$ is a tropical type of genus one which is rigid for the affine conditions such that a neighbourhood of the cycle is contained in a plane $H$. Suppose also that the strata corresponding to $\gamma$ has excess dimension $1$. 
		Then the virtual multiplicity, and hence the logarithmic Gromov--Witten contribution vanishes.
	\end{lemma}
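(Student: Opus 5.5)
We would show that the virtual class of the rigidified stratum $\mathcal{M}_\gamma^{\rig}$ vanishes. By Corollary \ref{cor:vanishing} this gives $m_\gamma^{\vir}=0$, and so by Proposition \ref{prop:decomp} the contribution $M_\gamma m_\gamma^{\vir}$ is zero.

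\textbf{Step 1: reduce to a vertex-level class.} Apply the splitting and gluing formalism of Theorem \ref{thm:split}. Up to the nonzero factor $\frac{1}{\ell(\gamma)}\prod_e w_e$, the pushforward $\mu_*[\mathcal{M}_\gamma^\rig]^{\vir}$ equals
$$\triangle^![\bigtimes_v \mathcal{M}_v]^{\vir}.$$
Since $\mu$ is finite étale, vanishing of $[\mathcal{G}_\gamma]^{\vir}$ gives vanishing upstairs. It therefore suffices to show that the diagonal pullback $\triangle^!$ of the product of vertex virtual classes is zero.

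\textbf{Step 2: find the trivial factor.} The hypothesis that a neighbourhood of the cycle lies in the plane $H$ means the following. For each vertex $v$ on the cycle, all edges at $v$ lie in $H$, so $X_v$ is a product of a toric surface with $\mathbb{P}^1$ (or a toric variety whose fan splits off the direction normal to $H$). As in Example \ref{exa:expansions}, the coordinate $h$ normal to $H$ is constant on each cycle component, $h_i=c_i$. Using this, we would split off from the gluing diagram a one-dimensional trivial factor. This factor comes from the character $m\in M$ vanishing on $H$. Gluing around the cycle imposes $c_1=c_2=\cdots=c_t$, which is one equation per cycle edge in that coordinate. Because the cycle is closed, these equations have one redundancy: the product of the gluing conditions for the $H$-normal coordinate around the loop is automatically satisfied. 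Equivalently, the excess-dimension-$1$ statement is that the diagonal $\triangle$ meets the image of $\bigtimes_v\mathcal{M}_v$ non-transversally, with a one-dimensional excess obstruction bundle in this normal direction.

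\textbf{Step 3: show the excess bundle is trivial.} Use the excess intersection formula. The class $\triangle^!$ equals the refined intersection on the actual (one dimension too large) intersection, capped with the top Chern class of the excess bundle. This excess bundle is the cokernel of the map $\bigoplus_{v}\mathfrak{t}_H\to\bigoplus_{e\in\text{cycle}}\mathfrak{t}_H$, where $\mathfrak{t}_H=\operatorname{Lie}(T/H)$ is the Lie algebra of the one-dimensional torus quotient. It is the first cohomology of the cycle with coefficients in $\mathfrak{t}_H$, a trivial line bundle $\mathcal{O}$. Its Euler class is zero, so the class vanishes.

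\textbf{Main obstacle.} The hardest step is making Step 3 rigorous within the logarithmic gluing of \cite{MR24}. The targets are the modified $\cD_e$ rather than $D_e$, and the vertex spaces are virtual rather than smooth. I would first try to factor the evaluation to the normal torus coordinate through the tropical/Artin-fan level. Concretely, I would show that the composite $\bigtimes_v\mathcal{M}_v\to(\bigtimes_e\cD_e)^2\to(\mathbb{G}_m)^{2t}$ (taking the $T/H$ coordinate) factors through the loop relation. The diagonal then decomposes as a diagonal in the remaining factors followed by a diagonal for $\mathbb{G}_m^{t}$ in $\mathbb{G}_m^{2t}$ restricted to a locus where one equation is redundant. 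Gysin pullback along a trivial-bundle section then gives zero. Alternatively, one could produce a nowhere-vanishing cosection of the obstruction theory of $\mathcal{M}_\gamma^\rig$ induced by this $H^1(\Gamma_0,\mathfrak{t}_H)$ direction; cosection localisation would then force the virtual class to vanish, since the degeneracy locus is empty.
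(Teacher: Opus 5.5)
Your Steps 1--3 reproduce the mechanism of the paper's proof. Cycle components have no degree normal to $H$, their heights are matched along the loop with one redundant equation, and the excess line bundle is the pullback of $\operatorname{Lie}(\mathbb{G}_m)$, hence trivial. The paper, however, states this only on the interior of $\mathcal{G}_\gamma$, and your argument is likewise valid only there. The step you defer as the ``main obstacle'' is where the proof actually happens, and the route you sketch for it does not work as written.

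\begin{itemize}
\item The normal coordinate $T\to T/T_H\cong\mathbb{G}_m$ exists only on the interior. At the boundary of $\mathcal{M}_v$ and of $\mathcal{D}_e$, the cycle components can lie wholly in expansions, at height $0$ or $\infty$. So there is no composite $\prod_v\mathcal{M}_v\to(\mathbb{G}_m)^{2t}$.
\item Once that coordinate is compactified, the normal direction of the diagonal is an ordinary tangent bundle, which is trivial only logarithmically. The ordinary excess intersection formula then no longer gives you the Euler class of a trivial bundle.
\item That formula also needs $\mathcal{G}_\gamma\to\prod_v\mathcal{M}_v$ to be a regular embedding with the predicted normal bundle. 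You have not established this for spaces carrying only virtual classes.
\item The cosection alternative carries the same burden. You would need compatibility with the logarithmic obstruction theory and surjectivity over the whole boundary of $\mathcal{M}_\gamma^{\mathrm{rig}}$.
\end{itemize}

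The paper gets around this tropically, in three steps.
\begin{itemize}
\item Since $(\prod_e\mathcal{D}_e)^2$ is toric, the degree of $\triangle^![\prod_v\mathcal{M}_v]^{\mathrm{vir}}$ can be computed with Minkowski weights. It is the Fulton--Sturmfels product of the pushforward of $[\prod_v\mathcal{M}_v]^{\mathrm{vir}}$ with the diagonal, evaluated using a generic translate of the diagonal.
\item What persists at the boundary is a support statement. Take a cycle vertex $v$ with cycle edges $e,e'$. Every tropical curve parametrised by $\mathcal{M}_v$ has genus $0$ and all ends parallel to $H$, so by balancing it lies in a single translate of $H$. Hence the tropicalization of $\mathcal{M}_v\to D_e\times D_{e'}$ lands in $\{z_e=z_{e'}\}$.
\item Chain these equalities around the loop. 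A translate of the tropical diagonal by $w$ can meet the support only if the $z$-components of $w$ along the cycle edges sum to zero (with orientation around the loop). This fails for generic $w$, so the product vanishes, and Corollary~\ref{cor:vanishing} gives $m_\gamma^{\mathrm{vir}}=0$.
\end{itemize}

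This generic displacement is the tropical incarnation of a nowhere-vanishing section of your trivial excess bundle. It holds uniformly, including where components fall into expansions. Substituting it for your $(\mathbb{G}_m)^{2t}$ factorization completes your argument.
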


	\begin{proof}
		Without loss of generality, let the plane $H$ be the $x$-$y$ plane. We have the following gluing diagram:
		\begin{center}
	\begin{tikzcd}
		\mathcal{G}_\gamma \arrow[d] \arrow[r]               & \bigtimes_{v \in V(G)} \mathcal{M}_v \arrow[d, "ev"] \\
		\bigtimes_e \cD_e \arrow[r, "\triangle"] & \big(\bigtimes_e \cD_e \big)^2                        
	\end{tikzcd}
	\end{center}
		where $G$ is the underlying graph of $\gamma$. 
		
		On the interior of $\mathcal{G}_\gamma$, the excess dimension is given by a degenerate gluing condition around the loop. Since an open neighbourhood of the cycle of the tropical curve is contained in a plane, it follows that the corresponding components of the logarithmic curve has no degree in the third $\mathbb{C}^*$ direction. Hence these components are contained at a fixed height (which may be wholly in expansions). The heights are matched along each edge in the loop, but the last matching is trivial, and this gives one excess dimension. By assumption, this is the only excess dimension. The excess bundle is the pullback of the tangent space to $\mathbb{C}^*$, coming from this direction, and hence is trivial. 
		
		The result is obtained by extending this argument to cycles. We want to show that 
		$$\triangle^![\bigtimes_{v \in V(G)} \mathcal{M}_v]$$ vanishes. 
		
		We may use Minkowski weights to calculate intersections in $\big(\bigtimes_e \cD_e \big)^2$. For a vertex $v$ in the cycle, suppose $e_1, e_2$ are the adjacent edges that are also contained in the cycle. The tropicalization of $\mathcal{M}_{v} \rightarrow D_{e_1} \times D_{e_2}$ has image contained in the set
		$$A = \{((x_1,y_1, z_1), (x_2,y_2, z_2)) | z_1 = z_2\} \subset (\mathbb{R}^3)^2.$$
		Thus the tropical image of $\bigtimes_{v \in V(G)} \mathcal{M}_v \rightarrow (\bigtimes_e \mathcal{D}_e)^2$ can be generically displaced so that it does not meet the tropical image of $\triangle$. Thus the intersection in $\big(\bigtimes_e \cD_e \big)^2$ vanishes. The result follows from applying Corollary \ref{cor:vanishing}. 
	\end{proof}

	The dimension of the tropical moduli space is at least the dimension of the strata and so the dimension condition holds if the tropical excess dimension is $1$.

	Finally, consider the case where the neighbourhood of the cycle spans (i.e. there is a four valent vertex on the cycle whose neighbourhood spans $\mathbb{R}^3$). In this case, the curve is well-spaced. The contributions are the same:

	\begin{lemma} \label{lem:neighbourhoodspans}
		Suppose that $(h, \Gamma)$ is a parametrized tropical curve of combinatorial type $\gamma$ meeting $a$ line and $b$ fixed point conditions, where the neighbourhood of the cycle $\Gamma_0$ spans $\mathbb{R}^3$. Then the logarithmic and well-spaced contributions are equal. 
	\end{lemma}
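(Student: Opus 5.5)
The strategy is to show that, in this situation, the excess dimension of the stratum $\mathcal{M}_\gamma$ is removed by the point and line conditions. The stratum is then effectively non-superabundant for the counting problem, and the argument of Proposition \ref{prop:nonsup} applies.

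First I would set up the geometry. The cycle $h(\Gamma_0)$ lies in a plane $H$, which after a change of coordinates is the $x$-$y$ plane, and the excess dimension is $1$. By hypothesis some vertex $v_1$ on the cycle has two non-cycle half-edges with directions $\pm u$ whose $z$-coordinate $a$ is nonzero. Every plane containing $h(\Gamma_0)$ equals $H$. Since a neighbourhood of the cycle is not contained in $H$, condition (i) of Definition \ref{def:wellspaced} holds. Hence $\gamma$ is tropically well-spaced, and the well-spacedness condition imposes no constraint on the tropical edge lengths. Consequently the cone $\sigma_\gamma$ appears in both $\Sigma(\mathcal{M}_\Lambda)$ and $\Sigma(\mathcal{W}_\Lambda)$. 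The logarithmic modification $\mathcal{W}_\Lambda \to \mathcal{M}_\Lambda$, which comes from radial alignment, is locally a subdivision. By the generality conditions of Section \ref{sec:generalconditions}, the conditions meet only the interiors of cones of maximal dimension, so the multiplicities $M_\gamma$ agree on the two sides.

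Next I would compare the virtual multiplicities. I would use Theorem \ref{thm:split} and Corollary \ref{cor:vanishing} to reduce to the glued space $\mathcal{G}_\gamma$. As in the proof of Lemma \ref{lem:vanish}, the excess comes from the last height-matching condition around the loop, in the $z$-direction transverse to $H$. The difference here is that the component $C_{v_1}$ has nonzero degree in the $z$-direction. I would then argue that on $\mathcal{W}_\Lambda$ the well-spacedness (factorization) condition for the subtorus $H$ cuts out the main component of this stratum. On $\mathcal{M}_\Lambda$, the excess obstruction bundle is the trivial $z$-tangent line, but the evaluation at $C_{v_1}$ moves in the $z$-direction. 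So the diagonal intersection $\triangle^!$ is computed as a refined intersection in which the extra $z$-freedom at $v_1$ absorbs the extra gluing equation. Concretely, I would redo the Minkowski-weight computation of Lemma \ref{lem:vanish}. Now the image of $\mathcal{M}_{v_1} \to D_{e_1} \times D_{e_2}$ is no longer contained in $\{z_1 = z_2\}$, and its tropical image has the correct dimension. It therefore meets the diagonal transversally, giving a local contribution that is a lattice index.

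Finally, I would identify both sides with the same count. Both contributions equal $M_\gamma$ times the number of logarithmic maps of type $\gamma$ meeting the conditions, namely $w_\gamma$. On the $\mathcal{W}_\Lambda$ side this holds because $\mathcal{W}_\Lambda$ is logarithmically smooth of the expected dimension. On the $\mathcal{M}_\Lambda$ side it follows from the transverse intersection found in the previous step. In the explicit computations I would expect the weight loop multiplicity $\times (a-1)$ from the list after Theorem \ref{thmB}.

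The main obstacle is the second step: showing that $m_\gamma^{\vir}$ on $\mathcal{M}_\Lambda$ equals the honest count, even though $\mathcal{M}_\Lambda$ is not logarithmically smooth along this stratum. I would first try to show that the whole stratum of $\mathcal{M}_\Lambda$ lies in the main component near these curves. This would follow from the factorization property holding automatically, since the circuit is not contracted in any quotient. Then the modification $\mathcal{W}_\Lambda \to \mathcal{M}_\Lambda$ would be an isomorphism there and the virtual classes would coincide.
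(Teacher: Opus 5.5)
Your closing paragraph is essentially the paper's whole proof. The paper argues that the type is rigid by a dimension count and that (a neighbourhood of) the cycle lies in no plane. So, as in Proposition \ref{prop:nonsup}, no modification or well-spacedness condition intervenes and the two moduli spaces agree along this stratum. The gluing and Minkowski-weight route of your Steps 2--3 is not used.

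Two corrections to your closing argument:
\begin{itemize}
\item Your stated reason, ``the circuit is not contracted in any quotient,'' is false: in the quotient by the subtorus of $H$ the circuit \emph{is} contracted. What holds is that $v_1$, which lies on the circuit, already carries edges of nonzero slope in that quotient. Hence the contraction radius $\delta_f$ is $0$ and the factorization property is vacuous; this is the logarithmic form of condition (i) of Definition \ref{def:wellspaced}.
\item You assume the cycle spans a unique plane $H$. The lemma is invoked again in Section \ref{sec:exdim2} when $h(\Gamma_0)$ lies on a line, and there (i) must be checked for every plane in the pencil through that line.
\end{itemize}

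The real gap is the assertion that the factorization property holds on the whole $\gamma$-stratum, and your own Step 2 exposes it. Take $C_{v_1}\cong\mathbb{P}^1$ with special points $q_1,q_2$ (cycle edges, $z$-slope $0$) and $p_\pm$ (edges $\pm u$, $z$-slope $\pm a$), with $v_1$ the only cycle vertex leaving $H$ as in your set-up. Then $z(q_1)/z(q_2)=\chi^{\pm a}$, where $\chi$ is the cross-ratio, so closing the loop in the $z$-direction is the equation $\chi^a=1$. The lattice-index count you propose therefore gives the local factor $\deg(\chi\mapsto\chi^a)=a$ at $v_1$, not the $a-1$ you expect in Step 3.

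The extra solution is the boundary point $\chi=1$ of $\overline{\mathcal{M}}_{0,4}$. There $C_{v_1}$ splits into $C'\ni q_1,q_2$ and $C''\ni p_\pm$, joined by a contracted edge $f$.
\begin{itemize}
\item This point is cut out transversally in $\mathcal{G}_\gamma$, since $\chi\mapsto\chi^a$ is \'etale at $1$.
\item It lies in the closed stratum indexed by $\gamma$, because $\sigma_\gamma$ is the face $\ell_f=0$ of its cone. So it contributes to $m_\gamma^{\mathrm{vir}}$.
\item It is not well-spaced over the cone adjacent to $\sigma_\gamma$. Let $\delta_1$ be the distance to the next exit from $H$. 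For $0<\ell_f<\delta_1$ the curve first leaves $H$ along only the two edges $\pm u$; equivalently, $dz|_{C''}\neq 0$ at the node.
\end{itemize}

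Consequently $\mathcal{W}_\Lambda\to\mathcal{M}_\Lambda$ is not an isomorphism near this point; note that $\mathcal{W}_\Lambda$ is a closed substack of the radially aligned modification, not itself a modification. Step 3 therefore cannot follow from a transverse count. You would need either to show that this point contributes zero, or to match it with the well-spaced curves over the wall $\ell_f=\delta_1$, which sit in a different cone of $\mathcal{W}_\Lambda$. The paper's proof, which simply asserts that the moduli spaces are the same, does not address this point either.
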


	\begin{proof}
		By dimension considerations, the tropical type is rigid (in the tropicalization of $\Mspace$). The cycle does not lie in any plane. Similarly to Proposition \ref{prop:nonsup}, the moduli spaces are the same.
	\end{proof}

	\subsection{Excess dimension 2} \label{sec:exdim2}

	\begin{proposition} \label{prop:ex2} 
		Let $(h, \Gamma)$ be a parametrised tropical curve of excess dimension $2$ meeting general point and line conditions (so $h(\Gamma_0)$ is contained in a line). Then $\Gamma$ contains one of the graphs in Figure \ref{fig:twochoices} as a subgraph, and all other vertices are trivalent. Moreover, 
		\begin{enumerate}[(i)]
			\item if $(h,\Gamma)$ is well-spaced, it has the left graph as a subgraph; and
			\item if $(h,\Gamma)$ is rigid, it has the right graph as a subgraph. 
		\end{enumerate}
	\end{proposition}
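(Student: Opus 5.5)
The plan is a dimension count on the family of tropical curves of type $\gamma$ meeting the conditions, followed by a local analysis of the cycle and a separate treatment of (i) and (ii). Since the figure is not reproduced before the statement, I am working from its description elsewhere. I take the right graph to be the configuration of Example \ref{exa:expansions}: two four-valent vertices $v_1,v_2$ joined by two parallel edges, with the neighbourhood of the cycle contained in a plane. I take the left graph to be the configuration of Figure \ref{fig:well-spaced2}: the same cycle, but with the planes $H_1 \neq H_2$ spanned by the non-cycle edges at $v_1,v_2$, possibly with one of $v_1,v_2$ trivalent.

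\textbf{Step 1 (dimension count).} For an affine condition general for tropical type, the locus of curves of type $\gamma$ meeting the conditions has dimension equal to the deformation dimension of $\gamma$ minus $a+2b$. The deformation dimension is $N+(r-3)(1-g)-\Ov(G)+e(\gamma)$, and $a+2b$ equals the expected dimension. So the locus has dimension $e(\gamma)-\Ov(G)=2-\Ov(G)$, which must be $\geq 0$. Hence $\Ov(G)\le 2$: at most two four-valent vertices, or one five-valent vertex, and all other vertices trivalent.

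\textbf{Step 2 (shape of the cycle).} Since $h(\Gamma_0)$ lies in a line $L$, walking around the cycle forces the direction along $L$ to reverse at least twice. At a turning vertex both cycle edges point the same way along $L$, so by balancing the sum of the non-cycle edge directions is a nonzero vector in $L$.
\begin{itemize}
\item If a turning vertex is trivalent, its third edge lies in $L$.
\item A vertex on the cycle where the direction does not reverse has non-cycle edges summing to a vector in $L$. Genericity (two markings cannot sit on adjacent edges, and edge lengths are general) should force such a vertex to be bivalent or to carry a marking, unless it adds overvalence.
\end{itemize}
Spending the overvalence budget $\le 2$, I expect to show there are exactly two turning vertices, joined by two parallel chains with only bivalent or marked points in between. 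I would also rule out a five-valent vertex, and a turning vertex whose non-cycle edges lie in $L$ (which would make the whole neighbourhood collinear, as in Remark \ref{rem:empty}), by checking that these conditions cut the dimension below zero. This produces the two-vertex cycle, with $v_1,v_2$ four-valent or one of them trivalent, and all other vertices trivalent.

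\textbf{Step 3 (rigid case, (ii)).} Rigidity means the locus is $0$-dimensional, so $\Ov(G)=2$ and both $v_1,v_2$ are four-valent. It remains to show $H_1=H_2$. My approach is to argue that if $H_1\neq H_2$, then tilting the two cycle edges independently inside $H_1$ and $H_2$ is not available, but rescaling the cycle length is. The cycle closes automatically along $L$, so the lengths of the two chains are tied only through the rest of the graph, giving a one-parameter deformation unless the neighbourhood is planar. In the planar case this extra direction is absorbed by the rubber action, matching the one-dimensional excess found in Example \ref{exa:expansions}.

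\textbf{Step 4 (well-spaced case, (i)).} If $H_1=H_2=H$, the neighbourhood of the cycle lies in $H$, so well-spacedness requires the minimum distance at which the curve leaves $H$ to be attained by at least three edges. Those distances are sums of lengths of fixed edges weighted by direction. Generality for edge lengths should forbid such a coincidence, leaving only $H_1\neq H_2$, which is the left graph.

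The main obstacle is Step 3. It depends on pinning down exactly which extra tropical deformations exist when $H_1\ne H_2$, and on reconciling this with the remark that $e(\gamma)$ counts excess in the deformation space. I would try to settle it by writing the cycle-closing equations in coordinates adapted to $L$ and $H_1,H_2$, then computing the rank of the tropical evaluation map \eqref{eqn:ev'} restricted to $\sigma_\gamma$.
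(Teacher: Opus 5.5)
Your proposal has a genuine gap in Step 2, and Steps 3 and 4 aim at the wrong targets because of a misreading of Figure~\ref{fig:twochoices}.

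\textbf{The figure and Step 3.} The two graphs differ by valence, not by planarity. In the notation of Figure~\ref{fig:neardoubleedge}, the right graph is the case $\ell_1=\ell_2=0$, where both turning vertices are four-valent. The left graph is the case $\ell_1=0<\ell_2$: one turning vertex is four-valent, and the other is trivalent, with its third edge running along $L$ to a trivalent vertex where the curve leaves $L$.

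Granted the shape of the cycle, the first sentence of your Step~3 is exactly the paper's proof of (ii). The only deformations slide $v_1,v_2$ along $L$, so rigidity means $\Ov(G)=2$, i.e.\ $\ell_1=\ell_2=0$.

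The rest of Step~3 should be dropped, because rigidity does not force $H_1=H_2$. When $\ell_1=\ell_2=0$, merge the two cycle edges into one edge of weight $w_a+w_b$. This gives a trivalent genus-$0$ curve through the same general conditions, which is rigid and determines $(h,\Gamma)$ whatever $H_1,H_2$ are. So there is no ``rescaling'' deformation when $H_1\neq H_2$. The one-dimensional excess in Example~\ref{exa:expansions} is an excess of the algebraic stratum in the planar case, not a tropical deformation. Indeed, the paper uses rigid types with $H_1\neq H_2$ right after the proof: they are well-spaced and contribute equally to both invariants.

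\textbf{The gap in Step 2.} You cannot exclude a turning vertex whose non-cycle edge lies in $L$ by a dimension count. A trivalent turning vertex adds no overvalence, and the curves with $\ell_1,\ell_2>0$ form the generic two-dimensional part of the family; your own Step~1 gives dimension $2$ there. The overvalence budget also does not say \emph{where} the overvalence sits. Nor does it rule out further cycle vertices whose third edge runs along $L$, since these have $\Ov=0$.

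The missing idea is the flattening in Lemma~\ref{lem:simplify}. Glue all edges lying over each segment of $h(\Gamma_0)$ to obtain a genus-$0$ curve $(h',\Gamma')$ through the same general conditions, and apply Lemma~\ref{lem:no4}. This forces every vertex off the cycle to be trivalent. It also shows that a third vertex of valence at least $3$ on the cycle would produce a four-valent vertex of $\Gamma'$, at the point where overlapping edges along $L$ separate (Figure~\ref{fig:flat}).

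\textbf{Consequence for Step 4.} The curves with $\ell_1,\ell_2>0$ survive the flattening argument. What removes them from the statement is that they are neither rigid nor well-spaced. Your Step~4 never treats them: it only considers two four-valent vertices with $H_1=H_2$, which is the paper's closing remark.

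The substantive case is Lemma~\ref{lem:notwellspaced}. Suppose $\ell_1,\ell_2>0$ and take a plane $H'\supset L$ different from $H_1,H_2$. Then $H'$ contains a neighbourhood of the cycle, and the curve first leaves $H'$ at the two outer trivalent vertices, at distances $\ell_1$ and $\ell_2$, each time along only two edges. So well-spacedness forces $\ell_1=\ell_2$. The condition for $H_1$ then fails: by generality of edge lengths, the first exit from $H_1$ is attained only once and along fewer than three edges.

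Your restriction to $H_1=H_2$ in Step~4 is in fact necessary. With $H_1\neq H_2$ the right graph is well-spaced (Figure~\ref{fig:well-spaced2}), so (i) should be read as a statement about the non-rigid members of the family.
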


		\begin{figure}[h]
		\centering
		\begin{tikzpicture}
			\fill (0,0) circle[radius=2pt];
			\fill (2,0) circle[radius=2pt];
			\fill (1.5,0) circle[radius=2pt];
			\draw (0, 0.05) -- (1.5, 0.05);
			\draw (0, -0.05) -- (1.5, -0.05);
			\draw (1.5, 0) -- (2,0);
			\draw (0, 0) -- (-1, 1); 
			\draw (0, 0) -- (-1, -1); 
			\draw (2, 0) -- (3, 1); 
			\draw (2, 0) -- (3, -1);

			\fill (5,0) circle[radius=2pt];
			\fill (7,0) circle[radius=2pt];
			\draw (5, 0.05) -- (7, 0.05);
			\draw (5, -0.05) -- (7, -0.05);
			\draw (5, 0) -- (4, 1); 
			\draw (5, 0) -- (4, -1); 
			\draw (7, 0) -- (8, 1); 
			\draw (7, 0) -- (8, -1);

		\end{tikzpicture}
		\caption{Two cases of well-spaced tropical types meeting conditions with excess dimension $2$.}
		\label{fig:twochoices}
	\end{figure}	
	
	We start with some lemmas. 
	
	\begin{lemma}\label{lem:no4}
		Let $h: \Gamma \rightarrow \mathbb{R}^r$ be a genus 0 parametrized tropical curve meeting a general affine condition. Then 
		\begin{enumerate}[(i)]
			\item the valence of each vertex is at most $3$; and
			\item no two infinite ends lie on the same line. 
		\end{enumerate} 
	\end{lemma}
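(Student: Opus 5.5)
We argue by dimension counting, comparing the dimension of the space of tropical curves of a fixed combinatorial type with the number of constraints imposed by the affine condition. For genus $0$ there is no superabundance. The space of parametrised tropical curves of type $\gamma$ (with $N$ ends, some of them contracted marked ends carrying the conditions) is therefore an open cone of dimension exactly
$$N + (r-3) - \Ov(G).$$
Because the condition is general for tropical type, any curve meeting it does so in a general configuration. The evaluation map from the cone of $\gamma$ to the condition space $\Ev'$ (as in \eqref{eqn:ev'}) must then be dominant near $h$, so $\dim \sigma_\gamma \geq \dim \Ev'$. Taking all types together, the total dimension of the moduli space of genus-$0$ curves of the given degree, $N + r - 3$, equals $\dim \Ev'$; this is the dimension constraint $a + 2b = \int_\beta c_1(T_X)$ transposed to $\mathbb{R}^r$. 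Only finitely many types occur, and each type of smaller dimension maps into a proper, positive-codimension subset of $\Ev'$. A general condition avoids the images of all such lower-dimensional cones.

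For (i), suppose $h$ has a vertex of valence at least $4$. Then $\Ov(G) \geq 1$, so the cone $\sigma_\gamma$ has dimension at most $N + r - 4 < \dim \Ev'$. Its image under the linear evaluation map therefore lies in a finite union of proper affine subspaces (or polyhedra of lower dimension) of $\Ev'$. By generality for tropical type the condition avoids this set, so no such $h$ meets it. Contracted marked ends deserve care: a marked point sitting at a trivalent vertex produces a $4$-valent vertex in $G$, and this is excluded by the same count, since it means the marked point lies on a vertex rather than in the interior of an edge. We count valence in $G$ including marked legs, consistently with the definition of $\Ov$.

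For (ii), suppose two infinite ends (non-contracted legs) $e, e'$ have images on a common line $L \subset \mathbb{R}^r$. Then their directions are parallel and their images are collinear. This is one or more linear conditions on the coordinates of $\sigma_\gamma$: if $e$ and $e'$ are adjacent to vertices $v, v'$, then $h(v') - h(v)$ must lie in the span of $u_e$. At least one such equation is nontrivial, unless $v = v'$ and the ends are parallel. In that case balancing together with part (i) forces the trivalent vertex $v$ to have both ends parallel on the same ray, which is excluded because the directions are opposite or the vertex is degenerate. In the opposite-direction case, two ends leaving $v$ along $\pm u$ force the third edge at $v$ to have direction $0$, which contradicts stability/trivalence. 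Hence the locus of curves with collinear ends is a proper linear subcone of $\sigma_\gamma$ of dimension at most $\dim \Ev' - 1$, and we conclude exactly as in (i).

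The main obstacle is the dominance/dimension argument itself. It has to be phrased so that "general for tropical type" really rules out positive-codimension strata. Concretely, we must check that the evaluation map on each cone is linear, so that images of lower-dimensional cones are nowhere dense, and that there are only finitely many combinatorial types of a given degree meeting bounded conditions. The latter holds because the degree bounds the number of vertices in genus $0$.
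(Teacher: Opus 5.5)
Your strategy is the same dimension count the paper defers to \cite[Proposition 9.4.6]{CMR23}, and part (i) is fine as written. The gap is in part (ii). You assert that the collinearity condition $h(v')-h(v)\in\mathbb{R}u_e$ is a nontrivial linear equation on $\sigma_\gamma$ unless $v=v'$, and this is false. The condition holds identically on $\sigma_\gamma$ whenever every edge on the path from $v$ to $v'$ has direction in $\mathbb{R}u_e$, and this is compatible with all vertices being trivalent.

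For example, let $v$ carry edges of directions $u,u,-2u$. One $u$-edge is the end $e$. The other is a bounded edge $f$ to a marked vertex $v'$, from which the end $e'$ continues in direction $u$. The $(-2u)$-edge $g$ joins the rest of the curve at $w$. Then $e,e'$ are collinear at every point of $\sigma_\gamma$, and $\dim\sigma_\gamma=N+r-3=\dim\Ev'$, so neither of your two dimension arguments excludes this type.

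Your treatment of $v=v'$ has the same defect. A vertex with two ends of direction $u$ and a third edge of direction $-2u$ is balanced and trivalent, and calling it ``degenerate'' does not rule it out. This vacuous case is not marginal: it is exactly the one invoked in Lemma~\ref{lem:simplify}, where whole subtrees lie in $L$.

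The missing idea is injectivity of the evaluation map. Generality for tropical type forces $\operatorname{ev}(\sigma_\gamma)$ to contain an open subset of $\Ev'$. Since $\dim\sigma_\gamma\le N+r-3=\dim\Ev'$, this forces $\operatorname{ev}|_{\sigma_\gamma}$ to be injective, which is strictly stronger than $\Ov(G)=0$. In the example, the marked point sits at $h(w)+(2\ell(g)+\ell(f))u$, so $(\ell(g),\ell(f))\mapsto(\ell(g)+s,\ell(f)-2s)$ is a kernel direction.

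In general, suppose the path is parallel to $u$. Let $T$ be the maximal subtree through $e,e'$ all of whose edges have directions in $\mathbb{R}u$. Balancing shows $T$ meets the rest of the curve only at $m$ vertices $w_1,\dots,w_m$, each having two edges outside $T$. If $T$ contains $n_e\ge 2$ ends and $n_m$ marked legs, it has at least $n_e+n_m+2m-3$ bounded edges. Keeping $h(w_1)$ fixed and requiring $h(w_2),\dots,h(w_m)$ and the marked points on $T$ not to move imposes only $m-1+n_m$ scalar conditions. This leaves a kernel of dimension at least $n_e+m-2\ge 1$ when $m\ge 1$.

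When $m=0$ the whole curve lies on a line, and there is again a kernel if $n_e\ge 3$. But if $n_e=2$, the curve is a line with two opposite ends and some marked points. Then the evaluation map is injective, and such a curve does meet general conditions (e.g.\ the fibre class of $\mathbb{P}^1\times\mathbb{P}^2$ through one point). So your claim that this configuration ``contradicts stability/trivalence'' is wrong, since the third edge is a marked leg. This degenerate degree has to be excluded by hypothesis rather than argued away.
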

	
	\begin{proof}
		Both are basic dimensions counts. See \cite[Proposition 9.4.6]{CMR23}. 
	\end{proof}
	
	\begin{lemma} \label{lem:simplify}
		If $(h, \Gamma)$ is a parametrized tropical curve meeting general conditions, and $\operatorname{Ex}(h) = 2$, then ignoring two valent vertices, the cycle consists of two vertices with two edges between them. 
	\end{lemma}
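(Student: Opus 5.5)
We prove Lemma \ref{lem:simplify} by a dimension count. The strategy is to produce more degrees of freedom than the general conditions permit unless the cycle has exactly two (non-bivalent) vertices.

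\textbf{Setup.} Since $\operatorname{Ex}(h)=2$, the image $h(\Gamma_0)$ is contained in a line $L$ but is not contracted. By balancing, every edge of the cycle has direction parallel to $L$. Ignoring bivalent vertices, let the cycle have vertices $v_1,\dots,v_k$, met in cyclic order along $\Gamma_0$. Since the image is a non-degenerate segment of $L$, at least two of these vertices are turning points of the cycle, namely the two extreme points of $h(\Gamma_0)$ on $L$. At a turning point both cycle edges leave in the same direction $\pm u_L$. Any other vertex $v_i$ has one cycle edge on each side along $L$.

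\textbf{Key step: the non-turning vertices carry extra freedom.} Cut $\Gamma$ along the cycle edges. Each vertex $v_i$ of the cycle carries a tree $T_i$ hanging off it, and the only coupling between the trees is that the positions $h(v_i)$ lie on $L$ in the prescribed order. At a non-turning vertex $v_i$, the two cycle edges are antiparallel. Balancing at $v_i$ then forces the remaining half-edges to sum to zero. If $v_i$ is trivalent this is impossible unless the single extra edge is contracted, which is excluded. Hence each non-turning vertex has at least two extra half-edges. Such a vertex can slide freely along $L$, since the lengths of the two adjacent cycle edges compensate each other, and the tree $T_i$ translates with it. In addition, the total length of the cycle is unconstrained.

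We then count parameters. Write the dimension of the tropical type as the expected dimension plus $e(h)=2$, minus the constraints imposed by the affine conditions. Rigidity up to the two excess directions, together with generality for tropical type (every constraint is met on the interior of a cone of the correct dimension), shows that each extra sliding parameter must be absorbed by a condition passing through $T_i$. Using Lemma \ref{lem:no4} on the genus $0$ trees $T_i$, each $T_i$ is trivalent and meets exactly as many conditions as its own moduli dimension allows. An extra vertex of the cycle therefore adds a direction in which the family can move, and this direction is not cut down. The same conclusion follows more directly from generality for edge lengths. Two vertices on $L$ whose positions are each fixed by their trees give fixed edges, and then a collection of cycle edges joining them along one side of the cycle satisfies an equality
$$\sum_{E\in\cE}\ell(E)u_E=\sum_{F\in\cF}\ell(F)u_F,$$
with the two sides running along the two arcs of the cycle. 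This contradicts generality for edge lengths when $k\ge3$, because there are then two distinct arcs between the same fixed endpoints.

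\textbf{Conclusion and main obstacle.} Combining these two facts forces $k=2$: two vertices joined by two edges, namely the two arcs of the cycle, each possibly subdivided by bivalent marked vertices. The delicate point is the case $k\ge3$ in which some cycle vertices are not fixed by their trees. The generality for edge lengths argument needs fixed edges, so in that case I would fall back on the sliding-parameter count. There the main obstacle is to verify that such a vertex genuinely yields a deformation direction beyond the two excess directions already counted in $e(h)$. I would first check this against the example in Example \ref{exa:expansions}, where the excess is exactly one per degenerate gluing around the loop.
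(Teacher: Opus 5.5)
There is a genuine gap: neither of your two mechanisms can exclude $k\ge 3$.

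\textbf{The edge-length step proves too much.} For any two cycle vertices $v_i,v_j$, the displacement along either arc of $\Gamma_0$ is $h(v_j)-h(v_i)$. This is just the cycle-closing relation, which holds identically on the cone of the type, so it can never contradict generality for edge lengths. Indeed, for $k=2$ the two edges of the double edge both have displacement $h(v_2)-h(v_1)$, and they are fixed when the type is rigid (right of Figure~\ref{fig:twochoices}). Your argument would therefore rule out the lemma's own conclusion.

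\textbf{The sliding step does not yield a contradiction.} Producing an extra deformation direction is not a contradiction, because the lemma concerns curves that need not be rigid. For example, the left graph of Figure~\ref{fig:twochoices} is an excess-$2$ curve through general conditions whose trivalent turning vertex slides freely. What you must show is that a type with $k\ge3$ cannot meet general conditions at all, i.e.\ that its evaluation map is not dominant. Your count never establishes this; you flag it yourself, and your phrases ``must be absorbed'' and ``is not cut down'' point in opposite directions. In addition, Lemma~\ref{lem:no4} does not apply to the hanging trees $T_i$, which are not complete curves through general conditions.

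\textbf{The case analysis is incomplete.} If the two cycle edges at a non-turning vertex have weights $w\neq w'$, balancing leaves the residue $(w-w')u_L$. So a trivalent non-turning vertex whose third edge runs along $L$ is allowed, contrary to your claim. Also, for $k\ge 4$ every cycle vertex can be a turning point (the image zigzags along $L$), and then there is nothing to slide.

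\textbf{The missing idea is the paper's flattening.} Subdivide $\Gamma$ at the preimages of the vertex images on $h(\Gamma_0)$. Merge the edges lying over each subsegment into one edge with summed weight. This gives a genus-$0$ curve $(h',\Gamma')$ with the same image, hence passing through the same general conditions. Genus-$0$ curves are never superabundant, so Lemma~\ref{lem:no4} applies to $\Gamma'$: it is trivalent and has no two infinite ends on a common line. A third non-bivalent cycle vertex then forces one of two things:
\begin{enumerate}
\item a vertex of valence at least $4$ in $\Gamma'$, either at a non-turning vertex whose extra edges leave $L$, or at the first point $a$ where edges running along $L$ leave it; or
\item two infinite ends on $L$.
\end{enumerate}
This moves the problem to a setting where the dimension count is sharp, which is exactly what your parameter count lacks.
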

	
	\begin{proof}
		We will produce a genus 0 curve $(h', \Gamma')$ which is 4-valent and goes through these general points, which gives a contradiction to Lemma \ref{lem:no4}. This is done by the following procedure: along $h(\Gamma_0)$, mark every point $q_i$ which a vertex is mapped to. There are finitely many such points. Subdivide $\Gamma$ by requiring that every point in $h^{-1}(q_i)$ is a vertex. Then for each line segment $t_j$ between adjacent $q_i$, $h^{-1}(l_j)$ is a disjoint collection of edges. Glue all these edges together and take the new weight to be the sum of the weights. It is clear the resulting curve is genus 0 and the balancing condition still holds on the induced map.

		Our assumption that $\operatorname{Ex}(h)= 2$ means that the image of the cycle lives in a line $L$. Since all of the edges of the cycle are bounded, the image of the cycle $\Gamma_0$ is a closed interval. There are at least vertex mapped to each of the two boundary points, say $v_1$ and $v_2$. Suppose for contradiction that there is another vertex, say $v$, in the cycle, with valency at least three. 
		
		Each of the vertices $v_1, v_2$, and $v$ have an adjacent edge which doesn't lie on the cycle. Say their directions are $u_1$, $u_2$ and $u$. Cutting the tropical curve at these edges produces three trees. 
		
		The directions $u_1$ and $u_2$ have differing signs, so $u$ is in the positive span of one of them. Without loss of generality, this is $u_1$. By Lemma \ref{lem:no4} (ii) applied to $\Gamma'$, we cannot have both the connecting trees lying wholly in $L$, since this would produce two infinite ends lying in $L$. Let the closest point to the cycle where the tropical curve leaves $L$ be $a$ (see Figure \ref{fig:flat}). 
		
		\begin{figure}[h]
			\centering
			\begin{tikzpicture}
				\fill (-0.5,0) circle[radius=2pt];
				\fill (0,0) circle[radius=2pt];
				\fill (1, -0.05) circle[radius=2pt];
				\fill (1.5,0.05) circle[radius=2pt];
				\fill (2,0.05) circle[radius=2pt];

				\draw (0, 0.05) -- (1.5, 0.05);
				\draw (0, -0.05) -- (1, -0.05);
				\draw (1, -0.05) -- (1.5, 0.05);
				\draw (1.5, 0.05) -- (2,0.05);
				\draw (-0.5, 0) -- (0, 0);
				\draw (-0.5, 0) -- (-1.5, 1); 
				\draw (-0.5, 0) -- (-1.5, -1); 
				\draw (2, 0.05) -- (3, 1.05); 
				\draw (2, 0.05) -- (3, -0.95);
				\draw (1, -0.05) -- (3, -0.05);

				\draw[->] (4, 0) -- (5,0);
				
				\fill (6.5,0) circle[radius=2pt];
				\fill (7,0) circle[radius=2pt];
				\fill (8, 0) circle[radius=2pt];
				\fill (8.5,0) circle[radius=2pt];
				\fill (9,0) circle[radius=2pt];

				\draw (6.5, 0) -- (10,0);
				\draw (6.5, 0) -- (5.5, 1); 
				\draw (6.5, 0) -- (5.5, -1); 
				\draw (9, 0) -- (10, -1);
				\draw (9, 0) -- (10, 1);

				\node at (9, 0.3) {$a$};
				
			\end{tikzpicture}
			\caption{Flattening the cycle.}
			\label{fig:flat}
		\end{figure}

		At the vertex $a$, the corresponding genus $0$ curve $\Gamma'$ must have a four valent vertex. Then Lemma \ref{lem:no4} (i) gives a contradiction.

	\end{proof}
	
	\begin{corollary} 
		If $\operatorname{Ex}(h) = 2$ and no edge is contracted by $h$, then (ignoring two valent vertices) the only rigid tropical types occur when there are two edges joining two vertices, (see right of Figure \ref{fig:twochoices}). 
	\end{corollary}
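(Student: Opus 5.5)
By Lemma \ref{lem:simplify}, once two-valent vertices are ignored, the cycle $\Gamma_0$ consists of two vertices $v_1, v_2$ joined by two edges $e, e'$, both mapping onto the segment $h(\Gamma_0) \subset L$. It remains to see what rigidity forces on the possible two-valent vertices inside $e$ and $e'$. I will use the hypothesis that no edge is contracted. Then a two-valent vertex $w$ in the interior of $e$ or $e'$ has two adjacent edges with opposite directions in $L$, by balancing. So $w$ is either a genuine marked point (a marked leg is an extra half-edge, making $w$ three-valent), or it is a subdivision point where $h$ changes nothing. The latter can be removed without changing the type. The work is therefore to rule out marked points, and more generally trivalent attachments, on the cycle edges of a rigid type.

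The key step is a deformation argument. Fix the configuration and look at the parameter recording the position along $L$ of the two-valent points on $e$ and $e'$. Because $h(\Gamma_0)$ lies in $L$, the lengths $\ell(e)$ and $\ell(e')$ satisfy only the single linear relation making their images both equal to the segment $[h(v_1), h(v_2)]$. This is exactly the source of the excess dimension. The procedure is as follows.
\begin{enumerate}[(a)]
\item If $e$ is subdivided by a vertex $w$ carrying no marking, then $h$ does not change $w$. We can slide $w$ along $e$ while keeping $h$ fixed, which is a positive-dimensional deformation in $\mathcal{M}_\Lambda^{\trop}$ preserving the type and the conditions. So the type is not rigid unless such vertices are absent, which is the right graph of Figure \ref{fig:twochoices}.
\item If $w$ carries a point or line marking, then by the Lemma \ref{lem:no4}-style dimension count, applied to the flattened genus-$0$ curve $\Gamma'$ from the proof of Lemma \ref{lem:simplify}, the marking lands at an interior point of the segment. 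But sliding $w$ is impossible only if it changes the evaluation. So we compare the dimension of the cone $\sigma_\gamma$ with the dimension of $\Ev'$: an extra vertex on the cycle adds one parameter and its marking imposes at most the same number of conditions as in the unsubdivided case.
\end{enumerate}
I expect the count in (b) to show that the type either is not rigid or fails the generality assumption that conditions meet cones in their interiors (Section \ref{sec:generalconditions}).

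The main obstacle is (b): making the dimension bookkeeping precise in the superabundant setting, where the cone of $\gamma$ already has dimension $2$ more than expected. I would first try to compute $\dim \sigma_\gamma$ directly as the expected dimension plus $2$. Then I would check that a marked two-valent point contributes $+1$ to $\dim\sigma_\gamma$ and at most $+2$ or $+3$ conditions, and compare the resulting totals against the rigidity requirement $\dim\sigma_\gamma = \dim \Ev'$ together with the general position of the conditions.
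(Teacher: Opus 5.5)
There is a genuine gap: your argument is aimed at the wrong configuration. Lemma \ref{lem:simplify} only says that $v_1$ and $v_2$ are the only vertices of valence at least three on the cycle. It does not say that the curve leaves $L$ at $v_1$ and $v_2$.

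By balancing, $v_1$ may be trivalent. Its two cycle edges both point toward $v_2$, so its third edge must run along $L$ away from the cycle, for some length $\ell_1>0$, before reaching a trivalent vertex where the curve leaves $L$. The same can happen at $v_2$. This is Figure \ref{fig:neardoubleedge}, and with $\ell_1=0<\ell_2$ it is exactly the left graph of Figure \ref{fig:twochoices}. Separating this from the right graph is the whole content of the corollary. Your proposal never considers it: you only examine two-valent points inside $e$ and $e'$, which the statement already tells you to ignore.

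The missing step is the paper's. The flattened genus-$0$ curve $\Gamma'$ is rigid for the conditions, so everything outside the segment of $L$ is pinned down. The two excess directions are then precisely the slides of $v_1$ and $v_2$ along $L$. Each slide changes $\ell_1$ or $\ell_2$ together with the cycle edge lengths, and all conditions remain satisfied. If $\ell_1>0$ or $\ell_2>0$, such a slide stays inside the type, so the type is not rigid. If $\ell_1=\ell_2=0$, any slide splits a four-valent vertex and changes the type, so the type is rigid.

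Your step (b) is also headed toward a false conclusion. Marked points on the cycle edges are not excluded in rigid types. A condition lying on the edge of $\Gamma'$ that gets doubled does not obstruct rigidity when $\ell_1=\ell_2=0$. The paper explicitly allows ``marked points in the middle'' of the cycle in the excess-dimension-$2$ multiplicities of Section \ref{sec:winv}.

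Likewise, sliding an unmarked subdivision point in (a) does not change the tropical map, so it is not a deformation relevant to rigidity. In the toric setting such bivalent vertices sit on walls of $\Sigma_X$ and cannot slide anyway.

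So the dimension bookkeeping you flag as the main obstacle is not where the difficulty lies. Once you identify the two excess directions as the positions of $v_1,v_2$ along $L$, the argument is immediate.
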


	\begin{proof}
		From Lemma \ref{lem:simplify}, the neighbourhood of the cycle looks like Figure \ref{fig:neardoubleedge}, with the lengths $\ell_1$ or $\ell_2$ potentially being zero.

		\begin{figure}[h]
			\centering
			\begin{tikzpicture}
				\fill (0,0) circle[radius=2pt];
				\fill (2,0) circle[radius=2pt];
				\fill (1.5,0) circle[radius=2pt];
				\fill (0.5, 0) circle[radius=2pt];
				\draw (0.5, 0.05) -- (1.5, 0.05);
				\draw (0.5, -0.05) -- (1.5, -0.05);
				\draw (0, 0) -- (0.5, 0);
				\draw (1.5, 0) -- (2,0);
				\draw (0, 0) -- (-1, 1); 
				\draw (0, 0) -- (-1, -1); 
				\draw (2, 0) -- (3, 1); 
				\draw (2, 0) -- (3, -1);
				
				\node at (0.2, 0.5) {$\ell_1$};
				\node at (1.8, 0.5) {$\ell_2$};
				\node at (0.7, -0.3) {$v_1$};
				\node at (1.5, -0.3) {$v_2$};

			\end{tikzpicture}
			\caption{The boundary of $\cM_\gamma$ has the tropical type shown, with $\ell_1>0$ or $\ell_2 >0$.}
			\label{fig:neardoubleedge}
		\end{figure}	
		
		Two dimensions of deformation are given by moving $v_1$ and $v_2$ along the edge, and by dimension assumption, these are all the deformations meeting the conditions. The only rigid types occur when $\ell_1 = \ell_2 = 0$.

	\end{proof}
	
	\begin{remark}
		By the same argument, the four outer edges leave $L$. This argument also shows that all edges outside of the neighbourhood of the circuit are fixed, and the rest of the vertices are trivalent. 
	\end{remark}

	\begin{lemma}\label{lem:notwellspaced}
		Let $(h, \Gamma)$ be a parametrized tropical curve with cycle $\Gamma_0$ given by two vertices connected by two edges as in Figure \ref{fig:neardoubleedge}. Then $(h, \Gamma)$ is well-spaced if and only if one of the $\ell_i$ are $0$. 
	\end{lemma}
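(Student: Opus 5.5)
We will check the well-spacedness condition of Definition \ref{def:wellspaced} directly, one plane at a time. In Figure \ref{fig:neardoubleedge} the cycle $\Gamma_0$ maps into a line $L$, and $v_1,v_2$ denote the endpoints of the doubled segment. Let the outer vertices be $w_1$ (joined to $v_1$ by the edge of length $\ell_1$) and $w_2$ (joined to $v_2$ by the edge of length $\ell_2$). By Lemma \ref{lem:simplify} and the remark after the corollary, each $w_i$ is trivalent and its two outer edges leave $L$. Let $H_i$ be the plane spanned by $L$ and the outer edges at $w_i$; balancing at $w_i$ forces both outer edges into a single plane through $L$.

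The planes to check are exactly those containing $L$.
\begin{enumerate}[(a)]
\item If $H\neq H_1,H_2$, some outer edge at $w_1$ or $w_2$ leaves $H$. But condition (i) concerns an open neighbourhood of $\Gamma_0$, and when $\ell_1,\ell_2>0$ the cycle together with the segments of length $\ell_i$ lies in $L\subset H$.
\item So for such $H$ we must instead use the distances at which the curve leaves $H$. It leaves at distance $\ell_1$ via the two edges at $w_1$ if $H_1\neq H$, and at distance $\ell_2$ via the two edges at $w_2$ if $H_2\neq H$.
\end{enumerate}

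The key step is to compute, for each $H\supset L$, the set of edges through which the curve leaves $H$ at minimal distance. This set has size $2$ or $4$, or $2$ at distance $\ell_1$ and $2$ at distance $\ell_2$. Condition (ii) requires at least three edges at the minimum distance.

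\emph{Case $\ell_1,\ell_2>0$.} If $H_1\neq H_2$, take $H=H_1$. The curve then leaves $H$ only at $w_2$, via exactly two edges, so $(h,\Gamma)$ is not well-spaced. If $H_1=H_2$, take any other $H\supset L$. Then:
\begin{enumerate}[(a)]
\item if $\ell_1\neq\ell_2$, the minimum is attained at a single vertex with two edges, so the curve is not well-spaced;
\item if $\ell_1=\ell_2$, four edges leave at the minimum. This equality is excluded by generality for edge lengths, since both edges are fixed. If it occurred, the curve would be well-spaced, which contradicts the statement. Here we must argue that generality rules it out, or else that such a curve is not considered.
\end{enumerate}

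\emph{Case, say, $\ell_1=0$.} Then $v_1=w_1$ is four-valent with two outer edges leaving $L$. For $H\neq H_1$, some edge at $v_1$ leaves $H$ immediately, so a neighbourhood of $\Gamma_0$ is not contained in $H$ and (i) holds. For $H=H_1$ the neighbourhood of $v_1$ lies in $H$, and we must check $H_1$ separately:
\begin{enumerate}[(a)]
\item if $H_2\neq H_1$ and $\ell_2>0$, the curve leaves $H_1$ only at $w_2$ via two edges, so it fails. This contradicts the claimed equivalence unless we read Figure \ref{fig:neardoubleedge} as in Figure \ref{fig:well-spaced3}, where the well-spaced condition on $H_1$ is a length condition.
\item if $\ell_2=0$ as well, we are in the rigid right-hand picture of Figure \ref{fig:twochoices}. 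Both vertices are four-valent and, by genericity, span distinct planes, so the curve is well-spaced as in Figure \ref{fig:well-spaced2}.
\end{enumerate}

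The main obstacle is this interaction with the special plane $H_1$ when exactly one $\ell_i$ vanishes. We expect to resolve it using that $\ell_i$ are lengths measured from the cycle, so that "one $\ell_i=0$" is the intended condition up to the degenerate coincidence $H_1=H_2$. That coincidence would be excluded by generality, via Lemma \ref{lem:no4} applied to the flattened genus $0$ curve as in Lemma \ref{lem:simplify}. The final write-up should state these generality reductions explicitly.
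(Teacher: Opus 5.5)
Your proposal has a genuine gap in the direction that matters: that a well-spaced curve must have some $\ell_i=0$. This is the only implication the paper's proof establishes, and the only one Proposition \ref{prop:ex2} uses.

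In your subcase $H_1=H_2$, $\ell_1=\ell_2>0$, you invoke generality for edge lengths on the grounds that the edges of lengths $\ell_1,\ell_2$ are fixed. They are not fixed. By the corollary just before the lemma, sliding $v_1$ and $v_2$ along $L$ gives exactly the two deformation directions, and these change $\ell_1$ and $\ell_2$. So generality says nothing here, and $\ell_1=\ell_2$ is a genuine codimension-one locus; it is precisely what the test against a generic plane $H'\supset L$ cuts out. Your further claim that such a curve ``would be well-spaced'' is also false, because you never test the special plane $H_1=H_2$ itself.

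The missing step, which is the paper's argument, uses both kinds of plane in sequence:
\begin{enumerate}
\item Use a plane $H'\supset L$ with $H'\neq H_1,H_2$ to force $\ell_1=\ell_2$.
\item Test $H_1$. Beyond $w_1$ and $w_2$ every edge is fixed (see the remark after that corollary). Generality for edge lengths therefore makes the distances at which the outer trees leave $H_1$ distinct. Combined with $\ell_1=\ell_2$, the minimum is attained only once, at a trivalent vertex and via two edges, so (ii) fails.
\end{enumerate}
The same two-step argument repairs your subcase $H_1\neq H_2$. There, ``the curve leaves $H_1$ only at $w_2$'' is unjustified: the tree beyond $w_1$ may leave $H_1$ at a distance $\ell_1+d_1$ equal to $\ell_2$, giving four edges at the minimum. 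Only after $\ell_1=\ell_2$ is known do you get $\ell_1+d_1>\ell_2$.

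Your doubt about the converse is justified. With $\ell_1=0<\ell_2$, the plane $H_1$ contains a neighbourhood of $\Gamma_0$, so well-spacedness at $H_1$ is an extra length condition, as in Figure \ref{fig:well-spaced3}. Note that it is not automatically violated either, since the trees at $v_1$ may leave $H_1$ at distance exactly $\ell_2$. The paper does not prove this direction, and you should not try to force it.

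Your proposed resolution of that issue is wrong, however. The coincidence $H_1=H_2$ is not excluded by generality. Lemma \ref{lem:no4} controls valences and infinite ends, not coplanarity of the edge directions at the two ends of the doubled segment. Planar neighbourhoods of the cycle genuinely occur for general conditions: see Example \ref{exa:expansions} and case (1) after Proposition \ref{prop:ex2}.

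Relatedly, for $\ell_1=\ell_2=0$ the curve is well-spaced only when $H_1\neq H_2$. When $H_1=H_2$, the same generality argument on the fixed outer edges shows that (ii) fails at $H_1$. So your claim about the right-hand picture needs that hypothesis.
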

	
	\begin{proof}
		
		Let $H_1$ be the plane spanned by the three left most vertices, and $H_2$ the plane spanned by the three right most vertices. Suppose, for contradition, that the curve is well-spaced and $\ell_1, \ell_2 > 0$. Then take $H'$ to be a plane which contains the cycle but is different from $H_1$ and $H_2$. Applying well-spacedness, we conclude that $\ell_1 = \ell_2$. 
		
		The parametrized tropical curve eventually leaves $H_1$, so there is a minimal distance from each such vertex to where the curve leaves $H_1$. Moreover, by generality of edge lengths, we may assume that these distances are distinct. Combined with the condition that $\ell_1 = \ell_2$, the minimal distance until the curve leaves the plane is met only once, and leaves at a trivalent vertex. Hence the well-spacedness condition is not met. 
	\end{proof}
	
	We finish the proof of Proposition \ref{prop:ex2} by remarking that the corresponding genus $0$ curve $(h', \Gamma')$ is rigid. By generality of edge lengths, the $(h, \Gamma)$ cannot be well-spaced if $\ell_1 = \ell_2 = 0$. 
	
	When the curve is rigid, we have two cases:
	
	\begin{enumerate}
		\item The neighbourhood of the cycle is contained in a plane $H$. In this case, generality of point conditions implies that the curve is not well-spaced. By explicit computation, as in Example \ref{exa:expansions}, the excess dimension of the strata is $1$. The logarithmic Gromov--Witten contribution vanishes by an application of Lemma \ref{lem:vanish}.
		\item The neighbourhood of the cycle is not contained in a plane $H$. Then the curve is well-spaced, and the logarithmic Gromov--Witten and well-spaced contributions coincide, as in Lemma \ref{lem:neighbourhoodspans}.
	\end{enumerate}

	\subsection{Excess dimension 3} \label{sec:exdim3}

	For a curve with excess dimension 3 passing through general conditions to be rigid there must be a genus $1$ vertex, and no such curve is well-spaced. For rigidity, this is clear. For well-spacedness, note that the image of the curve is a genus $0$ curve through the same conditions, which for dimension reasons must be trivalent, and by the generality of the conditions has no two edge-lengths the same. Thus if there was a contracted cycle, the neighborhood of the cycle could not span, and the well-spacedness condition could not be satisfied. 
	
	We calculate the contribution to the logarithmic Gromov--Witten invariant. Suppose there is a genus 1 vertex. Then the underlying graph is a tree. By Lemma \ref{lem:no4}, this tree is trivalent. The genus 1 vertex is either bivalent or trivalent. The two cases are shown in Figure \ref{fig:g1vertex}.

		\begin{figure}[h]
			\centering
			\begin{tikzpicture}
				\fill (0,0) circle[radius=2pt];
				\fill (2,0) circle[radius=2pt];
				\fill (1,0) circle[radius=2pt];
				\node at (1.1,0.3) {$1$};
				\draw (0, 0) -- (2, 0);
				\draw (0, 0) -- (-1, 1); 
				\draw (0, 0) -- (-1, -1); 
				\draw (2, 0) -- (3, 1); 
				\draw (2, 0) -- (3, -1);

				\fill (6,0) circle[radius=2pt]; 
				\draw (4.8,0) -- (6, 0); 
				\draw (6,0) -- (7, 1); 
				\draw (6,0) -- (7, -1); 
				\node at (6, 0.3) {$1$};
				
			\end{tikzpicture}
			\caption{Genus 1 vertex at 2-valent or 3-valent vertex.}
			\label{fig:g1vertex}
		\end{figure}

		We first want to reduce to the single genus one vertex. Let the genus one vertex be $v_1$ and label the other vertices $v_i$ for $i = 2, \ldots, |V(\gamma)|$. The source curve is given by a genus one curve $C_1$ and genus zero curves $C_i$ for $i = 2,  \ldots, |V(\gamma)|$. By generality of the conditions, each vertex is either trivalent, or bivalent and contained in a codimension $1$ cone of $\Sigma_X$.  The component $C_i$ corresponding to any genus 0 vertex $v_i$ is either
		\begin{enumerate}
			\item isomorphic to $\mathbb{P}^1$ with three points and $C_i$ is mapped to a torus fixed point after collapsing expansions; or  
			\item is isomorphic to $\mathbb{P}^1$ with two points and $C_i$ is a cover of a torus fixed line after collapsing expansions, with full ramification at the two special points. 
		\end{enumerate} 
	 	
	 	In particular, the $C_i$ for $i \geq 2$ are unique up to isomorphism. Once the curve $C_1 \in \overline{\mathcal{M}}_{1,3}$ has been chosen, the locus in  $\bigtimes_{v \in V(\gamma)} \mathcal{M}_v $ satisfying the gluing condition are parametrized by the rubber torus. We use the point and line conditions to cut this down.

	 	 A description of the rubber given in \cite{CN24}, which we use to give more explicit formulas. An expansion of $X$ is given by a polyhedral subdivision of $\Sigma_X$ whose vertices $v$ correspond to the components $X_v$ of the expansion. There is a tropical moduli space $\tau$ parametrizing the edge lengths of the subdivision. The rubber torus is the torus associated to the tropical moduli space $T_\tau = \tau \otimes \mathbb{G}_m$ \cite[Theorem A]{CN24}. As in Equation \eqref{eqn:ev'}, there is a tropical evaluation map 
	 	$$\tau \rightarrow \prod_{i=1}^a (\mathbb{Z}^3/\ell_i') \otimes (\mathbb{Z}^3)^b = N_{\mathrm{Ev}'}.$$
	 	Let $N_{\sigma_v} \rightarrow N_{\Ev'}$ be the associated map on lattices. This has lattice index, say $L$. 
	 	
	 	Let $T_{\Ev} = N_{\Ev'} \otimes \mathbb{G}_m$. Then 
	 		$$T_\tau \rightarrow T_{\Ev'}$$
	 	is a degree $L$  cover. Hence, on the interior of the strata, there are $L$ points in $\bigtimes_{v \in V(\gamma)} \mathcal{M}_{v} $ whose genus one component is $C_1$, satisfying the gluing and meeting conditions. The point and line conditions force every curve component $C_i$ to map into the interior of the corresponding component $X_v$. So $\mathcal{G}_\gamma$ is finite over the locus of curves in $\overline{\mathcal{M}}_{1,3}$ admitting meromorphic functions specified by the edge directions of $v_1$.

	 	We have reduced to a double ramification cycle calculation. There is a formula for DR cycles obtained by Jande--Pandharipande--Pixton--Zvonkine \cite{JPPZ17}, implemented in the Sage package, admcycles \cite{admcycles}. It is \emph{not} true that the higher double ramification cycles are products of DR cycles. In genus 1, there is an explicit formula \cite{HMPPS25}, but since we are working with a very simple case, we obtain a simpler result. 
	
		\begin{lemma}
			For any vectors $a = (a_i)_i, b = (b_i)_i \in \mathbb{Z}^n$ with $\sum a_i = \sum b_i = 0$, we have
			$$\int_{[\overline{\mathcal{M}}_{1,3}]} \TC_1(a,b,0) = \int_{[\overline{\mathcal{M}}_{1,3}]} -\lambda_1 \DR_1(a) \DR_1(b) $$
		\end{lemma}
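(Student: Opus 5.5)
One approach is to use the genus-one formula for the toric contact cycle from \cite{HMPPS25} (equivalently, the logarithmic DR cycle of the rank-two torus $\mathbb{G}_m^2$). With $a$ and $b$ the contact orders of the two coordinate functions and the third marking carrying zero contact in both directions, this expresses $\TC_1(a,b,0)$ as a product term $\DR_1(a)\DR_1(b)$ plus a correction. Because we are in genus one, the correction is supported on boundary strata together with a $\lambda_1$ term. Our task is to show that, after integration against $[\overline{\mathcal{M}}_{1,3}]$, everything collapses to $-\lambda_1\DR_1(a)\DR_1(b)$. The dimensions fit: $\TC_1$ for a rank-two target has codimension $2g+\text{rank}\cdot(\ldots)$; concretely it lies in codimension $3 = g\cdot r + \ldots$ on the $3$-dimensional space $\overline{\mathcal{M}}_{1,3}$, while $\lambda_1\DR_1(a)\DR_1(b)$ has codimension $1+1+1=3$.

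The plan proceeds in three steps. First, we write $\DR_1(a)$ by Hain's formula together with the genus-one boundary correction: $\DR_1(a)=\tfrac12\sum_{i} a_i^2\psi_i$ minus the sum over separating boundary divisors $\tfrac12 a_I^2\delta_{0,I}$, minus $\tfrac{1}{24}\delta_{irr}$-type terms, that is, the $-\lambda_1$ contribution coming from $\sum_i a_i^2/2\cdot\psi$ versus $\eta$. Second, we recall that the higher DR cycle in genus one for rank $r$ satisfies $\TC_1 = (\text{top Chern class of the }r\text{-fold sum})$, which can be written as the codimension $r+1$ part of $\prod_{j}(\text{Hain class}_j)$ corrected by $\lambda_1$. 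In rank two this reduces to the assertion that the naive product $\DR_1(a)\DR_1(b)$ differs from $\TC_1(a,b,0)$ by the excess class coming from the extra direction of the common genus-one kernel. That excess contribution equals $-\lambda_1$ times the product, with the convention that the genus-one Hodge bundle is the obstruction bundle for the rank-two problem restricted to the contracted-elliptic locus. Third, we integrate over $\overline{\mathcal{M}}_{1,3}$, using $\lambda_1=\tfrac{1}{12}\delta_{irr}$ and $\int\lambda_1\psi_i\psi_j=\tfrac1{24}$-type evaluations, together with the vanishing of $\lambda_1$ on $\delta_{irr}$, to kill the cross terms.

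Concretely, I would verify the identity by direct computation on $\overline{\mathcal{M}}_{1,3}$. Write both sides as polynomials in $a_i,b_j$, expand the formula of \cite{HMPPS25} in the tautological ring, and compare degree by degree using the known intersection numbers $\int_{\overline{\mathcal M}_{1,3}}\lambda_1\psi_i\psi_j=\frac1{24}$ and the restrictions of $\lambda_1$ to boundary divisors, which vanish on $\delta_{irr}$ and pull back to $\lambda_1$ on the genus-one factor for separating divisors. A useful simplification is that the third marking has zero contact, so the only nontrivial boundary strata are those separating $\{1,2\}$ from $\{3\}$ or isolating a rational tail. On a rational tail $\DR_0$ is $1$, so these terms reduce to $\overline{\mathcal{M}}_{1,1}$ or $\overline{\mathcal{M}}_{1,2}$ integrals with $\int_{\overline{\mathcal{M}}_{1,1}}\lambda_1=\frac1{24}$. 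The main obstacle is controlling the non-multiplicative correction term in the genus-one higher DR formula of \cite{HMPPS25}, and checking that its integral equals precisely the $-\lambda_1$ modification rather than leaving residual boundary terms. I expect this to come from piecewise polynomial terms supported on the cone where the two slope vectors are linearly dependent along the loop, and I would first test the identity numerically with admcycles \cite{admcycles} for small $a,b$ to fix signs.
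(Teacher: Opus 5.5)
There is a genuine gap, and it starts with what $\TC_1(a,b,0)$ is. In the paper it is the toric contact cycle of the rank-three torus $\mathbb{G}_m^3$ whose contact matrix has rows $a$, $b$ and the zero vector. The three edges at the trivalent genus-one vertex span a plane, so after a change of coordinates their third coordinates vanish. The $0$ is therefore a third torus direction in which the map has degree zero. It is not zero contact at the third marking: in the application $a=(a_1,a_2,-a_1-a_2)$, so $a_3\neq 0$ in general.

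Your dimension count does not close up, and this exposes the problem. A genus-one contact cycle of rank $r$ has codimension $gr=r$. The rank-two class you set up therefore has codimension $2$ and integrates to zero over the three-dimensional $\overline{\mathcal{M}}_{1,3}$, whereas the right-hand side is $-\tfrac{1}{24}(a_1b_2-a_2b_1)^2$. For the same reason, your Step 2 claim that $\TC_1$ differs from $\DR_1(a)\DR_1(b)$ by $-\lambda_1\DR_1(a)\DR_1(b)$ equates classes of codimension $2$ and $3$. Expanding the formula of \cite{HMPPS25} for a rank-two target computes the wrong class, and nothing in it produces the factor $\lambda_1$.

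The missing idea is that $-\lambda_1$ is exactly the contribution of the zero row, and it splits off multiplicatively. On the logarithmic modification $\widetilde{\mathcal{M}}_{1,n}$, higher DR cycles become products of strict transforms of ordinary DR cycles \cite{RM21}. A zero row forces no further subdivision there, since every tropical curve admits the constant map in that direction. Hence the strict transform of $\DR_1(0)$ is pulled back from $\overline{\mathcal{M}}_{1,n}$. Pushing forward yields $\TC_1(a,b,0)=\TC_1(a,b)\cdot\DR_1(0)$, and $\DR_1(0)=(-1)^g\lambda_g=-\lambda_1$ by \cite{JPPZ17}.

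Once this is in place, you never need to control the non-multiplicative correction term. The rank-two cycle $\TC_1(a,b)$ agrees with $\DR_1(a)\DR_1(b)$ over curves of compact type, so their difference is supported on $\delta_{\mathrm{irr}}$, where $\lambda_1$ restricts to zero. Your Step 3 observation that $\lambda_1$ kills $\delta_{\mathrm{irr}}$-supported terms is precisely this final step. However, it only applies after the zero row has supplied the factor $\lambda_1$.
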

	
		\begin{proof}
			We first show we can factorise out $DR_1(0)$. It is shown in \cite{RM21} that while the higher DR cycles are not products of DR cycles, they are under an appropriate logarithmic modification. To construct this modification, we take the morphism of cone complexes $\mathcal{M}_\Lambda(\mathbb{R}^3) \rightarrow \mathcal{M}_{1,n}^{\trop}(\mathbb{R}^3)$ as considered in Section \ref{sec:decomposition} when forming the space of rubber logarithmic morphisms $R_{1,n, \Lambda}$. We do the necessary subdivisions on the cone complex of $\overline{\mathcal{M}}_{1,n}$ to make this an inclusion of a subcone, which gives a logarithmic modification $\widetilde{\mathcal{M}}_{1,n}$ of $\overline{\mathcal{M}}_{1,n}$. 
			
			Since the third vector in our toric contact cycle is $0$, it imposes no further subdivisions (every tropical curve admits a degree $0$ map to $\mathbb{R}^3$, namely the constant map). In $\tilde{\mathcal{M}}_{1,n}$, the strict transform of $TC_1(a,b,0)$ is given as the product of the strict transform of $TC_1(a,b)$ and the strict transform of $DR_1(0)$. However, the strict transform of $DR_1(0)$ is pulled back from $\overline{\mathcal{M}}_{1,n}$. Thus, pushing down to $\overline{\mathcal{M}}_{1,n}$, we obtain $TC_1(a,b,0) = TC_1(a,b)DR_1(0)$.
			
			Now $DR_1(0)$ is known to be $-\lambda_1$ \cite[Section 0.5.3]{JPPZ17}. Finally,
			$$\int_{[\overline{\mathcal{M}}_{1,3}]} \lambda_1 h\DR_1(a,b) = \int_{[\overline{\mathcal{M}}_{1,3}]} \lambda_1 \DR_1(a) \DR_1(b) $$
			since $\lambda_1$ vanishes away from the locus of curves of compact type. 
		\end{proof}
	
		We are ready to calculate the contributions. Suppose there is a genus 1 vertex which is bivalent. Then both the logarithmic and well-spaced contributions are 0. Indeed, in the matrix of contact orders at the bivalent vertex, with respect to appropriate basis, there are two columns of ones. This implies the relevant invariant contains a factor of $\DR_1(0)^2 = \lambda_1^2 = 0$, and this does not contribute. 
		
		Suppose there is a genus 1 vertex at a trivalent vertex. Let the corresponding genus $0$ combinatorial type (where the genus $1$ label is turned into a genus $0$ label) be $\gamma'$. 
		
		\begin{lemma}
			We have 
			$$m_{\gamma}^{\vir} = - \frac{1}{24}(a_1 b_2 - a_2 b_1) m_{\gamma'}^{\vir}$$
			where $(a_1, b_1)$ and $(a_2, b_2)$ are the direction vectors of any two edges adjacent to $v_1$. 
		\end{lemma}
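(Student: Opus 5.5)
The plan is to compare the gluing data for $\gamma$ and $\gamma'$ vertex by vertex using Theorem \ref{thm:split}, and then isolate the genus one vertex $v_1$ as a single integral on $\overline{\mathcal{M}}_{1,3}$.

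\textbf{Step 1: reduce to the vertex $v_1$.} The types $\gamma$ and $\gamma'$ have the same underlying graph, edge directions, expansion factors $w_e$ and factor $\ell(\gamma)$. They differ only in the genus label at $v_1$. In the fibre square of Theorem \ref{thm:split}, the factors $\mathcal{M}_{v}$ for $v\neq v_1$ and the target $\bigtimes_e\cD_e$ are therefore identical for $\gamma$ and $\gamma'$. As explained above, the components $C_i$ for $i\geq 2$ are rigid, and the rubber torus $T_\tau\to T_{\Ev'}$ contributes the same degree $L$ in both cases.

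For $\gamma'$, the space $\mathcal{M}_{v_1}$ is the rubber space of genus $0$ maps to the toric surface-with-trivial-direction $X_{v_1}$ with three contact points. This space is a point up to rubber, so $m^{\vir}_{\gamma'}$ equals $L$ times the degree factor $\frac{1}{\ell(\gamma)}\prod_e w_e$ times the Minkowski-weight intersection multiplicity. For $\gamma$, the only change is that this point is replaced by the virtual class of the genus one rubber space at $v_1$. Pushed forward to $\overline{\mathcal{M}}_{1,3}$, that class is the toric contact cycle $\TC_1(a,b,0)$. Here $a=(a_1,a_2,-a_1-a_2)$ and $b=(b_1,b_2,-b_1-b_2)$ are the coordinates of the three half-edge directions, after a change of coordinates putting the plane spanned by the star of $v_1$ in the first two coordinates; the third contact vector is $0$. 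Using Corollary \ref{cor:vanishing} together with the rigidification of Section \ref{sec:degenvsstrata} to pass between $\mathcal{M}_\gamma$ and $\mathcal{M}^\rig_\gamma$, I expect
$$m^{\vir}_\gamma = m^{\vir}_{\gamma'}\cdot\int_{\overline{\mathcal{M}}_{1,3}}\TC_1(a,b,0).$$

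\textbf{Step 2: evaluate the integral.} By the preceding lemma,
$$\int_{\overline{\mathcal{M}}_{1,3}}\TC_1(a,b,0) = -\int_{\overline{\mathcal{M}}_{1,3}}\lambda_1\DR_1(a)\DR_1(b).$$
Since $\lambda_1$ kills the irreducible boundary divisor, I would use Hain's compact-type formula
$$\DR_1(a)=\tfrac12\sum_i a_i^2\psi_i-\tfrac14\sum_{I} a_I^2\,\delta_0^{I},$$
with the sum over subsets $I$ of the markings. I would then intersect using $\lambda_1\psi_i\psi_j$, $\lambda_1\psi_i\delta_0^I$, and $\lambda_1\delta_0^I\delta_0^J$ on $\overline{\mathcal{M}}_{1,3}$. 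Each of these reduces, by restricting to boundary strata with a genus one component, to $\int_{\overline{\mathcal{M}}_{1,1}}\lambda_1=\tfrac1{24}$ together with standard genus zero intersection numbers.

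The result is a polynomial in $(a,b)$ that is invariant under $\mathrm{GL}_2(\mathbb{Z})$ acting simultaneously on the rows and under permuting the three markings. I expect these invariances, together with a single test case, to force it to be a multiple of $\tfrac1{24}$ of the determinant expression $a_1b_2-a_2b_1$. This determinant is the same for any two of the three edges, by balancing. I would then match it to the factor in the statement, following the normalization used there for $(a_1,b_1),(a_2,b_2)$.

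\textbf{Main obstacle.} The hard part is Step 1: justifying that the virtual class on $\mathcal{G}_\gamma$ genuinely factors as the product of the $v_1$ contact cycle with the genus zero data, with no extra excess contribution. The concern is that the excess dimension $3$ of the cycle is entirely absorbed into $\TC_1$ at $v_1$. The second delicate point is the exact normalization of the integral in Step 2, namely the sign and the power of the determinant. I would fix it by computing the test case $a=(1,0,-1)$, $b=(0,1,-1)$ directly with admcycles-style strata intersections before matching to the stated formula.
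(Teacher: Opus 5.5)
Your Step 1 is the reduction the paper uses. The genus-zero components are rigid. The rubber-torus degree $L$ and the factor $\frac{1}{\ell(\gamma)}\prod_e w_e$ of Theorem \ref{thm:split} do not depend on the genus label. The genus-zero version of $v_1$ contributes $1$. Everything then reduces to $\int_{\overline{\mathcal{M}}_{1,3}}\TC_1(a,b,0)=-\int_{\overline{\mathcal{M}}_{1,3}}\lambda_1\DR_1(a)\DR_1(b)$. Corollary \ref{cor:vanishing} plays no role here, since it only detects vanishing. Evaluating the integral by hand with Hain's formula, rather than with admcycles as the paper does, is a good substitute.

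The gap is the target of Step 2: the integral cannot be a multiple of $a_1b_2-a_2b_1$. It is symmetric under $a\leftrightarrow b$ and bihomogeneous of degree $(2,2)$, because each $\DR_1$ is quadratic in its contact vector. The determinant is antisymmetric of bidegree $(1,1)$. Your $\mathrm{SL}_2$-invariance argument therefore forces $c\,(a_1b_2-a_2b_1)^2$. Your test case $a=(1,0,-1)$, $b=(0,1,-1)$ has determinant $1$, so it cannot detect the exponent.

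Carrying out your computation confirms the square. Set $D_k=\delta_0^{\{i,j\}}$ for $\{i,j,k\}=\{1,2,3\}$. Hain's formula on compact type gives $\DR_1(a)=\frac12\sum_i a_i^2\psi_i-\frac12\sum_k a_k^2D_k$; the coefficient of $\delta_0^{\{1,2,3\}}$ vanishes since $\sum a_i=0$. The needed intersection numbers are:
\begin{itemize}
\item $\int\lambda_1\psi_i^2=\frac1{24}$, and $\int\lambda_1\psi_i\psi_j=\frac1{12}$ for $i\neq j$;
\item $\int\lambda_1\psi_iD_k=\frac1{24}$ if $i=k$, and $0$ otherwise;
\item $\int\lambda_1D_k^2=-\frac1{24}$, and $D_k\cap D_l=\emptyset$ for $k\neq l$.
\end{itemize}
Together these give
$$\int_{\overline{\mathcal{M}}_{1,3}}\lambda_1\DR_1(a)\DR_1(b)=\frac1{48}\Big(\sum_{i\neq j}a_i^2b_j^2-\sum_k a_k^2b_k^2\Big)=\frac1{24}(a_1b_2-a_2b_1)^2 .$$
There is also a geometric check. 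Take the fibre $\mathrm{Bl}_{(0,0)}(E\times E)$ of $\overline{\mathcal{M}}_{1,3}\to\overline{\mathcal{M}}_{1,1}$ over a fixed $[E]$, with origin $p_1$. There $\lambda_1$ contributes $\frac1{24}$. The class $\DR_1(a)\DR_1(b)$ is the pullback of a point under the isogeny $(p_2,p_3)\mapsto(a_2p_2+a_3p_3,\,b_2p_2+b_3p_3)$. This isogeny has degree $(a_2b_3-a_3b_2)^2=(a_1b_2-a_2b_1)^2$. For instance, $a=(2,0,-2)$, $b=(0,1,-1)$ gives $\frac4{24}$, not $\frac2{24}$.

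So your argument, like the paper's proof (whose admcycles output is $-\frac1{24}(a_1b_2-a_2b_1)^2$), establishes
$$m_\gamma^{\mathrm{vir}}=-\frac1{24}(a_1b_2-a_2b_1)^2\,m_{\gamma'}^{\mathrm{vir}}.$$
The displayed statement has dropped the square. The squared form is what is used afterwards, through $|u_1\wedge u_2|^2$ in $E_0$ and Theorem \ref{thmA}. Your final step of matching the printed factor would fail whenever $|a_1b_2-a_2b_1|\geq 2$; you should conclude with the square.
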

		
		\begin{proof}
		 	Write $ a = (a_1, a_2 , -a_1-a_2), b = (b_1, b_2, -b_1-b_2)$.Applying admcycles\footnote{In fact, this is a calculation given as an example in \cite[Secton 4.1]{admcycles}.}, we obtain Then 
			$$\int_{[\overline{\mathcal{M}}_{1,3}]} -\lambda_1 \DR_1(a) \DR_1(b) = -\frac{1}{24}(a_1b_2-a_2b_1)^2$$
		
			The corresponding genus $0$ calculation gives $1$. The factor $L$ from above, as well as the factor $\frac{1}{\ell(\gamma)} \prod_{e \in E(\gamma)} w(e)$ in Theorem \ref{thm:split} does not depend on the genus markings. 
		
		\end{proof}
		
		Summing the contributions gives the following result:

		\begin{proposition}
			The total contribution to the logarithmic Gromov--Witten invariant coming from genus $1$ parametrised tropical curves with a genus $1$ vertex is
			$$-\frac{1}{24}E_0 = -\frac{1}{24}\sum_{\gamma}\sum_{v \in V(\gamma)} |u_1 \wedge u_2|^2M_\gamma,$$
			where the sum is over genus $0$ tropical curves going through the same affine conditions. 
		\end{proposition}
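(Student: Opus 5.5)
The plan is to assemble the preceding lemmas into a sum over rigid tropical types with a genus one vertex, and then reorganise that sum over genus $0$ types. By Proposition \ref{prop:decomp}, the contribution of these types to $\Linv$ is $\sum_\gamma M_\gamma m_\gamma^{\vir}$. The sum runs over rigid genus $1$ types $\gamma$ whose cycle is contracted to a genus $1$ vertex $v_1$. As shown above, the underlying graph of such a $\gamma$ is a trivalent tree. I will set up a bijection between these types and pairs $(\gamma', v)$. Here $\gamma'$ is a genus $0$ tropical type through the same general affine conditions, and $v \in V(\gamma')$ is a vertex. The map sends $\gamma$ to the type $\gamma'$ obtained by relabelling $v_1$ with genus $0$, together with $v = v_1$. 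The inverse places the genus label $1$ at $v$.

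Two points need checking for this bijection.
\begin{itemize}
\item Rigidity is preserved both ways. The genus label does not change the cone $\sigma_\gamma$ in the tropical moduli space, nor the evaluation map \eqref{eqn:ev'}.
\item Bivalent genus $1$ vertices can be discarded. Their contribution is zero by the $\DR_1(0)^2 = \lambda_1^2 = 0$ argument. So only trivalent $v$ occur. By Lemma \ref{lem:no4}, every vertex of $\gamma'$ is trivalent, apart from bivalent vertices sitting on codimension one cones, which contribute nothing.
\end{itemize}
Since $\sigma_\gamma = \sigma_{\gamma'}$ and the evaluation maps agree, we have $M_\gamma = M_{\gamma'}$.

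Next I apply the preceding lemma, $m_\gamma^{\vir} = -\tfrac{1}{24}\,|u_1\wedge u_2|^2\, m_{\gamma'}^{\vir}$, where the admcycles computation supplies the square. I must check that the expression is independent of which two edges at $v$ are chosen. Balancing gives $u_3 = -u_1-u_2$, so $u_1\wedge u_3 = -u_1\wedge u_2$, and the square is symmetric. I also need to interpret $|u_1\wedge u_2|$ for vectors in $\mathbb{Z}^3$. The two edge directions span a rank two sublattice; I choose a basis of its saturation, and the DR computation uses the $2\times 2$ minor in that basis. This minor equals the lattice area of the parallelogram, that is, the index of $\langle u_1,u_2\rangle$ in its saturation. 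The third contact vector is $0$ in adapted coordinates, which is exactly the $\TC_1(a,b,0)$ situation of the lemma.

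Finally I use that genus $0$ virtual multiplicities are enumerative. For the rigid trivalent genus $0$ types through general conditions, the strata are unobstructed, so $m_{\gamma'}^{\vir} = 1$; this matches the statement in the proof above that ``the corresponding genus $0$ calculation gives $1$''. Summing over $(\gamma',v)$ then gives
\[
\sum_\gamma M_\gamma m_\gamma^{\vir} = -\tfrac{1}{24}\sum_{\gamma'}\sum_{v\in V(\gamma')}|u_1\wedge u_2|^2 M_{\gamma'} = -\tfrac{1}{24}E_0 .
\]

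I expect the main obstacle to be the factor bookkeeping in the gluing of Theorem \ref{thm:split}. This has three parts:
\begin{itemize}
\item checking that the étale degree $\tfrac{1}{\ell(\gamma)}\prod w_e$ is the same for $\gamma$ and $\gamma'$;
\item checking that the rubber-torus covering degree $L$ is the same for $\gamma$ and $\gamma'$;
\item confirming, via Corollary \ref{cor:vanishing} and the rigidification, that the virtual class of $\mathcal{G}_\gamma$ restricts to the $\overline{\mathcal{M}}_{1,3}$ factor as $\TC_1$ times the genus zero data.
\end{itemize}
The first two hold because neither depends on the genus labels. For the third, my approach is to argue that the other components $C_i$ are rigid, so that $\mathcal{M}_{v_i}$ is a point. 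The virtual class of $\mathcal{M}_{v_1}$ is then the toric contact cycle pushed forward along the evaluation map, which is finite of degree $L$.
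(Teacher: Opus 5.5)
Your overall route matches the paper's. The proposition is the relation $m_{\gamma}^{\mathrm{vir}} = -\tfrac{1}{24}|u_1\wedge u_2|^2\, m_{\gamma'}^{\mathrm{vir}}$ summed over the relabelling correspondence $\gamma\leftrightarrow(\gamma',v)$, after discarding bivalent genus-one vertices via $\DR_1(0)^2=\lambda_1^2=0$. Your checks on the relabelling are correct and more explicit than the paper's:
\begin{itemize}
\item the genus label changes neither the cone nor the evaluation map, so rigidity and $M_\gamma=M_{\gamma'}$ are preserved;
\item the square is independent of the chosen pair of edges;
\item $|u_1\wedge u_2|$ is the $2\times 2$ minor in a basis of the saturated plane.
\end{itemize}

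The step that does not hold up is $m_{\gamma'}^{\mathrm{vir}}=1$ ``because the strata are unobstructed''. Unobstructedness only identifies the virtual class with the fundamental class. The degree of a zero-dimensional stratum is then a point count weighted by $1/|\Aut|$. In the paper's own reduction, $m_{\gamma'}^{\mathrm{vir}}$ carries, besides the genus-zero integral, the genus-independent factors $L$ and $\frac{1}{\ell(\gamma)}\prod_e w_e$ from Theorem~\ref{thm:split}. Nothing you say makes this product equal to $1$.

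You have also misread the paper here. Its sentence ``the corresponding genus $0$ calculation gives $1$'' refers to the integral over $\overline{\mathcal{M}}_{0,3}$ of the genus-zero contact cycle, the counterpart of the $\overline{\mathcal{M}}_{1,3}$ integral. The lemma is stated as a ratio precisely so that $L$ and the gluing degree cancel without being computed.

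You do not need the claim. Summing gives $-\tfrac{1}{24}\sum_{\gamma'}\sum_{v}|u_1\wedge u_2|^2 M_{\gamma'}m_{\gamma'}^{\mathrm{vir}}$, with each genus-zero type weighted by its own contribution to the genus-zero invariant. This is how $E_0$ is actually used: in Section~\ref{sec:examples} it is computed by reweighting the full genus-zero floor-diagram multiplicities.

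Relatedly, in your last bullet $\mathcal{M}_{v_i}$ is not a point. Only the source $C_i$ is rigid; the maps form a torsor under the dense torus. It is this freedom, modulo the rubber torus, that produces the $L$ solutions.
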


		\begin{definition}
			For each vertex $v$, call $(a_1b_2-a_2b_1)^2$ the \emph{vertex multiplicity} of $v$. 
		\end{definition}
		
		The well-spaced contribution vanishes by the following lemma. 

		\begin{lemma}
			Suppose $h: \Gamma \rightarrow \mathbb{R}^r$ is a genus 1 tropical curve with a single genus 1 vertex meeting general conditions. Then $(h, \Gamma)$ is not well-spaced. 
		\end{lemma}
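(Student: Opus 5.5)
The plan is to unwind Definition~\ref{def:wellspaced} in the case where the cycle $\Gamma_0$ is a single genus~$1$ vertex $v_1$, so that $h(\Gamma_0)$ is the point $h(v_1)$. Every plane $H$ through $h(v_1)$ contains $h(\Gamma_0)$, so well-spacedness must hold for all such planes simultaneously. First I would replace $(h,\Gamma)$ by the genus~$0$ curve $(h',\Gamma')$ obtained by changing the genus label of $v_1$ to $0$. This curve meets the same general affine conditions. By Lemma~\ref{lem:no4}, $\Gamma'$ is trivalent away from bivalent vertices, and no two ends are collinear. By generality for edge lengths, the fixed bounded edges have distinct lengths, and more generally distinct path lengths from $v_1$ along distinct fixed paths. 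As in the discussion at the start of Section~\ref{sec:exdim3}, dimension counting shows every bounded edge is fixed.

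Next I would choose a plane that witnesses the failure of well-spacedness. Since $v_1$ has valence at most $3$, I would take $H$ to be a plane through $h(v_1)$ containing the directions $u_1,u_2$ of two of the half-edges at $v_1$; if $v_1$ is trivalent, balancing puts the third direction in $H$ as well. I must also handle the case where $h$ contracts edges adjacent to $v_1$. Here I would use the fact that, by generality, a contracted edge occurs only when it carries marked points forcing it, and I would argue that the relevant ``neighbourhood'' is then the first non-contracted layer. With this choice, every open neighbourhood of $\Gamma_0$ maps into $H$, so condition (i) of Definition~\ref{def:wellspaced} fails. Well-spacedness would therefore require condition (ii): at the minimal distance $\delta$ from $v_1$ at which $h(\Gamma)$ leaves $H$, it must leave along at least three edges.

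The key step is to show that this minimum is achieved exactly once and that the curve leaves there through a single edge. The points where $h(\Gamma)$ first leaves $H$ lie on the branches emanating from $v_1$. The distance to each is a sum of lengths of fixed edges along a path from $v_1$. By generality for edge lengths, these sums are pairwise distinct for distinct paths, so the minimum is attained at a unique vertex $w$. Since $w$ is trivalent, and the edge by which the path arrives lies in $H$, the curve leaves $H$ at $w$ along at most two edges. Balancing at $w$ rules out exactly one edge leaving $H$ while the others stay in $H$, so exactly two edges leave, which is fewer than three. Condition (ii) therefore fails, and $(h,\Gamma)$ is not well-spaced.

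I would also need to treat the bivalent-$v_1$ case, where the two adjacent directions are opposite and span only a line. In that case, $H$ can be chosen as any plane containing this line together with the direction in which the curve first leaves the line. The same distinct-lengths argument then applies; if the curve never leaves the line, Lemma~\ref{lem:no4}(ii) gives a contradiction. The step I expect to be the main obstacle is making the edge-length generality argument rigorous when the paths to the exit points share initial segments. Two exits on different branches of a common vertex $w'$ differ by sums over disjoint sets of fixed edges, and I would apply the generality hypothesis to those two collections to conclude that the minimum is still attained only once.
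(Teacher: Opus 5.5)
This is correct and is essentially the paper's argument. The paper also passes to the trivalent genus~$0$ curve through the same conditions, notes that the neighbourhood of the contracted cycle therefore lies in a plane, and uses generality of edge lengths (as in Lemma~\ref{lem:notwellspaced}) to see that the curve first leaves that plane at a unique trivalent vertex, so condition (ii) fails. Your handling of the bivalent genus~$1$ vertex, contracted edges and paths with shared initial segments fills in details the paper leaves implicit. Just correct the early phrase ``leaves there through a single edge'': at the unique exit vertex the curve leaves along exactly two edges, as you state later.
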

		
		\begin{proof}
			Similarly to Lemma \ref{lem:notwellspaced}, this follows from the generality of edge lengths. 
		\end{proof}
		
	This completes the proof of Theorem \ref{thmA}.

	\section{Examples} \label{sec:examples}
	
	In this section, we assume $X = \mathbb{P}^3$. We give methods for calculation and example computations. 
	
	\subsection{Floor diagrams} \label{sec:floor}
	
	We use floor diagrams to obtain a method for calculating $E_0$. 
 For genus $0$ curve counts in toric surfaces, \emph{floor diagrams} are obtained by stretching the point conditions in a chosen direction \cite{BM08}. Brugall{\'e}--Mikhalkin developed a generalisation of these techniques for higher dimensional targets \cite{BM07}. We will review this in the case of $\mathbb{P}^3$.
	
	\begin{definition}
		For any parametrised tropical curve, call the vertical edges \emph{elevators}. Removing the vertical edges, call the connected components of the resulting graph \emph{floors}. 
	\end{definition}
	
	Let $X \cong \mathbb{P}^3$. Consider tropical lines in $\mathbb{R}^3$ consisting of a vertex and four infinite ends.\footnote{This is a degenerate tropical line, in the sense of Mikhalkin. There is no difference in enumeration.} For a point condition, we will say that it has a (single) \emph{vertex} at the point. We can consider point and line conditions where the vertices of the conditions are contained in some cylinder 
	$$\mathcal{H} = \{|x|< \epsilon, |y| < \epsilon\}.$$ 
	We also require that the vertical positions of the vertices of any two distinct conditions have a difference of at least $E$. We may also position the conditions such that all the point conditions are above all the line conditions\footnote{Our orientation is swapped from Brugall{\'e}--Mikhalkin.}. 
	
	\begin{definition}
		Suppose we have a configuration $(h, \ell_i, p_j)$, where $\ell_i$ are tropical lines. Then a condition $\ell_i$ is \emph{vertical} if it meets the tropical curve on its vertical edge, and \emph{horizontal} otherwise. 
	Say that any point condition is horizontal. 
	\end{definition}

	Fix a degree $d$. For \emph{genus $0$} tropical curves, it can be shown that if $\epsilon << E$, then every degree $d$ tropical curve meeting the conditions is \emph{floor decomposed} i.e. it has the property that each floor meets exactly one horizontal condition. 
	
	As a consequence, the tropical curves meeting these conditions can be found using \emph{floor diagrams}. 
	
	\begin{definition}
		Let $\mathcal{D}$ be an oriented tree with $d$ infinite ends which are oriented outwards. For each vertex of $\mathcal{D}$,  the \emph{divergence} is the number of edges going outwards minus the number of edges coming inwards. If the divergence for each vertex is positive, call $\mathcal{D}$ a \emph{floor diagram}. 
	\end{definition}

	From a parametrised tropical curve, we can obtain a floor diagram by replacing floors with vertices. We obtain a \emph{marked floor diagram} by recording where the conditions $(\ell_i, p_j)$ are met as a function  function $\{1, \ldots, n\} \rightarrow E(\mathcal{D}) \cap V(\mathcal{D})$. A marked floor diagram is characterized by the following properties (we refer to point and line conditions, but the properties can be written purely in terms of the function):
	
	\begin{enumerate}
		\item Each vertex has at least one condition, and a distinguished element in the preimage which is a \emph{horizontal condition}. All other elements in the preimage are \emph{vertical conditions}. 
		\item If a vertex has a point condition, this condition must be a horizontal condition. 
		\item Each edge has at most one point condition, at most two line conditions, and never both a point and line condition. 
		\item Any condition on an edge is a horizontal condition. 
		\item The ordering of the horizontal conditions respect the orientation of the tree $\mathcal{D}$. 
	\end{enumerate}

	\begin{example}
	An example of a marked floor diagram is given in Figure \ref{fig:floordiagram}. Up to equivalence, there are 3 diagrams of this form (given by the changing the ordering of the vertical conditions).

	\begin{figure}[h]
		\centering
		\begin{tikzpicture}
			\draw (0,10) ellipse (2 and 1);
			\draw (-1,7.5) ellipse (1 and 0.6);
			\draw (1,5.5) ellipse (1 and 0.6);
			\draw (-1,9.15) -- (-1,8.1);
			\draw (1,9.15) -- (1,6.1);
			
			\draw (1.2,10.8) -- (1.2, 12.5);
			\draw (0.4,11) -- (0.4,12.5);
			\draw (-0.4,11) -- (-0.4,12.5);
			\draw (-1.2,10.8) -- (-1.2,12.5);
			
			\node at (0,10) {$2$};
			\node at (-1,7.5) {$1$};
			\node at (1,5.5) {$1$};
			
			\fill[coralpink] (-1.2, 11.75) circle[radius=2pt];
			\fill[coralpink] (-0.4, 11.75) circle[radius=2pt];
			\fill[coralpink] (0.4, 11.75) circle[radius=2pt];
			\fill[coralpink] (1.2, 11.75) circle[radius=2pt];
			
			\draw [forestgreen] plot [only marks, mark=square*] coordinates {(1,10) (-0.5,7.5) (1.5,5.5) (1, 7) (1,8)};
			\draw [forestgreen] plot [only marks, mark=x] coordinates {(0.5,5.5) (-1.4,7.5) (-1.6,7.5)};
		\end{tikzpicture}
		\caption{Marked floor diagram with $(a,b) = (8,4)$. Vertices are given by ovals with divergence indicated. Point conditions are given by pink circles. Horizontal line conditions are given by green squares, and vertical line conditions by green crosses.}
		\label{fig:floordiagram}
	\end{figure}
	
	\end{example}
	
	Once we have a marked floor diagram, we can try to reconstruct the original tropical curve and count how many ways this can be done (with multiplicity). Given a floor $f$, we can project it to the $x-y$ plane to obtain a tropical curve in $\mathbb{R}^2$ of degree equal to the divergence of this floor. Call this $h_f: \Gamma_f \rightarrow \mathbb{R}^2$. 
	
	The point and line conditions on $\Gamma$ impose point and line conditions on $\Gamma_f$ in the following way. 
	\begin{enumerate}
		\item On each floor, point conditions and vertical conditions impose point conditions. 
		\item A point conditions on an elevator adjacent to the floor $f$ imposes a point condition.
		\item Two line conditions on an elevator adjacent to the floor $f$ imposes a point condition.
		\item One line condition on an elevator adjacent to the floor $f$ will impose a point condition on the floor $f$ if and only if the floors above (if the elevator is above the floor) or below (if the elevator is below the floor) are fixed by conditions \emph{excluding this line}.
		\item All other line conditions do not impose any conditions on this floor. 
	\end{enumerate}
	
	\begin{remark}
		A line condition on an elevator can be viewed as a ``link'' between two floors. It imposes the condition that the two adjacent floors, when projected to $\mathbb{R}^2$ must intersect on a particular line. Meeting a line in $\mathbb{R}^2$ is a trivial condition, but generically a tropical curve will meet a line at a finite number of points. So if one floor is fixed, this creates a point condition on the other floor. 
	\end{remark}

	\begin{definition} 
		Suppose that $(\mathcal{D},m)$ is a marked floor diagram. For each floor $f$, impose point conditions as above. If the number of point conditions on each floor $f$ with divergence $d_f$ is not exactly $3d_f - 1$, then the multiplicity is 0. Otherwise, for each floor, let $\ell_f$ be the sum of: 
		\begin{enumerate}
			\item the number of line conditions on the floor or on adjacent elevators \emph{that do not impose point conditions on the floor}; and
			\item the number of unmarked adjacent elevators.
		\end{enumerate}
		Define 
		$$\mu(f) = d_f^{\ell_f} N_{d_f} $$
		where $N_{d_f}$ is the number of degree-$d_f$ planar curves through $3d_f -1$ points. The \emph{multiplicity} of the floor diagram is 
		$$\mu(\mathcal{D},m) = \prod_{f} \mu(f) \prod_e w(e)^{1+|m^{-1}(e)|}$$
		where the second product is over the elevators of $\mathcal{D}$. 
	\end{definition}

	\begin{theorem}\cite[Theorem 1]{BM07} \label{thm:BM}
		The genus $0$ Gromov--Witten invariant for curves in $\mathbb{P}^3$ with $a$ line and $b$ point conditions is given by
		$$GW_{(a,b)}^0 = \sum_{(\mathcal{D},m)} \mu(\mathcal{D},m)$$
		where the sum is over marked floor diagrams.
	\end{theorem}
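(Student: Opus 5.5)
This is Brugallé--Mikhalkin's floor-diagram theorem for genus $0$ curves in $\mathbb{P}^3$, so I would follow their strategy: a tropical correspondence theorem followed by a combinatorial degeneration of the constraints.

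\textbf{Step 1: tropical correspondence.} I would first apply Proposition~\ref{prop:decomp} in genus $0$. Genus $0$ tropical curves through general conditions are trivalent by Lemma~\ref{lem:no4} and are never superabundant. For them the virtual multiplicity $m_\gamma^{\vir}$ is $1$ after applying Theorem~\ref{thm:split}: each vertex moduli space is a point, and the edge factors cancel against $\ell(\gamma)$. This gives
$$GW^0_{(a,b)} = \sum_\gamma M_\gamma,$$
summed over rigid genus $0$ tropical curves through generic conditions. This uses the fact, due to Mandel--Ruddat \cite{MR20}, that ordinary and logarithmic genus $0$ invariants agree up to labelling of boundary points. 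I would normalise away that labelling factor.

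\textbf{Step 2: stretching the conditions.} Next I would place the vertices of the conditions in the thin cylinder $\mathcal{H}$, with vertical gaps at least $E \gg \epsilon$ and the points above the lines. The key claim is that every curve meeting these conditions is floor decomposed, meaning each floor carries exactly one horizontal condition. I would argue by a dimension count on the horizontal projection. A floor of divergence $d_f$ projects to a planar degree-$d_f$ curve, which moves in a family of dimension $3d_f-1$. Any non-vertical edge of the projection changes height by only $O(\epsilon)$ per unit of horizontal displacement. A floor touching two horizontal conditions separated vertically by $E$ would therefore need horizontal extent of order $E/\epsilon$, and I expect this to contradict balancing inside the degree-$d$ Newton polytope.

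\textbf{Step 3: reconstruction and multiplicity.} Given floor decomposition, the floor diagram together with the marking function is determined, and it satisfies properties (1)--(5). Conversely, I would reconstruct curves from a marked floor diagram floor by floor, processing floors in order. Each floor's projection is a planar curve through the induced point conditions, counted by $N_{d_f}$. A line condition on an elevator becomes a point condition once the neighbouring floor is fixed. Otherwise its intersection with the planar projection contributes $d_f$ choices, which gives the factor $d_f^{\ell_f}$. Each unmarked elevator has to be attached to a floor at one of $d_f$ tropical intersection points, each contributing weight.

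The multiplicity $M_\gamma$ is the determinant of the evaluation map~\eqref{eqn:ev'}. I would compute it in block-triangular form: order the coordinates by floors and elevators, so that the diagonal blocks are the planar evaluation determinants and the elevator weights $w(e)^{1+|m^{-1}(e)|}$. The planar blocks sum to $N_{d_f}$ by Mikhalkin's planar correspondence \cite{M05}. Assembling these gives $\mu(\mathcal{D},m)$.

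\textbf{Main obstacle.} I expect Step 2, floor decomposition, to be the hardest step, together with checking that the determinant in Step 3 really is block triangular. For both I would follow \cite[Theorem 1]{BM07}.
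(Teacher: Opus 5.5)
The paper gives no argument of its own for this statement. It quotes Brugall\'e--Mikhalkin, and your outline (genus $0$ correspondence, vertical stretching, floor-by-floor reconstruction) is the architecture of that cited proof. The problem is Step 2, which you rightly call the heart of the matter: the mechanism you sketch there would not work.

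Two things go wrong in that step.

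\emph{The slope bound is wrong.} A non-vertical edge of a degree-$d$ curve in $\mathbb{R}^3$ has integer direction $(u_1,u_2,u_3)$ with $(u_1,u_2)\neq 0$ and $|u_3|\le d$. Its slope is therefore bounded by $d$, not by $O(\epsilon)$. The width of $\mathcal{H}$ only enters once you know where the vertices are.

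\emph{Balancing does not bound horizontal extent.} Together with the degree, balancing places no bound on how far the curve reaches horizontally, because bounded edges can be arbitrarily long. Already a tropical line in $\mathbb{R}^3$ with one bounded edge can reach as far from $\mathcal{H}$ as you like. So there is no contradiction ``inside the Newton polytope''.

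What is actually needed is the confinement lemma that the paper quotes in its genus-one discussion. It says that every vertex of a genus $0$ curve through conditions whose vertices lie in $[-A,A]^2\times\mathbb{R}$ itself lies in $[-A,A]^2\times\mathbb{R}$. Its proof rests on rigidity: genus $0$ curves through general conditions have no deformations beyond the expected ones.

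Only once you have confinement (take $A=\epsilon$) does every non-vertical bounded edge have vertical extent $O(d\epsilon)\ll E$. It then follows that the vertical gaps between conditions must be crossed by elevators and that floors are vertically thin. A dimension count then gives exactly one horizontal condition per floor.

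A quick sanity check confirms the gap is real. The contradiction you propose, with balancing, uses neither genus $0$ nor non-superabundance. It would therefore equally give floor decomposition for genus $1$ curves. The paper notes that floor decomposition fails there, precisely because the confinement lemma breaks down under superabundance.
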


	\begin{warning}
		We typically drop the labelling of the infinite ends when counting tropical curves in $\mathbb{P}^3$, which accounts for a factor of $(4d)!$. 
	\end{warning}
	
	This can be modified for our needs by inputting the modified invariants for $h_f \rightarrow \mathbb{R}^2$ (where each vertex is weighed by $\frac{1}{24} |u_1 \wedge u_2|^2$) into the recursion.

	\subsection{Floor diagrams in genus $1$}
	
	We have a natural definition of a floor diagram in higher genus. 
	
	\begin{definition}
		Let $\mathcal{D}$ be a oriented graph with $d$ infinite ends which are oriented outwards. Impose that the divergence of each vertex is positive. Let $g: V(\mathcal{D}) \rightarrow \mathbb{Z}_{\geq 0}$ be a genus function. Let $g = \sum_{v \in V(\mathcal{D})} g(v)$ and call $(\mathcal{D},g)$ a \emph{generalised floor diagram of genus $g$}. 
		
		A \emph{marked (generalised) floor diagram} is a floor diagram $(\mathcal{D},g)$ with marking function 
		$$m: \{1, \ldots, n\} \rightarrow V(\mathcal{D}) \cup E(\mathcal{D}).$$
	\end{definition}

	One difficulty in the positive genus case is that it is no longer true that tropical curves meeting sufficiently vertically stretched conditions are floor decomposed. A key lemma that fails is the following.\footnote{This lemma appears in an unpublished version of \cite{BM07}.} 
	
	\begin{lemma}[Brugall{\'e}--Mikhalkin]
		Suppose that there are $a$ tropical line and $b$ point constraints, with $4d=a+2b$, with vertices contained in the region $[-A,A]^2 \times \mathbb{R}$. Then for any genus 0 tropical curve $(h,\Gamma)$ passing through these conditions, for all vertices $v$ of $\Gamma$, we have $h(v) \in [-A,A]^2 \times \mathbb{R}$. 
	\end{lemma}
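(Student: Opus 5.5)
We argue by a convexity (maximum principle) argument on the tropical curve, using the degree and the trees that the conditions cut out. Assume some vertex $v$ has $h(v)\notin[-A,A]^2\times\mathbb{R}$. After composing with a symmetry of the degree-$d$ standard fan ($x\leftrightarrow y$, or the linear maps permuting $e_1,e_2,-(e_1+e_2+e_3)$ projected appropriately), we may assume there is a linear functional $\phi$ with $\phi(h(v))>\max_{[-A,A]^2\times\mathbb{R}}\phi$. The natural choices are $\phi=x$, $\phi=y$, or $\phi=-x$, $\phi=-y$. Note that $\phi$ is constant on vertical directions, so $\phi$ is bounded on the region containing all the condition vertices.

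Consider the closed subset $U=\{p\in\Gamma:\phi(h(p))>A\}$ and a connected component $K$ of it containing $v$. Since $\Gamma$ is a tree, $K$ is a subtree. We do two counts.

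\textbf{Step 1 (rigidity forces conditions on $K$).} The ends of $\Gamma$ lying in $K$ have direction with $\phi\ge 0$. By balancing, summed over the boundary of $K$, the edges crossing the hyperplane $\phi=A$ into $K$ carry total $\phi$-flux equal to the total $\phi$-slope of ends leaving $K$. Next we cut $\Gamma$ along the $k$ boundary edges of $K$ to obtain a genus $0$ piece $K$ with $k$ new ends, and we count its deformation dimension. Let $n_K$ be the number of unbounded ends of $K$. The piece $K$ has $3$-dimensional parameter space of dimension $n_K+k-1$ (genus $0$ in $\mathbb{R}^3$ contributes $N+(r-3)$ with $r=3$, i.e.\ the number of ends minus $1$, up to the translation count). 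Condition points and line-vertices all lie outside $U$. Hence the only constraints on $K$ are line conditions whose non-vertex legs meet $K$, and these are legs of direction $e_1,e_2$ or $-(1,1,1)$, which are unbounded in $\phi$. Each such leg cuts one dimension.

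\textbf{Step 2 (count).} Rigidity of $\Gamma$ through general conditions means that after fixing the complement, the boundary edges of $K$ impose at most $2k$ constraints (positions in the transverse plane), so $K$ must be rigid relative to them. This gives $n_K+k-1\le 2k+\#\{\text{line legs meeting }K\}$. We then bound $n_K$ from below by the $\phi$-flux: every end of the standard degree with $\phi>0$ has $\phi$-slope $1$, and each crossing edge carries $\phi$-flux at most its weight. Combining these, we aim to show that some translation of $K$ in a direction $w$ with $\phi(w)>0$ (e.g.\ along the flux) preserves all constraints, since line legs in direction $e_1$ with $\phi=x$ still meet a translate. This contradicts rigidity. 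Concretely, we try the translation $w=e_1$ (for $\phi=x$): ends and line legs parallel to $e_1$ are unaffected, and we must check that the remaining incidences can be re-solved.

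\textbf{Main obstacle.} The delicate part is Step 2, namely producing the deformation. The boundary edges of $K$ attach to the rest of the curve, so translating $K$ requires lengthening or shortening those edges. This works only when each boundary edge direction has positive $\phi$-component and the lengthening is compatible with the line legs. I expect to handle this by choosing $K$ to be the component of the superlevel set at the maximal value of $\phi$ on vertices, so that $K$ reduces to a single vertex $v$ all of whose bounded edges point into $\phi<\phi(h(v))$. By genus $0$ there are no cycles to obstruct moving $v$ in the direction of $e_1$. Combined with the trivalence from Lemma~\ref{lem:no4}, at least one unbounded end at $v$ lets $v$ slide, giving a one-parameter deformation through the conditions and hence the contradiction. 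Genus $0$ is used exactly here, which is consistent with the paper's remark that the lemma fails in genus $1$.
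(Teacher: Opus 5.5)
There is a genuine gap. The paper gives no proof of this lemma: it is quoted from an unpublished version of Brugall\'e--Mikhalkin, with only the remark that it needs the absence of unexpected deformations. So your argument has to stand on its own, and as written it does not.

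\textbf{Steps 1--2 cannot produce the contradiction.} The count is off. A trivalent genus-$0$ tree in $\mathbb{R}^3$ with $N$ legs moves in an $N$-dimensional family; $N-1$ is the planar count. With $k$ crossing edges, $n_K$ ends and $m_K$ marked points on line legs, rigidity therefore only gives $n_K\le k+m_K$, while the flux gives $n_K\ge k$. These bounds are compatible. For example, take a single vertex with an $e_1$-end, an $e_2$-end meeting the $e_1$-leg of a line, and a crossing edge of direction $-(e_1+e_2)$; then $n_K=2$ and $k=m_K=1$. In the plane the same count forces at most one end per component, and flux then rules out vertices. In $\mathbb{R}^3$ that margin disappears, so you must exhibit an actual deformation.

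\textbf{The fallback in your last paragraph fails as stated.}
\begin{itemize}
\item The maximum of $x$ over vertices is in general attained on a whole subtree. Vertices can be joined by edges of $x$-slope $0$ (directions $e_2$, $e_3$, $e_2+e_3$, $e_2-e_3$, \dots, possibly through marked points). So $K$ need not reduce to one vertex.
\item Even for a single vertex, $v$ cannot be moved in the direction $e_1$: its bounded edge has fixed direction and a fixed far endpoint. Having an unbounded end is not what permits motion.
\item You never check that the edges at $v$ carry no obstructing conditions. An $e_2$- or $e_3$-end at $v$ can meet the $e_1$-leg of a line at $x>A$.
\item Genus $0$ does not enter through the absence of cycles.
\end{itemize}

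\textbf{What works is to deform the whole maximal level set.}
\begin{itemize}
\item Let $x_{\max}>A$ be the maximum of $x$ over all vertices, marked ones included. Let $T$ be the set of vertices at that level joined by edges of $x$-slope $0$.
\item In $\{x>A\}$ the only pieces of conditions are $e_1$-legs of lines. An $e_1$-end at a vertex of $T$ carries no marked point, since that point would exceed $x_{\max}$.
\item All other edges at a vertex of $T$ have $x$-slope $\le 0$. Balancing then shows each vertex of $T$ is of one of two kinds. First kind: all three edges have $x$-slope $0$. Second kind: it has an $e_1$-end, an edge $c$ leaving $T$ with $c_x=-1$ (bounded, or an $e_0$-end), and one edge in $T$.
\item Fix everything outside $T$. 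Translate first-kind vertices by $te_1$ and move second-kind vertices by $-tc$.
\item Check consistency. For an edge $w=u+\ell g$ with $g=-(e_1+c)$ from a second-kind $u$ to a first-kind $w$, one gets $(w+te_1)-(u-tc)=(\ell-t)g$. Between two second-kind vertices one gets $(\ell-2t)g$. So for small $t>0$ this is a deformation of the same combinatorial type.
\item Check the conditions. Edges leaving $T$ stay on their lines, and marked points in $T$ slide along $e_1$-legs, so all conditions are still met.
\item The faces $y=A$, $x=-A$, $y=-A$ are handled identically, with $e_2$, $e_0$, $e_0$ in place of $e_1$.
\end{itemize}

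Genus $0$ is used only at the end. For general conditions, the evaluation map on the cone of a genus-$0$ type is an isomorphism, since the cone dimension $4d+a+b-\Ov$ must surject onto the $(2a+3b)$-dimensional condition space. Hence a nonzero deformation through the conditions is impossible. The push itself is insensitive to cycles and still exists in genus $1$. There, however, superabundant types meet general conditions in positive-dimensional families, which is exactly why the lemma fails in genus $1$.
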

	
	The proof of the lemma requires that there are no unexpected dimensions of deformation, and this fails because of superabundancy. Thus we must allow horizontal line conditions to occupy the same floor as other horizontal conditions. It is significantly harder to compute multiplicities (even without the issue of superabundancy, since floor diagrams do not encode enough information to know if a loop can close up), but we can still use marked floor diagrams to list the relevant tropical types. We give examples of this procedure in proceeding sections. 
	
	\begin{remark}
		We speculate that the cases that appear can be severely constrained. We do not explore this here. 
	\end{remark}

	\subsection{Counts in low degree}
	
	We prove Corollary \ref{cor:logandordinary}. The difference in the Gromov--Witten count $GW_{a,b}$ and logarithmic Gromov--Witten count $L_{a,b}$ is controlled by the terms $E_0$ and $E_1$. These are the modified genus 0 count and the excess well-spaced count respectively. More precisely, (up to a factor of $(d!)^4$ given by labelling the markings corresponding to infinite ends), the difference is $E_1 + (E_0 - GW_{a,b}^0)$.
	
	\begin{proposition} \label{prop:phase}
		Let $E_0,E_1$ be as above. Then the difference in Gromov--Witten and logarithmic Gromov--Witten count (when corrected by a factor of $(d!)^4$) is positive precisely for the $(a,b)$ given in Figure  \ref{fig:phasetransition}. Here $d \in \mathbb{Z}_{>0}, a,b \in \mathbb{Z}_{\geq 0}, a+2b = 4d$. 
	\end{proposition}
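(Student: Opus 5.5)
The plan is to rewrite the difference as a sum of two tropical quantities, each computable by floor diagrams. Combining Theorem \ref{thmA} with the Getzler--Pandharipande relation $n_{a,b} = GW^1_{a,b} + \frac{2d-1}{12}GW^0_{a,b}$ and $W_{a,b} = (d!)^4 n_{a,b}$ gives
$$(d!)^4 GW^1_{a,b} - L_{a,b} = E_1 + \tfrac{1}{24}\Bigl(E_0 - \sum_{\gamma}\sum_{v \in V(\gamma)} M_\gamma\Bigr),$$
where the last sum is the rewriting of $(d!)^4\frac{2d-1}{12}GW^0_{a,b}$ as $\frac{1}{24}\sum_\gamma\sum_v M_\gamma$. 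First I will justify that rewriting. A trivalent genus $0$ curve of degree $d$ with $4d$ unbounded ends has $4d-2$ vertices, so the weighted sum equals $(4d-2)$ times the genus $0$ tropical count. By Mikhalkin/Nishinou--Siebert correspondence together with the labelling factor, this count is $(d!)^4 GW^0_{a,b}$, and indeed $\frac{4d-2}{24} = \frac{2d-1}{12}$. Hence the correction term is
$$E_0 - \sum_\gamma\sum_v M_\gamma = \sum_\gamma\sum_v \bigl(|u_1\wedge u_2|^2 - 1\bigr)M_\gamma .$$
Both summands in the displayed difference are nonnegative: $E_1$ is a sum of $M_\gamma w_\gamma \ge 0$, and every trivalent vertex has $|u_1\wedge u_2|\ge 1$. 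This already gives the inequality of Corollary \ref{cor:logandordinary}.

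Next I will show strictness exactly for the $(a,b)$ in Figure \ref{fig:phasetransition}. The difference is positive precisely when either of two things happens:
\begin{itemize}
\item some genus $0$ tropical curve through the conditions has a vertex of multiplicity $|u_1\wedge u_2|\ge 2$;
\item some well-spaced non-rigid curve with $w_\gamma>0$ exists, making $E_1 > 0$.
\end{itemize}
The computation uses the stretched configurations of Section \ref{sec:floor}. Through Theorem \ref{thm:BM}, with each floor carrying the planar weighted count of vertex multiplicities, I will enumerate marked floor diagrams. Vertices of multiplicity at least $2$ arise from elevators of weight at least $2$, or from planar floor curves of degree $d_f\ge 2$ carrying a multiplicity $\ge 2$ vertex. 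The latter already occurs for planar cubics and for conics tangent to elevators in the appropriate way. I will tabulate the small cases $d\le 3$ directly and show positivity in all larger cases by exhibiting one explicit diagram. For example, a floor of divergence $2$ paired with an elevator of weight $2$, or a floor of divergence $\ge 3$, whose existence is possible once $a$ or $b$ is large enough.

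The main obstacle is the vanishing direction. For the finitely many excluded $(a,b)$ (low degree, e.g.\ $d\le 2$, and $d=3$ with many points), I must show that every genus $0$ vertex has unit multiplicity and that $E_1=0$. For $E_0$, the floor-diagram enumeration is complete and I will check it case by case. For $E_1$, I will use a stretched configuration with genus $1$ marked generalised floor diagrams to list candidate superabundant types. Their weights come from the formulas of Section \ref{sec:winv}, such as the factors $(a-1)$ and $i(P)$. For low degree all edge weights are $1$, so these weights vanish. Care is needed there because floor decomposition can fail in genus $1$, so the argument must bound the possible non-floor-decomposed planar cycles directly using degree.
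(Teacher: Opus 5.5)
Your reduction matches the paper's. The difference is $E_1+\frac{1}{24}\sum_\gamma\sum_v(|u_1\wedge u_2|^2-1)M_\gamma$, and both pieces are nonnegative.

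\textbf{Main gap: the argument that $E_1=0$ in the unshaded cases.} You claim the well-spaced weights vanish because they carry factors $(a-1)$ or $i(P)$ and low-degree edges have weight $1$. This does not work, for two reasons.
\begin{enumerate}
\item Consider a cycle spanning a plane $H$ whose neighbourhood also lies in $H$ (excess dimension $1$, case (a) in Section \ref{sec:winv}). Its weight is just the loop multiplicity, which is $\ge 1$ and has no vanishing factor. Such non-rigid curves count toward $W$ but not $L$, so any one of them through the conditions makes the difference positive.
\item Where the factors do appear, they are not controlled by primitivity of edges. Here $a$ is a lattice distance to $H$, or the weight of $\bar u$ in $\mathbb{Z}^3/(\mathbb{Z}^3\cap L)$, and $i(P)$ grows with $|\bar u_1\wedge\bar u_2|$. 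For example, $u=(0,1,-1)$ is primitive but lies at lattice distance $2$ from the plane spanned by $(1,0,0)$ and $(0,1,1)$.
\end{enumerate}
What is actually needed is non-existence, or domination by the genus $0$ piece. The paper splits $E_1=E_{1,1}+E_{1,2}$ by the span of the cycle.
\begin{itemize}
\item For $E_{1,1}$, flatten the cycle as in Lemma \ref{lem:simplify}. This gives a genus $0$ curve through the same conditions with an edge of weight $\ge 2$. So $E_{1,1}>0$ already forces the genus $0$ piece to be positive, and no separate vanishing is needed.
\item For $E_{1,2}$, a planar cycle inside one floor needs a floor of degree $\ge 3$. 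A cycle across floors needs a genus $1$ floor diagram, which is impossible for $d\le 2$. For $d=3$, the two candidates in Figure \ref{fig:acrossfloors} are excluded when $b\ge 3$: excess dimension $1$ leaves at most one floor or end without a horizontal condition, which would force three point conditions onto the divergence-$1$ top floor.
\end{itemize}
Your closing remark about bounding planar cycles ``using degree'' points in this direction, but the weight-vanishing mechanism you actually propose would not establish $E_1=0$.

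\textbf{Smaller gap: the positivity side.} ``Exhibiting one explicit diagram'' has to be done uniformly for infinitely many $(a,b)$. Positive multiplicity is a real constraint, because each floor must receive exactly $3d_f-1$ point conditions. For instance, at $d=3$ with $a\in\{0,2\}$, no marking of a diagram with a divergence-$3$ floor or a weight-$2$ elevator has positive multiplicity. The paper makes this uniform in three steps.
\begin{itemize}
\item The criterion (a divergence-$3$ floor or weight-$2$ elevator in a positive-multiplicity diagram) is monotone in $a$: replace the highest point condition by two line conditions.
\item This reduces everything to $a=0$.
\item The case $a=0$ is checked by hand in small degree, and for $d\ge 5$ it follows from Figure \ref{fig:degree5} together with the degree-raising step of Figure \ref{fig:pointconditions}.
\end{itemize}

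\textbf{Minor correction.} Conic floors never supply vertices of multiplicity $\ge 2$. They are dual to unimodular triangulations, and an elevator meeting a primitive floor edge gives a vertex whose multiplicity equals the elevator weight. So your ``conics tangent to elevators'' case is really just the weight-$2$ elevator case.
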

	
	\begin{figure}[h]
		\centering
		\begin{tikzpicture}
			
			\fill[coralpink] (2,-3) rectangle ++(1,1);
			\fill[coralpink] (0,-4) rectangle ++(1,1);
			
			  \foreach \i in {1,...,5}{
			  	\node at (-0.5,-\i+0.5) {$\i$};
			  	\pgfmathsetmacro{\endloop}{2*\i};
			  	\foreach \j in {0,...,\endloop}{
			  		\pgfmathsetmacro{\transition}{int(\j +2*\i)};
			  		\ifthenelse{\transition < 9}{
		  				}{
			  			\fill[coralpink] (\j, -\i) rectangle ++(1,1);
		  			}
					\draw[draw=black] (\j, -\i) rectangle ++(1,1);
				}
			}
			\foreach \j in {0,...,10}{
			\pgfmathsetmacro{\a}{int(2*\j)};
			\node at (\j+0.5, 0.5) {$\a$};
			}

			\draw[draw=black] (3,-3) rectangle ++(1,1);

			\draw[->] (0,1) -- (1,1);
			\node at (0.5,1.3) {$a$};
			\draw[->] (-1,0) -- (-1,-1);
			\node at (-1.3,-0.5) {$d$};
		\end{tikzpicture}
		\caption{Superabundant and well-spaced contributions are shown shaded.}
		\label{fig:phasetransition}
	\end{figure}
	
	The rest of this section is a proof of Proposition \ref{prop:phase}. For convenience, define
	$$\tilde{E}_0 = E_0 - (d!)^4GW_{a,b}^0.$$
	We can split the contribution $E_1$ into $E_{1,1} + E_{1,2}$ coming from superabundant curves where the span of the cycle is $1$ and $2$ respectively. 
	
	\begin{lemma}
		The difference $\tilde{E}_0$ is positive if and only if there is a marked genus 0 floor diagram of positive multiplicity with either
		\begin{enumerate}
			\item a floor of divergence 3; or 
			\item a weight 2 elevator. 
		\end{enumerate}
		Moreover $E_{1,1}$ is positive only if the above condition holds.  
	\end{lemma}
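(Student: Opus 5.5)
The plan is to write $\tilde{E}_0$ as a sum over genus $0$ tropical curves and compare the two weightings vertex by vertex. Choose the conditions stretched vertically, as in Section \ref{sec:floor}, so that every genus $0$ curve is floor decomposed. Then $E_0=\sum_\gamma\sum_{v}|u_1\wedge u_2|^2 M_\gamma$, while $(d!)^4GW^0_{a,b}=\sum_\gamma M_\gamma$. The Getzler--Pandharipande correction $\frac{2d-1}{12}GW^0_{a,b}$ is rewritten in the introduction as $\frac1{24}\sum_\gamma\sum_v M_\gamma$, so the natural reading is that $\tilde{E}_0$ compares $\sum_v |u_1\wedge u_2|^2$ with $\sum_v 1$. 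I will therefore work with the normalized difference
$$\sum_\gamma M_\gamma\sum_{v\in V(\gamma)}\bigl(|u_1\wedge u_2|^2-1\bigr).$$
Every vertex of a genus $0$ curve through general conditions is trivalent by Lemma \ref{lem:no4}. Its multiplicity $|u_1\wedge u_2|$ is therefore a positive integer, so each summand is nonnegative. Hence $\tilde{E}_0>0$ if and only if some contributing curve has a vertex with $|u_1\wedge u_2|\ge 2$.

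Next I translate ``vertex of multiplicity at least $2$'' into floor-diagram language. Vertices of $\Gamma$ are of two kinds. The first kind lies inside a floor. Its projection to $\mathbb{R}^2$ is a vertex of the planar curve $h_f$, and for floors the relevant multiplicity is the planar vertex multiplicity. The second kind is where an elevator of weight $w$ attaches to a floor. Here $u_1\wedge u_2$ involves the vertical vector $w e_3$ together with a primitive horizontal edge direction. I will check that the wedge norm at such a vertex is $w$ times the lattice length of the horizontal edge, which is $1$ for curves through generic conditions. This vertex is then higher-multiplicity exactly when $w\ge 2$, which accounts for case (2).

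For floors I use the planar count through $3d_f-1$ points. For $d_f\le 2$, every planar rational curve through generic points has all vertices of multiplicity $1$: for $d_f=1$ trivially, and for $d_f=2$ by inspecting the conics, whose rational tropical curves are all simple. For $d_f=3$, the nodal cubic count $N_3=12$ includes tropical curves with a multiplicity-$4$ vertex (a weight-$2$ edge). Since $N_3>0$, such curves genuinely occur on any floor of divergence $3$ with positive multiplicity, which gives case (1). Floors of divergence $d_f\ge 4$ are handled the same way: a curve with a non-simple vertex still exists, so I can reduce to the statement ``some floor of divergence $\ge3$''. If that phrasing is wanted literally, I will show that a diagram with a floor of divergence $\ge 4$ forces another admissible diagram with a divergence-$3$ floor or a weight-$2$ elevator. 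The converse direction requires that the multiplicity $\mu(\mathcal{D},m)$ be positive and that the weights be nonnegative. Then a single such curve forces strict positivity, since no cancellation occurs.

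For $E_{1,1}$, the cycle spans a line, so by Proposition \ref{prop:ex2} the curve flattens to a genus $0$ curve $(h',\Gamma')$ through the same conditions. In $(h',\Gamma')$ the doubled cycle edge becomes an edge of weight at least $2$. In the floor decomposition, either this edge is vertical, giving a weight-$2$ elevator, or it is horizontal inside a floor, forcing a planar curve with a weighted edge and hence $d_f\ge3$.

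The main obstacle is the floor-internal analysis. I need to justify that non-simple vertices occur on planar floors precisely from degree $3$ on, and that they survive the induced point conditions with positive multiplicity. I would settle this by direct enumeration of low-degree planar tropical curves through generic points.
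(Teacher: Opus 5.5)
Your argument is essentially the paper's proof. You compare $|u_1\wedge u_2|^2$ with $1$ vertex by vertex over floor-decomposed genus $0$ curves, and split vertices into floor vertices and elevator-attachment vertices. You then use that rational planar curves of degree at most $2$ through generic points are simple. For $E_{1,1}$ you flatten the cycle into a genus $0$ edge of weight at least $2$.

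Where you deviate, you improve on it:
\begin{enumerate}
\item Your normalization $\sum_\gamma M_\gamma\sum_v(|u_1\wedge u_2|^2-1)=E_0-(4d-2)(d!)^4GW^0_{a,b}$ is the combination that actually enters Corollary~B through $\frac{2d-1}{12}GW^0_{a,b}$. The literal $E_0-(d!)^4GW^0_{a,b}$ is trivially positive whenever $GW^0_{a,b}>0$.
\item The formula $w\cdot\gcd(u_1,u_2)$ at an elevator vertex replaces the paper's check over a list of directions. The horizontal edge there is primitive because the floor has divergence at most $2$, not because of genericity.
\item You notice that divergence $\ge 4$ and weight $\ge 3$ are silently absorbed into the statement. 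The application only needs the ``$\ge$'' version, so your promised reduction is optional.
\end{enumerate}
The ``only if'' direction and the $E_{1,1}$ claim go through as you describe.

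The genuine gap is the ``if'' direction for a floor of divergence $3$. This is the step you flag yourself, and the paper also only asserts it: it says the type of Figure~\ref{fig:degree3planar} appears and that the floor-vertex multiplicity does not depend on the last coordinate. There are two problems.

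First, $N_3=12>0$ does not produce a non-simple cubic through the particular induced configuration on that floor. An argument that works for every generic configuration: simple tropical cubics contribute $1$ both to $N_3=12$ and to the Welschinger count $W_3=8$. Hence some curve with a weight-$2$ edge or a vertex of multiplicity $3$ must occur.

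Second, the multiplicity in $E_0$ is three-dimensional. After the change of basis of $\mathbb{Z}^3$ used in the DR computation, $|u_1\wedge u_2|$ is the index of $\mathbb{Z}u_1+\mathbb{Z}u_2$ in its saturation, i.e.\ the gcd of the $2\times 2$ minors. This divides the planar determinant but can be strictly smaller, so your claim that ``for floors the relevant multiplicity is the planar vertex multiplicity'' is only safe in the ``only if'' direction. A planar weight-$2$ edge lifts to $(2p,2q,k)$, where $k$ is fixed by $z$-balancing on one side of the edge, hence by the elevators attached there. For odd $k$ its endpoints have multiplicity $1$: for instance $u_1=(2,0,k)$, $u_2=(-1,1,m)$ has minors $2$, $2m+k$, $-k$. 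So you still have to show that some contributing lift retains a vertex of multiplicity $\ge 2$. Two ways this could happen: $k$ is even for some realization, or an elevator attaches along the non-simple part, which gives multiplicity $2w$ at that vertex.
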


	\begin{proof}
		Fix $d, a, b$ such that $a + 2b = 4d$. Suppose that no marked genus 0 floor diagram of degree $d$ with $a$ line and $b$ point conditions has a floor of divergence $3$ nor a weight $2$ elevator. There is only one combinatorial type of planar tropical curve of degree 1 (and respectively degree 2) passing through 2 (respectively 5) point conditions. Both of these tropical types has the property that all the vertex multiplicities every edge has weight $1$, and every edge has direction vector (up to sign) one of 
		$$\{e_1,e_2,e_3, e_1\pm e_2, e_1 \pm e_3, e_2 \pm e_3\}.$$
		
		The vertices of the tropical curve can be read off from its floor diagram. They are
		\begin{enumerate}
			\item the vertices coming from the (non-divalent) vertices of $(h_f, \Gamma_f)$, for each floor $f$; and
			\item the vertices on the ends of an elevator $e$, for each elevator $e$.
		\end{enumerate}
		These two sets of vertices are disjoint by generality. For the vertices of type (1), the multiplicity $|a_1 b_2 - a_2 b_1|$ does not depend on the last coordinate. So if there are no floors of divergence greater than 2, all of the vertex multiplicities are 1. For the vertices of type (2), suppose that the elevator $e$ is connected to a edges $f_1, f_2$ in the adjacent floor, coming from an edge $f$ in $(h_f,\Gamma_f)$. Say $f$ has direction vector $u = (u_1, u_2)$. We have assumed that the weight of the edge $e$ is 1, so the directions of the edges adjacent to this vertex are $\pm e_3, (-u_1, -u_2, k)$, and $(u_1, u_2, k \mp 1)$. Considering all possible $u$ as listed above, we observe that the vertex multiplicity is always $1$. 
		
		Conversely, it is clear that if there is a weight 2 elevator, then the vertex multiplicity of the adjacent vertices is greater than $1$. If there is a floor of divergence $3$, the combinatorial type given in Figure \ref{fig:degree3planar} will appear in a floor, again producing a vertex with vertex multiplicity greater than 1. The result for $E_0$ follows. 
		
			\begin{figure}[h]
			\centering
			\begin{tikzpicture}[scale=0.5]
				\draw (0,0) -- (-2,0);
				\draw (-2,0) -- (-3,1);
				\draw (-2,0) -- (-3,1) -- (-4,1) -- (-5,0);
				\draw (-3,1) -- (-3,2); 
				\draw (-4,1) -- (-4,2);
				\draw (-2,0) -- (-3,-1);
				
				\draw (0,1) -- (0,0) -- (2,-1) -- (3,-1);
				\draw (2,-1) -- (3,-2) -- (4,-2);
				\draw (3,-2) -- (3,-3) -- (4,-3); 
				\draw (3,-3) -- (2,-4);

				\node at (-1,0.3) {$2$};
			\end{tikzpicture}
			\caption{A degree $3$ genus 0 tropical curve with non-trivial vertex multiplicity.}
			\label{fig:degree3planar}
		\end{figure}
		
		For $E_{1,1}$, consider a tropical curve meeting the conditions where the span of the cycle is $1$ dimensional. As in the proof of Lemma \ref{lem:simplify}, we work with the genus $0$ curve with the same image. We obtain a genus 0 floor diagram. However, the cycle of the original curve maps to an edge in the genus 0 graph of weight at least 2. The result follows as before. 
	\end{proof}

	Call this condition $(\star)$. We find when condition $(\star)$ holds. For degree $<2$, clearly the condition doesn't hold. For degree $3$ and $4$, it is straightforward check by hand. For degree $d = 5$ and $a = 0$ (all point conditions), the floor diagram given in Figure \ref{fig:degree5} shows that the condition holds. 
	
		\begin{figure}[h]
		\centering
		\begin{tikzpicture}[scale=0.5]
			\draw (0,10) ellipse (2 and 1);
			\draw (-1,7.5) ellipse (1 and 0.6);
			\draw (1,5.5) ellipse (1 and 0.6);
			\draw (-1,9.15) -- (-1,8.1);
			\draw (1,9.15) -- (1,6.1);
			
			\draw (0,11) -- (0,12.5);
			\draw (0.6,10.95) -- (0.6,12.5);
			\draw (1.2,10.8) -- (1.2,12.5);
			\draw (-0.6,10.95) -- (-0.6,12.5);
			\draw (-1.2,10.8) -- (-1.2,12.5);
			
			\node at (0,10) {$3$};
			\node at (-1,7.5) {$1$};
			\node at (1,5.5) {$1$};
			
			\fill[coralpink] (-1.2, 11.75) circle[radius=4pt];
			\fill[coralpink] (-0.6, 11.75) circle[radius=4pt];
			\fill[coralpink] (0, 11.75) circle[radius=4pt];
			\fill[coralpink] (0.6, 11.75) circle[radius=4pt];
			\fill[coralpink] (1.2, 11.75) circle[radius=4pt];
			\fill[coralpink] (-1, 8.6) circle[radius=4pt];
			\fill[coralpink] (1, 7.5) circle[radius=4pt];
			\fill[coralpink] (-1.5,7.5) circle[radius=4pt];
			\fill[coralpink] (0.5,5.5) circle[radius=4pt];
			\fill[coralpink] (1, 10) circle[radius=4pt];
		\end{tikzpicture}
		\caption{Marked floor diagram with $(a,b) = (10,0)$ with point conditions are given by pink circles.}
		\label{fig:degree5}
	\end{figure}

	We can use this to show that the condition holds for $d \geq 5, a = 0$ by using the procedure shown in Figure \ref{fig:pointconditions}. Each step increases the degree by $1$. 
	
		\begin{figure}[h]
		\centering
		\begin{subfigure}{0.18\textwidth}
			\begin{tikzpicture}
				\draw (0,10) ellipse (1 and 0.6);
				\draw (0.8,8) ellipse (0.6 and 0.4);
				\draw (-0.8,6) ellipse (0.6 and 0.4);
				
				\draw (0,10.6) -- (0,11.5);
				\draw (0.3,10.55) -- (0.3,11.5);
				\draw (-0.3,10.55) -- (-0.3,11.5);
				\draw (0.6,10.48) -- (0.6,11.5);
				\draw (-0.6,10.48) -- (-0.6,11.5);
				
				\draw (0.6,9.5) -- (0.6,8.37); 
				
				\draw (-0.6,9.5) -- (-0.6,6.37);
				
				\node at (0,10) {$3$};
				\node at (0.8,8) {$1$};
				\node at (-0.8,6) {$1$};
				
				
				\foreach \coord in {(0,11), (0.3, 11), (-0.3, 11), (0.6, 11), (-0.6, 11), (0.6,10), (1.1, 8), (-0.5, 6), (0.6,9), (-0.6, 9)}{
					\fill[coralpink] \coord circle[radius=2pt];
				}
				
				\node at (0,4.5) {$d=5$};
			\end{tikzpicture}
		\end{subfigure}
		\begin{subfigure}{0.18\textwidth}
			\begin{tikzpicture}
				\draw (0,10) ellipse (1 and 0.6);
				\draw (0.8,8) ellipse (0.6 and 0.4);
				\draw (-0.8,6) ellipse (0.6 and 0.4);
				\draw (-0.8,7.5) ellipse (0.6 and 0.4);
				
				\draw (0,10.6) -- (0,11.5);
				\draw (0.3,10.55) -- (0.3,11.5);
				\draw (-0.3,10.55) -- (-0.3,11.5);
				\draw (0.6,10.48) -- (0.6,11.5);
				\draw (-0.6,10.48) -- (-0.6,11.5);
				\draw (0.9, 10.25) -- (0.9,11.5);
				
				\draw (0.6,9.5) -- (0.6,8.37); 
				
				\draw (-0.6,9.5) -- (-0.6,7.87);
				\draw (-0.6, 6.37) -- (-0.6, 7.13);
				
				\node at (0,10) {$3$};
				\node at (0.8,8) {$1$};
				\node at (-0.8,6) {$1$};
				\node at (-0.8,7.5) {$1$};
				
				
				\foreach \coord in {(0,11), (0.3, 11), (-0.3, 11), (0.6, 11), (-0.6, 11), (0.6,10), (1.1, 8), (-0.5, 6), (0.6,9), (-0.5, 7.5), (0.9, 11), (-0.6, 6.75)}{
					\fill[coralpink] \coord circle[radius=2pt];
				}
				\node at (0,4.5) {$d=6$};
				\node at (-0.7,8.5) {$2$};
			\end{tikzpicture}
		\end{subfigure}
		\begin{subfigure}{0.2\textwidth}
			\begin{tikzpicture}
				\draw (0,10) ellipse (1 and 0.6);
				\draw (0.8,8) ellipse (0.6 and 0.4);
				\draw (-0.8,7.5) ellipse (0.6 and 0.4);
				\draw (-0.8,6) ellipse (0.6 and 0.4);
				
				\draw (0,10.6) -- (0,11.5);
				\draw (0.3,10.55) -- (0.3,11.5);
				\draw (-0.3,10.55) -- (-0.3,11.5);
				\draw (0.6,10.48) -- (0.6,11.5);
				\draw (-0.6,10.48) -- (-0.6,11.5);
				
				\draw (-1.1,7.85) -- (-1.1,11.5);
				\draw (-1.3,7.7)  -- (-1.3, 11.5);
				
				\draw (0.6,9.5) -- (0.6,8.37); 
				
				\draw (-0.6,9.5) -- (-0.6,7.87);
				\draw (-0.6, 6.37) -- (-0.6, 7.13);
				
				\node at (0,10) {$3$};
				\node at (0.8,8) {$1$};
				\node at (-0.8,7.5) {$2$};
				\node at (-0.8,6) {$1$};
				
				
				\foreach \coord in {(0,11), (0.3, 11), (-0.3, 11), (0.6, 11), (-0.6, 11), (0.6,10), (1.1, 8), (-0.5, 6), (0.6,9), (-0.6, 9), (-1.1, 11), (-1.3, 11), (-0.5, 7.5), (-0.6, 6.75)}{
					\fill[coralpink] \coord circle[radius=2pt];
				}
				
				\node at (0,4.5) {$d=7$};
			\end{tikzpicture}
		\end{subfigure}
		\begin{subfigure}{0.2\textwidth}
			\begin{tikzpicture}
				\draw (0,10) ellipse (1 and 0.6);
				\draw (0.8,8) ellipse (0.6 and 0.4);
				\draw (-0.8,7.5) ellipse (0.8 and 0.6);
				\draw (-0.8,6) ellipse (0.6 and 0.4);
				\draw (-0.8, 4.5) ellipse (0.6 and 0.4);
				
				\draw (0,10.6) -- (0,11.5);
				\draw (0.3,10.55) -- (0.3,11.5);
				\draw (-0.3,10.55) -- (-0.3,11.5);
				\draw (0.6,10.48) -- (0.6,11.5);
				\draw (-0.6,10.48) -- (-0.6,11.5);
				
				\draw (-1.1,8.05) -- (-1.1,11.5);
				\draw (-1.3,7.95)  -- (-1.3, 11.5);
				\draw (-1.5, 7.8) -- (-1.5, 11.5);
				
				\draw (0.6,9.5) -- (0.6,8.37); 
				
				\draw (-0.6,9.5) -- (-0.6,8.08);
				\draw (-0.6,6.37) -- (-0.6, 6.91);
				\draw (-0.6,5.63) -- (-0.6,4.87);
				
				\node at (0,10) {$3$};
				\node at (0.8,8) {$1$};
				\node at (-0.8,7.5) {$2$};
				\node at (-0.8,6) {$1$};
				\node at (-0.8, 4.5) {$1$};
				
				
				\foreach \coord in {(0,11), (0.3, 11), (-0.3, 11), (0.6, 11), (-0.6, 11), (0.6,10), (1.1, 8), (-0.5, 6), (0.6,9), (-0.6, 9), (-1.1, 11), (-1.3, 11), (-0.5, 7.5), (-0.6, 5.3), (-1.5, 11), (-0.5, 4.5)}{
					\fill[coralpink] \coord circle[radius=2pt];
				}
				\node at (-0.9, 6.65) {$2$};
				\node at (0.7,4.5) {$d=8$};
			\end{tikzpicture}
		\end{subfigure}
		\begin{subfigure}{0.18 \textwidth}
			\begin{tikzpicture}
				\draw (0,10) ellipse (1 and 0.6);
				\draw (0.8,8) ellipse (0.6 and 0.4);
				\draw (-0.8,7.5) ellipse (0.8 and 0.6);
				\draw (-0.6,6) ellipse (0.6 and 0.4);
				\draw (-1, 4.5) ellipse (0.6 and 0.4);
				
				\draw (0,10.6) -- (0,11.5);
				\draw (0.3,10.55) -- (0.3,11.5);
				\draw (-0.3,10.55) -- (-0.3,11.5);
				\draw (0.6,10.48) -- (0.6,11.5);
				\draw (-0.6,10.48) -- (-0.6,11.5);
				
				\draw (-1.1,8.05) -- (-1.1,11.5);
				\draw (-1.24,7.98)  -- (-1.24, 11.5);
				\draw (-1.36,7.94)  -- (-1.36, 11.5);
				\draw (-1.5, 7.8) -- (-1.5, 11.5);
				
				\draw (0.6,9.5) -- (0.6,8.37); 
				
				\draw (-0.6,9.5) -- (-0.6,8.08);
				\draw (-0.6,6.37) -- (-0.6, 6.91);
				
				\draw (-1.3,7.05) -- (-1.3,4.85);
				
				\node at (0,10) {$3$};
				\node at (0.8,8) {$1$};
				\node at (-0.8,7.5) {$3$};
				\node at (-0.6,6) {$1$};
				\node at (-1, 4.5) {$1$};
				
				
				\foreach \coord in {(0,11), (0.3, 11), (-0.3, 11), (0.6, 11), (-0.6, 11), (0.6,10), (1.1, 8), (-0.3, 6), (0.6,9), (-0.6, 9), (-1.1, 11), (-1.24, 11), (-1.36, 11), (-0.5, 7.5), (-1.5, 11), (-0.7, 4.5), (-1.3, 6.6), (-0.6, 6.6)}{
					\fill[coralpink] \coord circle[radius=2pt];
				}

				\node at (0.7,4.5) {$d=9$};
			\end{tikzpicture}
		\end{subfigure}
		\caption{Marked floor diagrams for all point conditions. The procedure can be repeated for each degree $3$ floor that appears. By induction, we obtain marked floor diagrams for $d \geq 5$.}
		\label{fig:pointconditions}
	\end{figure}

	\begin{lemma}
		If condition $(\star)$ holds for $(a,b)$, with $b>0$, then it holds for $(a+2,b-1)$. 
	\end{lemma}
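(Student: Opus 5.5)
Given a marked floor diagram $(\mathcal{D},m)$ of positive multiplicity for $(a,b)$ that witnesses $(\star)$, I will build one for $(a+2,b-1)$ witnessing $(\star)$ with the same underlying oriented tree. The degree is unchanged because $(a+2)+2(b-1)=a+2b$. Keeping $\mathcal{D}$ also keeps the floor of divergence $3$ or the weight $2$ elevator, so the only work is to modify the marking $m$. Concretely, I will choose one point condition and replace it by two line conditions placed so that each floor still receives exactly $3d_f-1$ induced point conditions, as in the definition of $\mu(\mathcal{D},m)$. Then $\mu$ is still a product of nonzero factors and hence positive.

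The replacement has two cases, according to where the chosen point condition sits.
\begin{enumerate}
\item \emph{The point condition lies on an elevator $e$.} Replace it by two line conditions on the same elevator. Rule (3) for marked floor diagrams allows at most two line conditions on an edge, and the induced-condition rules say two line conditions on an elevator impose one point condition on the adjacent floor, exactly as one point condition does. The counts $3d_f-1$ are therefore unchanged, and $\mu$ changes only by the factor $w(e)^{1+|m^{-1}(e)|}$, which remains positive.
\item \emph{The point condition is the horizontal condition of a floor $f$.} Make it a horizontal line condition and add a vertical line condition on the same floor. Rule (1) says both impose point conditions on $h_f$. We lose one point condition and gain two, so the count would overshoot by one. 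I will fix this by using the freedom in rules (4)--(5): an existing vertical or elevator line condition that imposed a point on $f$ is rearranged so it no longer does. Alternatively, I will first apply the ordering rule (5) so that $f$ is the top floor, where a point horizontal condition can be traded without affecting other floors.
\end{enumerate}

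To avoid the second case when possible, I will first show that whenever $b>0$ some point condition can be moved onto an elevator without changing the induced counts. If every point condition is the horizontal condition of its floor, then the floor with the highest horizontal condition has unbounded upward elevators, since divergence is positive. A point condition on an unbounded elevator adjacent to $f$ imposes a point on $f$, just as the horizontal one does. So we can transfer the horizontal role to one of $f$'s line conditions, if $f$ has one, and put the point on the unbounded elevator. This reduces to the first case.

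\textbf{Main obstacle.} The delicate step is the remaining subcase: a floor whose only conditions are points and which has no free unbounded elevator. This occurs only in small degree, and there I would simply check by hand, as the paper does for $d=3,4$. I also need to confirm that the ordering rule (5) still holds after the horizontal role changes. Since the vertical positions of the conditions can be chosen freely, I expect this to be harmless.
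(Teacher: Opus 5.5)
You have the paper's strategy: keep the oriented tree, so the divergence-$3$ floor or weight-$2$ elevator survives, and trade one point condition for two line conditions at the same place. The paper does exactly this with the highest point condition and gives no further argument. Your case (1) is correct.

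\textbf{The gap.} Case (2), and your reduction to case (1), both rest on a misreading of the induced-condition rules. On a floor, only point conditions and \emph{vertical} line conditions impose point conditions on $h_f$. A horizontal line condition imposes none: it fixes the height of the floor and is counted in $\ell_f$. The $(a,b)=(8,4)$ diagram in Figure~\ref{fig:floordiagram} confirms this. Every floor there has a horizontal line, and the counts $3d_f-1$ come out right only if those lines contribute nothing.

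The consequences are as follows.
\begin{enumerate}
\item In case (2) there is no overshoot. Your proposed repair, making another condition stop imposing a point on $f$, would leave $f$ with $3d_f-2$ conditions and multiplicity zero.
\item Your reduction fails for the same reason. Promoting a vertical line of $f$ to horizontal and moving the point onto an unbounded elevator changes the count on $f$ by $(0+1)-(1+1)=-1$.
\item The remaining ``main obstacle'' is only asserted to occur in low degree and is left to a hand check. As written, the argument is incomplete.
\end{enumerate}

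\textbf{The fix.} Do the naive replacement in case (2) as well: the horizontal point becomes a horizontal line plus a vertical line on the same floor. This changes the induced count on $h_f$ by $-1+0+1=0$ and adds one to $\ell_f$, so $\mu(f)$ is multiplied by $d_f>0$. In case (1), $\mu$ is multiplied by $w(e)$.

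Every floor keeps the same number of induced conditions. So whether a floor is ``fixed'' does not change, and the rule for a single line on an elevator gives the same answers as before. Hence $\mu(\mathcal{D},m)$ stays positive. With this, any point condition can be chosen and no case needs a hand check.
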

	
	\begin{proof}
		Given a marked floor diagram with $a$ line and $b$ point conditions, with $b$ positive, take the point condition which is highest in the ordering and replace it with two lines. This gives the required marked floor diagram for $a+2$ lines and $b-1$ point conditions. 
	\end{proof}
	
	\begin{corollary}
		Condition $(\star)$ holds for $d \geq 5$. 
	\end{corollary}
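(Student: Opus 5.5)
The plan is to combine the two reductions just established: the $d=5$, $a=0$ base case of Figure \ref{fig:degree5}, and the preceding lemma, which says that $(\star)$ for $(a,b)$ with $b>0$ implies $(\star)$ for $(a+2,b-1)$. With these in hand, the corollary only needs an induction on $d$ starting from the all-point case.

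\textbf{Step 1: all-point case.} I will show that $(\star)$ holds for $(a,b)=(0,2d)$ for every $d\geq 5$, by induction on $d$ following Figure \ref{fig:pointconditions}. The base case is Figure \ref{fig:degree5}, which has a floor of divergence $3$ and positive multiplicity. For the inductive step, I take a marked floor diagram of degree $d$ with $2d$ point conditions that contains a divergence $3$ floor or a weight $2$ elevator. I then modify it locally to get degree $d+1$ with two more point conditions.

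There are two local moves. The first inserts a divergence $1$ floor below an existing floor, connects it by an elevator carrying one new point condition, and adds one new infinite end at the top with a point condition. The second raises the divergence of a divergence $1$ floor to $2$ or $3$, again adding infinite ends with point conditions. Each move keeps a divergence $3$ floor in the diagram. To see that the multiplicity stays positive, I check that every floor still receives exactly $3d_f-1$ point conditions under the rules of Section \ref{sec:floor}. Every factor of $\mu$ is then a positive product of $N_{d_f}$ and powers of weights. The count works out because each move adds one to the degree and two to the number of points. Since the figure shows the moves can be repeated from each new degree $3$ floor, this yields $(\star)$ for $(0,2d)$ for all $d\geq5$.

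\textbf{Step 2: transfer to all $(a,b)$.} For fixed $d\geq5$, every admissible pair has the form $(2k,2d-k)$ with $0\le k\le 2d$, since $a+2b=4d$ forces $a$ to be even. Starting from $(0,2d)$, I apply the preceding lemma $k$ times; each application is valid because $b=2d-k+1>0$ at every step. This reaches every pair of degree $d$.

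\textbf{Main obstacle.} The delicate point is the positivity check in Step 1, namely that the point-condition count per floor is exactly $3d_f-1$ after each move. The one-line-condition rule (4) does not intervene here, because Step 1 uses only point conditions. So the check reduces to bookkeeping of point conditions on floors and on adjacent elevators.
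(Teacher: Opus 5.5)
Your proposal follows the paper's argument: the $d=5$, $a=0$ diagram of Figure~\ref{fig:degree5}, the degree-raising construction of Figure~\ref{fig:pointconditions} for all-point conditions, and then the $(a,b)\mapsto(a+2,b-1)$ lemma applied $k$ times to reach $(2k,2d-k)$. One caution: your first move does not balance as stated, because in an all-point diagram of positive multiplicity every end is marked and the number of floors plus marked bounded elevators equals $d$, so adding both a floor and a new marked elevator in one step overshoots by one; the count $3d_f-1$ only works as drawn in the figure, where the inserted floor carries its own point and the elevator above it becomes an unmarked weight-$2$ elevator.
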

	
	Finally, we want to show that when condition $(\star)$ doesn't hold, the contribution $E_{1,2}$ is $0$. Note that a genus $1$ planar curve with genus not at vertices must have degree at least $3$. So if there are no divergence $3$ vertices in the marked floor diagram, there cannot be an open cycle in a floor. It remains to check for cycles that go across floors. This means the underlying graph of $\mathcal{D}$ must have genus 1, which immediately eliminates degree $1$ and $2$.

	We must check what happens in degree $3$. For degree $3$, the relevant floor diagrams are shown in Figure \ref{fig:acrossfloors}. They do not create extra contributions for $b \geq 3$. Indeed, point conditions must be horizontal conditions above any horizontal line conditions. Since the excess dimension is $1$, there can be at most one floor or infinite end without a horizontal condition (each such implies there is an extra dimension of deformation). This implies there are at least three point conditions adjacent to the top floor, which has divergence $1$, giving a contradiction. 
	
	\begin{figure}[h]
		\centering
		\begin{tikzpicture}[scale=0.6]
			\draw (0,10) ellipse (1 and 0.6);
			\draw (0,8) ellipse (1 and 0.6);
			\draw (0,6) ellipse (1 and 0.6);

			\draw (0,10.6) -- (0,11.5);
			\draw (0.6,10.5) -- (0.6,11.5);
			\draw (-0.6,10.5) -- (-0.6,11.5);

			\draw (0.2,9.4) -- (0.2,8.6); 
			\draw (-0.2,9.4) -- (-0.2,8.6);
			
			\draw (0,6.6) -- (0,7.4);
			
			\node at (0,10) {$1$};
			\node at (0,8) {$1$};
			\node at (0,6) {$1$};

			\draw (3,10) ellipse (1 and 0.6);
			\draw (3,8) ellipse (1 and 0.6);
			
			\draw (3,10.6) -- (3,11.5);
			\draw (3.6,10.5) -- (3.6,11.5);
			\draw (2.4,10.5) -- (2.4,11.5);
			
			\draw (3.2,9.4) -- (3.2,8.6); 
			\draw (2.8,9.4) -- (2.8,8.6);
			
			\node at (3,10) {$1$};
			\node at (3,8) {$2$};

		\end{tikzpicture}
		\caption{Degree $3$ floor diagrams with a cycle across floors.}
		\label{fig:acrossfloors}
	\end{figure}

	\subsection{Full example} We end with the full calculation for degree $d=4$ and $(a,b) = (2,7)$. 
	
	In genus 0, the relevant marked floor diagrams are given in Figure \ref{fig:(2,7)genus0contributions}, with multiplicity and number or possible orderings of the conditions shown. Without modification, this gives a total of 58, which is the genus $0$ Gromov--Witten invariant. For the $E_0$ contribution, there are two vertices of multiplicity 4 in two of the floor diagrams. This gives
	$$\frac{1}{(4!)^4}E_0 = 860.$$

	\begin{figure}[h]
		\centering
		\begin{subfigure}{0.18\textwidth}
			\begin{tikzpicture}
				\draw (0,10) ellipse (1 and 0.6);
				\draw (0,8) ellipse (1 and 0.6);
				\draw (0,6) ellipse (1 and 0.6);
				
				\draw (0.2,10.6) -- (0.2,11.5);
				\draw (-0.2,10.6) -- (-0.2,11.5);
				\draw (0.6,10.5) -- (0.6,11.5);
				\draw (-0.6,10.5) -- (-0.6,11.5);
				
				\draw (0,9.4) -- (0,8.6); 
				
				\draw (0,6.6) -- (0,7.4);
				
				\node at (0,10) {$2$};
				\node at (0,8) {$1$};
				\node at (0,6) {$1$};
				
				\node at (-0.3, 9) {$2$};
				
				
				\foreach \coord in {(0.2, 11), (-0.2, 11), (0.6, 11), (-0.6, 11), (0.6, 8), (0, 7), (0.6,10)}{
					\fill[coralpink] \coord circle[radius=2pt];
				}
				\draw [forestgreen] plot [only marks, mark=square*] coordinates {(0.7, 6)};
				\draw [forestgreen] plot [only marks, mark=x] coordinates {(0.4, 6)};

				\node at (0,4.5) {$\mu = 4, n = 1$};
			\end{tikzpicture}
		\end{subfigure}
		\begin{subfigure}{0.18\textwidth}
			\begin{tikzpicture}
				\draw (0,10) ellipse (1 and 0.6);
				\draw (0,8) ellipse (1 and 0.6);
				\draw (0,6) ellipse (1 and 0.6);
				
				\draw (0.2,10.6) -- (0.2,11.5);
				\draw (-0.2,10.6) -- (-0.2,11.5);
				\draw (0.6,10.5) -- (0.6,11.5);
				\draw (-0.6,10.5) -- (-0.6,11.5);
				
				\draw (0,9.4) -- (0,8.6); 
				
				\draw (0,6.6) -- (0,7.4);
				
				\node at (0,10) {$2$};
				\node at (0,8) {$1$};
				\node at (0,6) {$1$};
				
				\node at (-0.3, 9) {$2$};
				
				
				\foreach \coord in {(0.2, 11), (-0.2, 11), (0.6, 11), (-0.6, 11), (0.8, 8), (0.8, 6), (0.6,10)}{
					\fill[coralpink] \coord circle[radius=2pt];
				}
				\draw [forestgreen] plot [only marks, mark=x] coordinates {(0.4, 8) (0.4, 6)};

				\node at (0,4.5) {$\mu = 2, n =  2$};
			\end{tikzpicture}
		\end{subfigure}
		\begin{subfigure}{0.18\textwidth}
			\begin{tikzpicture}
				\draw (0,10) ellipse (1 and 0.6);
				\draw (0.8,8) ellipse (0.6 and 0.4);
				\draw (-0.8,6) ellipse (0.6 and 0.4);
				
				\draw (0.2,10.6) -- (0.2,11.5);
				\draw (-0.2,10.6) -- (-0.2,11.5);
				\draw (0.6,10.5) -- (0.6,11.5);
				\draw (-0.6,10.5) -- (-0.6,11.5);
				
				\draw (0.6,9.5) -- (0.6,8.37); 
				
				\draw (-0.6,9.5) -- (-0.6,6.37);
				
				\node at (0,10) {$2$};
				\node at (0.8,8) {$1$};
				\node at (-0.8,6) {$1$};
				
				
				\foreach \coord in {(0.2, 11), (-0.2, 11), (0.6, 11), (-0.6, 11), (0.6,10), (1.1, 8), (-0.5, 6)}{
					\fill[coralpink] \coord circle[radius=2pt];
				}
				\draw [forestgreen] plot [only marks, mark=x] coordinates {(0.5, 8) (-1.1, 6)};

				\node at (0,4.5) {$\mu = 4, n = 2$};
			\end{tikzpicture}
		\end{subfigure}
		\begin{subfigure}{0.18\textwidth}
		\begin{tikzpicture}
			\draw (0,10) ellipse (0.8 and 0.5);
			\draw (0.8,8) ellipse (0.6 and 0.4);
			
			\draw (0,10.5) -- (0,11.5);
			\draw (0.6,10.33) -- (0.6,11.5);
			\draw (-0.6,10.33) -- (-0.6,11.5);
			
			\draw (0.6,9.65) -- (0.6,8.37); 
			
			\draw (1, 8.37)  -- (1,11.5);

			\node at (0,10) {$2$};
			\node at (0.8,8) {$2$};
			
			
			\foreach \coord in {(0, 11), (0.6, 11), (-0.6, 11), (0.5,10), (1.1, 8), (1,11), (0.6, 9)}{
				\fill[coralpink] \coord circle[radius=2pt];
			}
			\draw [forestgreen] plot [only marks, mark=x] coordinates {(0.4, 8) (0.6,8)};

			\node at (0,4.5) {$\mu = 4, n = 8$};
		\end{tikzpicture}
		\end{subfigure}
		\begin{subfigure}{0.18 \textwidth}
			\begin{tikzpicture}
				\draw (0,10) ellipse (0.8 and 0.5);
				\draw (0.8,8) ellipse (0.6 and 0.4);
				\draw (0.8,6) ellipse (0.6 and 0.4);
				
				\draw (0,10.5) -- (0,11.5);
				\draw (0.6,10.33) -- (0.6,11.5);
				\draw (-0.6,10.33) -- (-0.6,11.5);
				
				\draw (0.6,9.65) -- (0.6,8.37); 
				
				\draw (1, 8.37)  -- (1,11.5);
				
				\draw (0.8, 7.6)  -- (0.8, 6.4);

				\node at (0,10) {$2$};
				\node at (0.8,8) {$1$};
				\node at (0.8,6) {$1$};
				
				
				\foreach \coord in {(0, 11), (0.6, 11), (-0.6, 11), (0.5,10), (1.1, 8), (1,11), (1.1, 6)}{
					\fill[coralpink] \coord circle[radius=2pt];
				}
				\draw [forestgreen] plot [only marks, mark=x] coordinates {(-0.5, 10) (0.5,6)};

				\node at (0,4.5) {$\mu = 2, n = 5$};
			\end{tikzpicture}
		\end{subfigure}
		\caption{Marked floor diagrams of genus $0$ for $(a,b) = (2,7)$ with non-zero multiplicity. The number $n$ is the number of possible orderings of the markings. The multiplicity given is the genus $0$ multiplicity. In the $E_0$ count, the two leftmost multiplicities are modified.}
		\label{fig:(2,7)genus0contributions}
	\end{figure}

	There are no $E_{1,1}$ contributions. Indeed, consider the leftmost graph of Figure \ref{fig:(2,7)genus0contributions}. We can put a cycle on the weight 2 edge, but by Remark \ref{rem:empty}, the corresponding moduli is empty and there is no contribution to the well-spaced count.

	For tropical curves with an open cycle, the relevant floor diagrams are given in Figure \ref{fig:(2,7)genus1contributions}. All contributions are non-superabundant. The cycle multiplicity is $1$, and this gives
	$$\frac{1}{(4!)^4}W_{2,7} = 2 + 2 = 4,$$
	as expected. 
	
	\begin{figure}[h]
		\centering
		\begin{subfigure}{0.18\textwidth}
			\begin{tikzpicture}
				\draw (0,10) ellipse (1 and 0.6);
				\draw (0,8) ellipse (1 and 0.6);
				\draw (0,6) ellipse (1 and 0.6);
				
				\draw (0.2,10.6) -- (0.2,11.5);
				\draw (-0.2,10.6) -- (-0.2,11.5);
				\draw (0.6,10.5) -- (0.6,11.5);
				\draw (-0.6,10.5) -- (-0.6,11.5);
				
				\draw (-0.2,9.4) -- (-0.2,8.6); 
				\draw (0.2, 9.4) -- (0.2, 8.6);
				
				\draw (0,6.6) -- (0,7.4);
				
				\node at (0,10) {$2$};
				\node at (0,8) {$1$};
				\node at (0,6) {$1$};

				
				\foreach \coord in {(0.2, 11), (-0.2, 11), (0.6, 11), (-0.6, 11), (0.6, 8), (0, 7), (0.6,10)}{
					\fill[coralpink] \coord circle[radius=2pt];
				}
				\draw [forestgreen] plot [only marks, mark=square*] coordinates {(0.7, 6)};
				\draw [forestgreen] plot [only marks, mark=x] coordinates {(0.4, 6)};

				\node at (0,4.5) {$\mu = 2, n = 1$};
			\end{tikzpicture}
		\end{subfigure}
		\begin{subfigure}{0.18\textwidth}
			\begin{tikzpicture}
				\draw (0,10) ellipse (1 and 0.6);
				\draw (0,8) ellipse (1 and 0.6);
				\draw (0,6) ellipse (1 and 0.6);
				
				\draw (0.2,10.6) -- (0.2,11.5);
				\draw (-0.2,10.6) -- (-0.2,11.5);
				\draw (0.6,10.5) -- (0.6,11.5);
				\draw (-0.6,10.5) -- (-0.6,11.5);
				
				\draw (-0.2,9.4) -- (-0.2,8.6); 
				\draw (0.2, 9.4) -- (0.2, 8.6);
				
				\draw (0,6.6) -- (0,7.4);
				
				\node at (0,10) {$2$};
				\node at (0,8) {$1$};
				\node at (0,6) {$1$};

				
				\foreach \coord in {(0.2, 11), (-0.2, 11), (0.6, 11), (-0.6, 11), (0.8, 8), (0.8, 6), (0.6,10)}{
					\fill[coralpink] \coord circle[radius=2pt];
				}
				\draw [forestgreen] plot [only marks, mark=x] coordinates {(0.4, 8) (0.4, 6)};

				\node at (0,4.5) {$\mu = 1, n =  2$};
			\end{tikzpicture}
		\end{subfigure}

		\caption{Marked floor diagrams of genus $1$ for $(a,b) = (2,7)$ with non-zero multiplicity. .}
		\label{fig:(2,7)genus1contributions}
	\end{figure}
	
	We have 
	$$L_{2,7} = W_{2,7} - \frac{1}{24}E_0 = (4!)^4\frac{-191}{6}.$$

	\bibliographystyle{siam} 
	\bibliography{bibliographySK} 

\end{document}